\theoremstyle{definition}
\theoremstyle{plain}
\newtheorem{theorem}{Theorem}[section]
\newtheorem{definition}{Definition}[section]
\newtheorem{remark}{Remark}[section]
\newtheorem{lemma}{Lemma}[section]
\newtheorem{proposition}{Proposition}[section]
\numberwithin{equation}{section}
\newcommand{\vs}{\vspace}
\begin{document}

\title{Orbital stability of normalized ground states for  critical Choquard equation with potential\footnote{  This work was partially supported by NNSFC (No. 12171493).}}

\author{ Jun Wang$^{a}$, Zhaoyang Yin$^{a, b}$\footnote {Corresponding author. wangj937@mail2.sysu.edu.cn (J. Wang), mcsyzy@mail.sysu.edu.cn (Z. Yin)
} \\
{\small $^{a}$Department of Mathematics, Sun Yat-sen University, Guangzhou, 510275, China } \\
{\small $^{b}$School of Science, Shenzhen Campus of Sun Yat-sen University, Shenzhen, 518107, China } \\
}

	\date{}

	\maketitle

\date{}

 \maketitle \vs{-.7cm}

  \begin{abstract}
In this paper, we
study the existence of ground state standing waves and orbital stability, of prescribed mass,  for the nonlinear critical Choquard equation
\begin{equation*}
  \left\{\begin{array}{l}
i \partial_t u+\Delta u -V(x)u+(I_{\alpha}\ast|u|^{q})|u|^{q-2}u+(I_{\alpha}\ast|u|^{2_{\alpha}^*})|u|^{2_{\alpha}^*-2}u=0,\ (x, t) \in \mathbb{R}^d \times \mathbb{R}, \\
\left.u\right|_{t=0}=\varphi \in  H ^1(\mathbb{R}^d),
\end{array}\right.
\end{equation*}
where $I_{\alpha}$ is a Riesz potential of order $\alpha\in(0,d),\ d\geq3,\  2_{\alpha}^*=\frac{2d-\alpha}{d-2}$ is the upper critical exponent due to Hardy-Littlewood-Sobolev inequality, $\frac{2d-\alpha}{d}<q<\frac{2d-\alpha+2}{d}$. Under appropriate potential conditions, we obtain new Strichartz estimates and construct the new space to get orbital stability of normalized ground state. To our best knowledge, this is the first orbital stability result for this model. Our method is also applicable to other mixed nonlinear equations with potential.
 \end{abstract}

{\footnotesize {\bf   Keywords:} Choquard equation, Normalized solution, Ground state, Orbital stability

{\bf 2010 MSC:}  35B35, 35B09, 35B38, 35Q55.
}

\section{ Introduction and main results}
In this paper, we consider the following nonlinear critical Choquard equation
\begin{equation}\label{eq1.1}
  i \partial_t u+\Delta u -V(x)u+(I_{\alpha}\ast|u|^{q})|u|^{q-2}u+(I_{\alpha}\ast|u|^{2_{\alpha}^*})|u|^{2_{\alpha}^*-2}u=0,\ (x, t) \in \mathbb{R}^d \times \mathbb{R},
\end{equation}
where $\alpha\in(0,d),\ \frac{2d-\alpha}{d}<q<\frac{2d-\alpha+2}{d},\ 2_{\alpha}^*=\frac{2d-\alpha}{d-2}$. $I_\alpha$ is the Riesz potential defined by
$$
I_\alpha(x)=\frac{\Gamma\left(\frac{d-\alpha}{2}\right)}{\Gamma\left(\frac{\alpha}{2}\right) \pi^{\frac{d}{2}} 2^\alpha|x|^{d-\alpha}} \triangleq \frac{\bar{C}}{|x|^{d-\alpha}}
$$
and $\Gamma$ is the Gamma function. We recall that standing waves to \eqref{eq1.1} are solutions of the form $v(x,t)=e^{-i \lambda t}u(x),\ \lambda\in\mathbb{R}$. Then the function $u(x)$ satisfies the equation
\begin{equation}\label{eq1.2}
  -\Delta u+V(x)u-\lambda u=(I_{\alpha}\ast|u|^{q})|u|^{q-2}u+(I_{\alpha}\ast|u|^{2_{\alpha}^*})|u|^{2_{\alpha}^*-2}u,\ x \in \mathbb{R}^d .
\end{equation}
One can search for solutions to \eqref{eq1.2} having a prescribed $L^2$-norm. Defining on $H^1(\mathbb{R}^d, \mathbb{R})$ the energy functional
\begin{equation*}
  I(u)=\frac{1}{2}\|\nabla u\|_2^2+\frac{1}{2}\int_{\mathbb{R}^d}V(x)u^2dx -\frac{1}{2 q}\int_{\mathbb{R}^d}(I_{\alpha}\ast|u|^{q})|u|^{q}dx-\frac{1}{2\cdot2_\alpha^*}\int_{\mathbb{R}^d}(I_{\alpha}\ast|u|^{2_{\alpha}^*})|u|^{2_{\alpha}^*}dx,
\end{equation*}
it is standard to check that $I(u)$ is of class $C^1$ and that a critical point of $I$ restricted to the (mass) constraint
\begin{equation*}
  S(a) := \{u\in H^1(\mathbb{R}^d, \mathbb{R}) : \|u\|_2^2 = a\}
\end{equation*}
gives rise to a solution to \eqref{eq1.2}, satisfying $\|u\|_2^2 = a$.

It is widely known that Fr\"ohlich \cite{HF1937} and Pekar \cite{SI1954}  firstly introduced the Choquard equation  for the modeling of quantum polaron
\begin{equation*}
  -\Delta u+u=\left(\frac{1}{|x|}*|u|^2\right)u \text{  in } \mathbb{R}^3.
\end{equation*}
As pointed out by Fr\"ohlich and Pekar, this model corresponds to the study of free
electrons in an ionic lattice interacting with phonons associated to deformations of
the lattice or with the polarization that it creates on the medium.

If potential $V(x)$ in \eqref{eq1.1} is constant, we call \eqref{eq1.1} is autonomous. In this case, Jeanjean in \cite{LJJ1997} developed an approach based on the
Pohozaev identity which has been used successfully in recent years. The key to this method is to find a bounded Palais-Smale sequences by using the transformation $s*u(x) = e^{\frac{sd}{2}}u(e^sx)$. After that, by weakening the conditions in \cite{LJJ1997}, Jeanjean \cite{LJSL2020} and Bieganowski \cite{BBJM2021} improved these results. Of course, these articles only consider the problem of a single nonlinear term. Recently, there have been many studies on mixed nonlinear terms. For example, Soave \cite{{NSJDE2020},{NSJFA2020}} studied normalized solution of \eqref{eq1.2}
with mixed nonlinearity $f(|u|)u = \mu|u|^{q-2}u + |u|^{p-2}u$, $2 < p < 2 + \frac{4}{d} < q \leq2^*=\frac{2d}{d-2}$. Specifically, Soave in \cite{NSJDE2020} obtained many results of existence and non-existence. More precisely, if $2<q<p=2+\frac{4}{d}$, that is, the
leading nonlinearity is $L^2$-critical and a $L^2$-subcritical lower order term. \eqref{eq1.2} had a real-valued positive and radially symmetric solution for some $\lambda<0$ in $\mathbb{R}^d$ provided $\mu>0$ and $\Theta>0$ small enough. Moreover, if $\mu<0$, \eqref{eq1.2} had no solution. If $2+\frac{4}{d}=q<p<2^*$, that is, the
leading term is $L^2$-critical and $L^2$-supercritical, \eqref{eq1.2} had a real-valued positive, radially symmetric solution for some $\lambda<0$ in $\mathbb{R}^d$ provided $\mu>0$ and $\mu, \Theta$ satisfy the appropriate conditions. If $2<q< 2+\frac{4}{d}<p<2^*$, that is, the
leading term $L^2$-subcritical and $L^2$-supercritical, \eqref{eq1.2} also had a real-valued positive and radially symmetric solution for some $\lambda<0$ in $\mathbb{R}^d$ provided $\Theta>0, \mu<0$ and $\mu, \Theta$ satisfy the appropriate conditions. Soave in \cite{NSJFA2020} considered the Sobolev critical case and obtained some similar results. In particular, the Sobolev critical case also has been considered in \cite{{TAK2012},{TAK2013},{RKTO2017},{CMGX2013}}(see also the references therein). It is worth mentioning that many researchers are also interested in the existence of normalized multiple solutions. In \cite{LJTL2022}, Jeanjean et al. obtained the existence of normalized multiple solutions for Sobolev critical case in \eqref{eq1.2}. For more results on this aspect, please refer to \cite{{JWYW2022},{TBNS2017},{TBNS2019},{DBJC2019},{JBLT2013},{TBAQ2023}} and its references.

If \eqref{eq1.2} is non-autonomous, there is little research on normalized solutions for Choquard equation with potential. In this paper, we are interested in existence of ground state solutions and orbital stability. Regarding stability research, we mention the guiding work of Jeanjean \cite{LJJJ2022} here. Jeanjean et al. in \cite{LJJJ2022} first derive an upper bound on the propagator $e^{it\Delta}$ which provides a kind of uniform local existence result, using the information that all minimizing sequences to show  the orbital stability of ground state solutions. Our idea is to generalize its results to critical Choquard equation with potential. However, there are two fundamental difficulties. On the one hand, due to the emergence of the potential function, the original operator $\Delta$ becomes $\Delta-V$, so the corresponding Strichartz may not  hold true. On the other hand, we need to construct a new space to obtain local solutions since the emergence of the $V(x)$. To overcome the first difficulty, we need to obtain new Strichartz estimates by establishing new dispersion estimates and dual methods in the case of Kato potential.
To overcome the second difficulty, we use a new equivalent norm to construct space and distance, see Lemma \ref{L3.3}. Furthermore, dealing with Choquard nonlinear terms is not easy, and we need to establish new estimates based on fundamental inequality, such as Proposition \ref{P3.3}.

Now, we give the definition of ground state solutions and orbital stability.
\begin{definition}\label{D1.1}
We say that $u_a\in S(a)$ is a ground state solution to \eqref{eq1.2} if it is a solution having minimal
energy among all the solutions which belong to $S(a)$. Namely, if
\begin{equation*}
  I(u_a)=\inf\{I(u),\ u\in S(a),\ (I\left.\right|_{S(a)}'(u))=0\}.
\end{equation*}
\end{definition}
\begin{definition}\label{D1.2}
$Z\subset \mathcal{H}$ is stable if: $Z\neq\emptyset$ and for any $v\in Z$ and any $\varepsilon>0$, there exists a $\delta>0$ such that if $\varphi\in\mathcal{H}$ satisfies $\|\varphi-v\|_{\mathcal{H}}<\delta$ then $u_{\varphi}$ is globally defined and $\inf\limits_{z\in Z}\|u_{\varphi}(t)-z\|_{\mathcal{H}}<\varepsilon$ for all $t\in\mathbb{R}$, where $u_{\varphi}(t)$ is the solution to \eqref{eq1.1} corresponding to the initial data in $\mathcal{H}$.
\end{definition}
\begin{theorem}\label{t1.1}
Let $d\geq3,\ \frac{2d-\alpha}{d}<q<\frac{2d-\alpha+2}{d}$,\ $\left\|V_{-}\right\|_{\frac{d}{2}}\leq\mathcal{S}$. There exists a $a_0>0$ such that, for any $a\in(0,a_0)$, $I(u)$ restricted to $S(a)$ has a ground state. This ground state is a (local) minimizer of $I(u)$ in the set
$\mathbf{V}(a)$. In addition, if $(u_n)\subset\mathbf{V} (a)$ is such that $I(u_n)\rightarrow m(a)$, then, up to translation, $u_n \rightarrow u \in \mathcal{M}_a$ in $H^1(\mathbb{R}^d, \mathbb{R})$.
\end{theorem}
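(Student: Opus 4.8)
The plan is to realize the ground state as a constrained local minimizer of $I$ on $S(a)$ and to obtain it, together with the compactness statement, from a concentration--compactness analysis of minimizing sequences confined to the region $\mathbf{V}(a)$.

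First I would check that $I$ is bounded below and coercive on $\mathbf{V}(a)$ and that $m(a):=\inf_{\mathbf{V}(a)}I<0$ for all small $a>0$. The three analytic ingredients are: a Gagliardo--Nirenberg-type estimate for the lower-order term, which since $\frac{2d-\alpha}{d}<q<\frac{2d-\alpha+2}{d}$ is $L^2$-subcritical gives $\int_{\R^d}(I_\alpha\ast|u|^q)|u|^q\,dx\le C\,a^{\theta}\|\nabla u\|_2^{\beta}$ with $\beta<2$; the Hardy--Littlewood--Sobolev and Sobolev inequalities for the critical term, giving $\int_{\R^d}(I_\alpha\ast|u|^{2_\alpha^*})|u|^{2_\alpha^*}\,dx\le C\|\nabla u\|_2^{\sigma}$ with $\sigma=2\cdot 2_\alpha^*>2$; and the potential bound $\|V_-\|_{d/2}\le\mathcal{S}$, which via H\"older and Sobolev yields $\int_{\R^d}V_-u^2\,dx\le\mathcal{S}^{-1}\|V_-\|_{d/2}\|\nabla u\|_2^2$ and so keeps the quadratic part of $I$ nonnegative. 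Substituting these into $I$ bounds it below by a scalar function $g(\rho)=\tfrac12\rho^2-C_1a^{\theta}\rho^{\beta}-C_2\rho^{\sigma}$ of $\rho=\|\nabla u\|_2$; because $\beta<2<\sigma$, for small $a$ this $g$ has a strict negative local minimum separated from large $\rho$ by a positive barrier, and $\mathbf{V}(a)$ is precisely the well cut off by that barrier. This forces $m(a)<0$ and confines any minimizing sequence to the interior of $\mathbf{V}(a)$, away from $\partial\mathbf{V}(a)$.

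Next I would take $(u_n)\subset\mathbf{V}(a)$ with $I(u_n)\to m(a)$; coercivity gives $H^1$-boundedness. The core is excluding loss of mass. Vanishing is ruled out because it forces both nonlocal terms to $0$ and hence $\liminf I(u_n)\ge 0>m(a)$. Dichotomy is excluded by a strict subadditivity inequality $m(a)<m(a_1)+m_\infty(a-a_1)$ for $0<a_1<a$, where $m_\infty$ is the level of the limiting autonomous problem (with $V\equiv0$ at infinity); the translations in the statement enter here, since any escaping mass is governed by this translation-invariant problem. The confinement to $\mathbf{V}(a)$ keeps the critical Choquard term strictly below the energy threshold of the critical Hardy--Littlewood--Sobolev bubble, so no mass concentrates into a bubble. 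After a suitable translation, $u_n\rightharpoonup u$ and, using the Brezis--Lieb lemma for both nonlocal integrals together with the $L^2$-constraint, $u_n\to u$ strongly in $H^1$ with $\|u\|_2^2=a$ and $I(u)=m(a)$.

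Finally, since $u$ sits in the interior of $\mathbf{V}(a)$, the Lagrange multiplier rule on $S(a)$ produces $\lambda\in\R$ with $(I|_{S(a)})'(u)=0$, so $u$ solves \eqref{eq1.2} and, as the minimizer of $I$ over $\mathbf{V}(a)$, is a ground state in the sense of Definition \ref{D1.1} and a local minimizer of $I$ on $S(a)$; the compactness assertion is exactly the conclusion of the previous step. I expect the main obstacle to be the simultaneous loss of compactness from the two critical features: the Sobolev-critical Choquard nonlinearity, whose non-compact embedding permits bubbling and must be controlled by the quantitative barrier defining $\mathbf{V}(a)$, and the loss of translation invariance caused by $V(x)$, which makes the comparison with the problem at infinity $m_\infty$ unavoidable. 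Establishing the strict inequalities $m(a)<m_\infty(a)$ and their split version, using the negativity of $m(a)$ and the smallness of $a$, is the delicate step on which the whole compactness argument rests.
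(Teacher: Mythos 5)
Your opening analysis coincides with the paper's own setup (Lemmas \ref{L1.1}--\ref{L1.4}): the same Gagliardo--Nirenberg and Hardy--Littlewood--Sobolev/Sobolev bounds, the same use of $\|V_-\|_{d/2}\le\mathcal{S}$ to keep the quadratic part nonnegative, the same scalar barrier function of $\|\nabla u\|_2$, the same set $\mathbf{V}(a)$ cut off by the barrier, and $m(a)<0<\inf_{\partial\mathbf{V}(a)}I$. The compactness analysis, however, contains two genuine gaps. First, vanishing: Lions' lemma gives $u_n\to0$ in $L^p$ only for $2<p<2^*$, which kills the subcritical Choquard term (its exponent $\tfrac{2dq}{2d-\alpha}$ lies strictly between $2$ and $2^*$) but \emph{not} the critical one, which is comparable to $\|u_n\|_{2^*}^{2\cdot 2_\alpha^*}$ and survives vanishing; so your claim that vanishing ``forces both nonlocal terms to $0$'' is false as stated. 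The paper's Lemma \ref{L1.6} is exactly the missing repair: inside $\mathbf{B}_{\rho_0}$ the critical term is absorbed into the gradient term, with leftover coefficient $\beta_0>0$ computed from $f(a_0,\rho_0)=0$, and only then does one get $I(u_n)\ge o_n(1)$, contradicting $m(a)<0$ (Lemma \ref{L1.7}).

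Second, and more seriously, your exclusion of dichotomy rests on the strict inequality $m(a)<m(a_1)+m_\infty(a-a_1)$, which you yourself call the delicate step and never prove --- and under the hypotheses of Theorem \ref{t1.1} it cannot be proved. Nothing is assumed about the behaviour of $V$ at infinity, so the limiting functional defining $m_\infty$ is not even well defined; and in the natural cases where it is (say $0\le V\to 0$ at infinity, $V\not\equiv0$), one gets $m(a)=m_\infty(a)$ rather than a strict inequality, so the argument collapses. The paper's route is different and self-contained: Lemma \ref{L1.5}(ii) proves strict subadditivity of $m$ \emph{itself}, namely $m(a)<m(a-a_1)+m(a_1)$ whenever $m(a_1)$ is attained, by the elementary scaling $v=\sqrt{\theta}\,u$ (using $\theta^{q}>\theta$ and $\theta^{2_\alpha^*}>\theta$); then Lemma \ref{L1.8} combines non-vanishing, a Brezis--Lieb splitting along $u_n(\cdot-y_n)\rightharpoonup u_a$, this subadditivity (forcing $\|u_a\|_2^2=a$), and Lemma \ref{L1.6} (forcing $\|\nabla w_n\|_2\to0$). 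No problem at infinity appears anywhere. You are right to worry that the potential destroys translation invariance --- the splitting in Lemma \ref{L1.8} does treat $I$ as translation invariant, which is the delicate point of the paper's own argument --- but your proposed remedy replaces that step by one which is both unproved in your sketch and unavailable under the stated hypotheses.
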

\begin{remark}\label{R1.3}
$(1)$ There exists a ground state which is a real valued, positive, radially symmetric decreasing function.
In fact, if $u\in S(a)$ is a ground state then its Schwartz symmetrization is clearly also a ground state.

$(2)$ Under the assumption of Theorem \ref{t1.1}, it can be proved that, for any $a\in(0, a_0)$,
\begin{equation*}
  \mathcal{M}_a=\{e^{i\theta}u \text{ for some } \theta\in\mathbb{R}, u\in S(a)\cap H^1(\mathbb{R}^d,\mathbb{R}), I(u)=m(a)\}
\end{equation*}
by applying the argument of \cite[Remark 1.4]{{LJJJ2022}}.

$(3)$ Theorem \ref{t1.1} generalizes the conclusion in \cite{LJJJ2022}. In addition, this result also holds for biharmonic operators.

$(4)$ Using techniques in \cite{TBAQ2023}, we can obtain $\lambda<0$.
\end{remark}
\begin{theorem}\label{t1.2}
Let $d\geq3,\ \frac{2d-\alpha}{d}<q<\frac{2d-\alpha+2}{d},\ \left\|V_{-}\right\|_{\frac{d}{2}}\leq\mathcal{S}$ and $a_0>0$ be given in Theorem \ref{t1.1}. Assume $V$ satisfies \eqref{eq3.2} and \eqref{eq3.3}, then for any $a\in(0,a_0)$, the set $\mathcal{M}_a$ is compact, up to translation, and it is orbitally stable.
\end{theorem}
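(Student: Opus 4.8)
The plan is to follow the Cazenave--Lions variational scheme, adapted to the local-minimizer setting of \cite{LJJJ2022}, feeding the compactness of minimizing sequences supplied by Theorem \ref{t1.1} into a global well-posedness theory built on the new Strichartz estimates and the function space of Lemma \ref{L3.3}. I would first dispose of the compactness of $\mathcal{M}_a$: every element of $\mathcal{M}_a$ lies in $\mathbf{V}(a)$ and realizes $I=m(a)$, so any sequence drawn from $\mathcal{M}_a$ is itself a minimizing sequence for $m(a)$ over $\mathbf{V}(a)$, and the last assertion of Theorem \ref{t1.1} forces convergence, up to translation, to a limit in $\mathcal{M}_a$.

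The analytic core is global well-posedness in $\mathcal{H}$. Using the new Strichartz estimates for the propagator $e^{it(\Delta-V)}$ under the Kato-type hypotheses \eqref{eq3.2}--\eqref{eq3.3}, together with the nonlinear estimates of Proposition \ref{P3.3} for the two convolution terms, I would run a contraction argument in the complete metric space of Lemma \ref{L3.3}, obtaining local existence and uniqueness on a time interval whose length depends only on $\|\varphi\|_{\mathcal{H}}$. The blow-up alternative then reduces global existence to an a priori bound on $\|u(t)\|_{\mathcal{H}}$. Such a bound is provided, for data of mass near $a$ and energy near $m(a)$, by conservation of $\|u(t)\|_2^2$ and of $I(u(t))$, once one knows the trajectory stays inside $\mathbf{V}(a)$. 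Confinement to $\mathbf{V}(a)$ rests on the strict energy gap $\inf_{\partial\mathbf{V}(a)} I>m(a)$ built into the construction: a trajectory that starts strictly inside $\mathbf{V}(a)$ with energy close to $m(a)$ cannot reach $\partial\mathbf{V}(a)$ by continuity in $t$ and conservation of energy, hence remains in $\mathbf{V}(a)$ and is global.

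Orbital stability then follows by contradiction. If $\mathcal{M}_a$ were unstable, there would exist $\varepsilon_0>0$, data $\varphi_n\to v\in\mathcal{M}_a$ in $\mathcal{H}$, and times $t_n$ with $\inf_{z\in\mathcal{M}_a}\|u_{\varphi_n}(t_n)-z\|_{\mathcal{H}}\geq\varepsilon_0$. Continuity of the mass and of $I$ give $\|\varphi_n\|_2^2\to a$ and $I(\varphi_n)\to m(a)$, so by conservation $\|u_{\varphi_n}(t_n)\|_2^2\to a$ and $I(u_{\varphi_n}(t_n))\to m(a)$, while the confinement step keeps $u_{\varphi_n}(t_n)\in\mathbf{V}(a)$. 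Rescaling the mass, the functions $w_n:=(a/\|u_{\varphi_n}(t_n)\|_2^2)^{1/2}\,u_{\varphi_n}(t_n)$ lie in $S(a)\cap\mathbf{V}(a)$, satisfy $\|w_n-u_{\varphi_n}(t_n)\|_{\mathcal{H}}\to0$, and obey $I(w_n)\to m(a)$; thus $(w_n)$ is a minimizing sequence in $\mathbf{V}(a)$, and Theorem \ref{t1.1} yields $w_n\to\tilde u\in\mathcal{M}_a$ up to translation. This contradicts $\inf_{z}\|u_{\varphi_n}(t_n)-z\|_{\mathcal{H}}\geq\varepsilon_0$ and proves the claim.

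I expect the main obstacle to be the well-posedness step, not the variational one. Since the potential destroys the explicit dispersive decay of $e^{it\Delta}$, the classical Strichartz framework is unavailable, and the delicate point is to verify that the space of Lemma \ref{L3.3} and the bounds of Proposition \ref{P3.3} really close the contraction for the lower-order \emph{and} the Hardy--Littlewood--Sobolev-critical convolution nonlinearity simultaneously, with an existence time that is uniform over $\mathcal{H}$-balls. A secondary subtlety is the confinement to $\mathbf{V}(a)$, which is forced on us precisely because the ground state is only a local, not a global, minimizer of $I$ on $S(a)$.
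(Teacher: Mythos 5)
Your variational half is sound and essentially matches the paper: the compactness of $\mathcal{M}_a$ follows at once from the last assertion of Theorem \ref{t1.1}, and your contradiction argument for stability (conservation laws, confinement to $\mathbf{V}(a)$ via the energy gap $\inf_{\partial\mathbf{V}(a)}I>0>m(a)$, then compactness of minimizing sequences) is precisely the content of the paper's Lemma \ref{L3.8}. The genuine gap is in your well-posedness step, and it is exactly the point you flag as a ``delicate point'' and then assume away: for the Hardy--Littlewood--Sobolev-critical term $(I_{\alpha}\ast|u|^{2_{\alpha}^*})|u|^{2_{\alpha}^*-2}u$ there is \emph{no} local existence time depending only on $\|\varphi\|_{H^1}$, and correspondingly no blow-up alternative of the form ``$T_{\varphi}^{\max}<\infty \Rightarrow \|u(t)\|_{H^1}\to\infty$''. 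The critical nonlinearity scales exactly like $\dot H^1$, so the contraction estimates (Lemma \ref{L3.1}) produce no positive power of $T$; the fixed point closes only under the smallness hypothesis $\|e^{it\mathcal{L}}\varphi\|_{\mathbf{X}_T}\leq\rho_0$ of Lemma \ref{L3.4}. For a single datum this smallness holds for small $T$ by dominated convergence, but the rate depends on the profile of $\varphi$, not on $\|\varphi\|_{H^1}$, and it is \emph{not} uniform over $H^1$-balls. Hence your reduction ``a priori $H^1$ bound from conservation laws $\Rightarrow$ global existence'' does not go through: the trajectory could stay bounded in $H^1$ (indeed it stays in $\mathbf{B}_{\rho_0}$) and still fail to extend, for all your argument shows.

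This is precisely the obstruction the paper is built to circumvent, following \cite{LJJJ2022}. Instead of uniformity over $H^1$-balls, the paper proves (Lemmas \ref{L3.5}--\ref{L3.7}) that the Strichartz smallness $\|e^{it\mathcal{L}}\varphi\|_{\mathbf{X}_T}<\rho_0$ holds with a \emph{uniform} time $T_0$ for all data in an $\varepsilon_0$-neighborhood of a set $\mathcal{K}$ that is compact up to translation --- this is where compactness, rather than boundedness, is essential. Taking $\mathcal{K}=\mathcal{M}_a$, global existence is then obtained by iteration (Lemma \ref{L3.9}): Lemma \ref{L3.8} keeps $u_{\varphi}(t)$ within $\varepsilon_0$ of $\mathcal{M}_a$ on the whole existence interval, so at any time one can restart and gain the fixed additional time $T_0$, contradicting $T_{\varphi}^{\max}<\infty$. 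Note the logical order: near-confinement to $\mathcal{M}_a$ is proved first on $[0,T_{\varphi}^{\max})$, whatever $T_{\varphi}^{\max}$ is, and globality is a \emph{consequence} of stability-type control near the compact set, not a prerequisite established by an independent well-posedness theory. Your proposal inverts this order, and in the critical regime that inversion cannot be repaired.
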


In section 2, we first present some basic inequalities and then prove the existence of normalized ground states. In sections 3, we aim to estimate nonlinearity by using new Strichartz estimates and prove Theorem \ref{t1.2}.
\section{Ground state solution}

We shall make use of the following classical inequalities:

\textbf{Sobolev inequality}(see \cite{HB1983}) For any $d \geq 3$ there exists an optimal constant $\mathcal{S}>0$ depending only on $d$, such that
\begin{equation*}
  \mathcal{S}\|u\|_{2^*}^2 \leq\|\nabla u\|_2^2,\ \forall u \in H^1(\mathbb{R}^d).
\end{equation*}

\textbf{Gagliardo-Nirenberg inequality}(see \cite{LN1959}) If $d \geq 2, p \in[2, \frac{2d}{d-2})$ and $\beta=d(\frac{1}{2}-\frac{1}{p})$, then
\begin{equation*}
  \|u\|_p \leq C_{d, p}\|\nabla u\|_2^\beta\|u\|_2^{1-\beta},
\end{equation*}
for all $u \in H^1(\mathbb{R}^d)$.

\textbf{Hardy-Littlewood-Sobolev inequality}(see \cite{ELML2001}) Let $t, r>1$ and $0<\alpha<d$ be such that $\frac{1}{r}+\frac{1}{t}+\frac{\alpha}{d}=2, f \in L^r(\mathbb{R}^d)$ and $h \in L^t(\mathbb{R}^d)$. Then, there exists a constant $C(d, \alpha, r, t)>0$ such that
$$
\left|\int_{\mathbb{R}^d}\left(I_{\alpha}* f\right) h d x\right| \leq C(d, \alpha, r, t)\|f\|_r\|h\|_t .
$$
The equation holds if and only if $t=r=\frac{2 d}{2 d-\alpha}$, then
$$
C(d, \alpha, r, t)=C(d, \alpha)=\pi^{\frac{\alpha}{2}} \frac{\Gamma\left(\frac{d-\alpha}{2}\right)}{\Gamma\left(d-\frac{\alpha}{2}\right)}\left\{\frac{\Gamma\left(\frac{d}{2}\right)}{\Gamma(d)}\right\}^{-1+\frac{\alpha}{d}} 
$$
with $f(x)=Ch(x)$ and
$$
h(x)=A\left(\gamma^2+|x-c|^2\right)^{-\frac{2 d-\alpha}{2}}
$$
for some $A \in \mathbb{C}, 0 \neq \gamma \in \mathbb{R}$ and $c \in \mathbb{R}^d$.

For any $u \in H^1(\mathbb{R}^d)$, take $t=r=\frac{2d}{2d-\alpha}, f=h=|u|^q$ in Hardy-Littlewood-Sobolev inequality, we have
\begin{equation} \label{eq2.1}
\int_{\mathbb{R}^d}\left(I_{\alpha  }*|u|^q\right)|u|^q d x \leq C(d, \alpha)\|u\|_{\frac{2 d q}{2 d-\alpha}}^{2 q} \leq C_{d, q,\alpha  }\left\|\nabla u\right\|_2^{d(q-2)+\alpha}\|u\|_2^{2 q-d(q-2)-\alpha}
\end{equation}
by using Gagliardo-Nirenberg inequality. Now, we define the best constant $S_{\alpha}$ as 
$$
S_{\alpha}:=\inf _{u \in D^{1, 2}(\mathbb{R}^d)\setminus \{0\}} \frac{\|\nabla u\|_2^2}{\left(\int_{\mathbb{R}^d}\left(I_{\alpha}*|u|^{2_{\alpha}^*}\right)|u|^{2_{\alpha}^*} dx\right)^{\frac{1}{2_\alpha^*}}}.
$$
Moreover, the constants $S_\alpha$ and $\mathcal{S}$ have the relationship: $S_\alpha=\mathcal{S}C_{d,\alpha}^{-\frac{1}{2_\alpha^*}}$.

Now, it follows from \eqref{eq2.1} that
\begin{eqnarray*}
I(u)&=&\frac{1}{2}\|\nabla u\|_2^2+\frac{1}{2}\int_{\mathbb{R}^d}V(x)u^2dx -\frac{1}{2  q}\int_{\mathbb{R}^d}(I_{\alpha}\ast|u|^{q})|u|^{q}dx-\frac{1}{2\cdot2_\alpha^*}\int_{\mathbb{R}^d}(I_{\alpha}\ast|u|^{2_{\alpha}^*})|u|^{2_{\alpha}^*}dx \\
&\geq&\frac{1}{2}\left(1-\left\|V_{-}\right\|_{\frac{d}{2}} \mathcal{S}^{-1}\right)  \int_{\mathbb{R}^d}|\nabla u|^2 d x-\frac{C_{d, q,\alpha}  a^{\frac{2 q-d(q-2)-\alpha}{2}}}{2q}\left(\int_{\mathbb{R}^d}|\nabla u|^2 d x\right)^{\frac{d(q-2)+\alpha}{2}}\nonumber\\
&&-\frac{1}{2\cdot2_\alpha^*\cdot S_\alpha^{
 2_\alpha^* }}\left(\int_{\mathbb{R}^d}|\nabla u|^2 d x\right)^{2_\alpha^*},
\end{eqnarray*}
we consider the function $f(a,\rho)$ defined on $(0, \infty)\times(0, \infty)$ by
$$
f(a,\rho):= \frac{1}{2}\left(1-\left\|V_{-}\right\|_{\frac{d}{2}} \mathcal{S}^{-1}\right) -\frac{C_{d, q,\alpha}  a^{\frac{2 q-d(q-2)-\alpha}{2}}}{2q}\rho^{\frac{d(q-2)+\alpha-2}{2}}-\frac{1}{2\cdot2_\alpha^*\cdot S_\alpha^{
 2_\alpha^* }}\rho^{2_\alpha^*-1}.
$$
For each $a\in(0, \infty)$, its restriction $f_a(\rho)$ defined on $(0, \infty)$ by $f_a(\rho):=f(a, \rho)$.
\begin{lemma}\label{L1.1}
For each $a>0$, the function $f(a,\rho)$ has a unique global maximum and the maximum value satisfies
$$
\begin{cases}\max\limits_{\rho>0} f_a(\rho)>0, & \text { if } \quad a<a_0, \\ \max\limits_{\rho>0}f_a(\rho)=0 & \text { if } \quad a=a_0, \\ \max\limits_{\rho>0}f_a(\rho)<0, & \text { if } \quad a>a_0,\end{cases}
$$
where
$$
a_0:=\left[\frac{ \frac{1}{2}\left(1-\left\|V_{-}\right\|_{\frac{d}{2}} \mathcal{S}^{-1}\right)}{2 K}\right]^{\frac{d}{d+2-\alpha}}>0
$$
with
\begin{eqnarray*}
K &:=&\frac{C_{d, q,\alpha}  }{2q}\left[ \frac{2_\alpha^*[2-\alpha-d(q-2)]C_{d,q,\alpha} S_\alpha^{2_\alpha^*}}{2q(2_\alpha^*-1)}\right]^{ \frac{d(q-2)+\alpha-2}{2\cdot2_\alpha^*-d(q-2)-\alpha}} \\ &&+\frac{1}{2\cdot2_\alpha^*\cdot S_\alpha^{2_\alpha^*}}\left[ \frac{2_\alpha^*[2-\alpha-d(q-2)]C_{d,q,\alpha} S_\alpha^{2_\alpha^*}}{2q(2_\alpha^*-1)}\right]^{\frac{2(2_\alpha^*-1)}{2\cdot2_\alpha^*-d(q-2)-\alpha}}\\
&>&0.
\end{eqnarray*}
\end{lemma}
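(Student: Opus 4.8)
The plan is to study $f_a$ as a one-variable function $f_a(\rho)=A-B(a)\rho^{p_1}-C\rho^{p_2}$ on $(0,\infty)$, where I abbreviate $A=\tfrac12\big(1-\|V_{-}\|_{\frac d2}\mathcal S^{-1}\big)$, $B(a)=\tfrac{C_{d,q,\alpha}}{2q}a^{s}$ with $s=\tfrac{2q-d(q-2)-\alpha}{2}$, $C=\tfrac{1}{2\cdot 2_\alpha^* S_\alpha^{2_\alpha^*}}$, and the exponents are $p_1=\tfrac{d(q-2)+\alpha-2}{2}$, $p_2=2_\alpha^*-1$. First I would pin down the signs of the exponents, which is exactly where the hypotheses on $q$ are decisive: $q<\tfrac{2d-\alpha+2}{d}$ means $d(q-2)+\alpha<2$, so $p_1<0$, while $q>\tfrac{2d-\alpha}{d}$ means $d(q-2)+\alpha>0$, so $p_1>-1$; and $0<\alpha<d$ gives $p_2=\tfrac{d+2-\alpha}{d-2}>0$. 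The same two bounds yield $2q-d(q-2)-\alpha>2(q-1)>0$, hence $s>0$ and $B(a)$ strictly increasing in $a$; I also record that $A>0$ under $\|V_{-}\|_{\frac d2}<\mathcal S$.

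Because $-1<p_1<0<p_2$, the singular term $-B(a)\rho^{p_1}$ drives $f_a\to-\infty$ as $\rho\to0^+$ and the critical term $-C\rho^{p_2}$ drives $f_a\to-\infty$ as $\rho\to\infty$, so $f_a$ attains a global maximum on $(0,\infty)$. To get uniqueness I would equivalently minimise $g(\rho):=B(a)\rho^{p_1}+C\rho^{p_2}$, so that $\max_{\rho} f_a=A-\min_{\rho} g$. The stationarity equation $g'(\rho)=B(a)p_1\rho^{p_1-1}+Cp_2\rho^{p_2-1}=0$ is equivalent to $\rho^{\,p_2-p_1}=\tfrac{-p_1 B(a)}{p_2 C}$, which has a single root $\rho^*(a)$ since $p_2-p_1>0$ and $-p_1>0$. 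Verifying $g'<0$ near $0$ and $g'>0$ near $\infty$ shows $\rho^*(a)$ is the unique minimiser of $g$, hence the unique maximiser of $f_a$.

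The crux is the closed-form evaluation $\max_{\rho} f_a=A-g(\rho^*(a))$. Substituting $\rho^*(a)=\big(\tfrac{-p_1 B(a)}{p_2 C}\big)^{1/(p_2-p_1)}$ and collecting the $a$-powers, both summands of $g(\rho^*)$ carry the same factor $a^{\,sp_2/(p_2-p_1)}$, giving $\max_{\rho} f_a=A-K a^{\sigma}$ with $\sigma=\tfrac{sp_2}{p_2-p_1}$ and $K$ exactly the constant in the statement: the bracket displayed there equals $W:=\tfrac{-p_1}{p_2}\cdot\tfrac{C_{d,q,\alpha}/(2q)}{C}$ (the $a$-free part of $\rho^{*\,(p_2-p_1)}$), and its two exponents are $\tfrac{p_1}{p_2-p_1}$ and $\tfrac{p_2}{p_2-p_1}$ once one uses $2(p_2-p_1)=2\cdot 2_\alpha^*-d(q-2)-\alpha$. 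The one computation I would really carry out, and the main obstacle, is the simplification of $\sigma$: from $2_\alpha^*(d-2)=2d-\alpha$ together with the elementary identity $4-\alpha-(d-2)(q-2)=2q-d(q-2)-\alpha$ one obtains $2\cdot 2_\alpha^*-d(q-2)-\alpha=\tfrac{d}{d-2}\,[\,2q-d(q-2)-\alpha\,]$, i.e. $p_2-p_1=\tfrac{d}{d-2}\,s$, whence $\sigma=\tfrac{(d-2)p_2}{d}=\tfrac{d+2-\alpha}{d}$. This is the only step requiring genuine algebra.

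Finally, since $K>0$ and $\sigma=\tfrac{d+2-\alpha}{d}>0$, the map $a\mapsto\max_{\rho} f_a=A-K a^{\sigma}$ is continuous and strictly decreasing on $(0,\infty)$, equal to $A>0$ in the limit $a\to0^+$ and tending to $-\infty$ as $a\to\infty$. Solving the scalar equation $A=K a_0^{\sigma}$ then pins down the unique threshold $a_0=(A/K)^{\frac{d}{d+2-\alpha}}$, matching the form displayed in the statement, and the trichotomy $\max_{\rho} f_a>0$, $=0$, $<0$ according as $a<a_0$, $a=a_0$, $a>a_0$ is an immediate consequence of this strict monotonicity. This proves the lemma.
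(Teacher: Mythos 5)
Your proof is correct and follows essentially the same route as the paper: locate the unique zero of $f_a'(\rho)$, use $f_a(\rho)\to-\infty$ as $\rho\to0^+$ and as $\rho\to\infty$ to conclude it is the unique global maximizer, evaluate the maximum in closed form as $A-Ka^{\frac{d+2-\alpha}{d}}$ (in your notation $A=\tfrac12\bigl(1-\|V_{-}\|_{\frac d2}\mathcal S^{-1}\bigr)$), and conclude by strict monotonicity of $a\mapsto\max_\rho f_a(\rho)$ — a step the paper leaves implicit ("and hence the lemma follows") but you make explicit, which is a small improvement in completeness. Your sign analysis of the exponents and the simplification $2\cdot2_\alpha^*-d(q-2)-\alpha=\tfrac{d}{d-2}\,[2q-d(q-2)-\alpha]$ are exactly the algebra underlying the paper's displayed computation.

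One point you should not gloss over: the threshold you derive, $a_0=(A/K)^{\frac{d}{d+2-\alpha}}$, does \emph{not} match the value displayed in the statement, which has $2K$ in the denominator. With the displayed definition one would get $\max_{\rho>0}f_{a_0}(\rho)=A-K\cdot\tfrac{A}{2K}=\tfrac{A}{2}>0$, not $0$. This is an inconsistency internal to the paper: its own proof performs the same computation you do, obtains $\max_{\rho>0}f_a(\rho)=A-Ka^{\frac{d+2-\alpha}{d}}$, and then asserts $\max_{\rho>0}f_{a_0}(\rho)=0$ ``by the definition of $a_0$,'' which fails for the displayed $a_0$. So your formula is the consistent one, but writing that it ``matches the form displayed in the statement'' papers over a genuine factor-of-two discrepancy that should be flagged rather than absorbed. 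A related minor point you handled correctly: $a_0>0$ and the trichotomy require $A>0$, i.e.\ the strict inequality $\|V_{-}\|_{\frac d2}<\mathcal S$, whereas the theorem's hypothesis is stated with ``$\leq$''.
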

\begin{proof}
By definition of $f_a(\rho)$, we have that
$$
f_a^{\prime}(\rho)=-\frac{C_{d, q,\alpha}  a^{\frac{2 q-d(q-2)-\alpha}{2}}(d(q-2)+\alpha-2)}{4q}\rho^{\frac{d(q-2)+\alpha-4}{2}}-\frac{2_\alpha^*-1}{2}\cdot\frac{1}{2_\alpha^*\cdot S_\alpha^{
 2_\alpha^* }}\rho^{2_\alpha^*-2}.
$$
Hence, the equation $f^{\prime}(\rho)=0$ has a unique solution given by
$$
\rho_a=\left[ \frac{2_\alpha^*[2-\alpha-d(q-2)]C_{d,q,\alpha} S_\alpha^{2_\alpha^*}}{2q(2_\alpha^*-1)}\right]^{\frac{2}{2\cdot2_\alpha^*-d(q-2)-\alpha}} a^{\frac{2q-d(q-2)-\alpha}{2\cdot2_\alpha^*-d(q-2)-\alpha}} .
$$
Taking into account that $f_a(\rho) \rightarrow-\infty$ as $\rho \rightarrow 0$ and $f_a(\rho) \rightarrow-\infty$ as $\rho \rightarrow \infty$, we obtain that $\rho_a$ is the unique global maximum point of $f(a,\rho)$ and the maximum value is
\begin{eqnarray*}
\max_{\rho>0}f_a(\rho) &=&\frac{1}{2}\left(1-\left\|V_{-}\right\|_{\frac{d}{2}} \mathcal{S}^{-1}\right) -\frac{C_{d, q,\alpha}  a^{\frac{2 q-d(q-2)-\alpha}{2}}}{2q}\left[ \frac{2_\alpha^*[2-\alpha-d(q-2)]C_{d,q,\alpha} S_\alpha^{2_\alpha^*}}{2q(2_\alpha^*-1)}\right]^{ \frac{d(q-2)+\alpha-2}{2\cdot2_\alpha^*-d(q-2)-\alpha}}    \\
&&\cdot a^{\frac{2q-d(q-2)-\alpha}{2\cdot2_\alpha^*-d(q-2)-\alpha}\cdot\frac{d(q-2)+\alpha-2}{2}}-\frac{a^{\frac{2q-d(q-2)-\alpha}{2\cdot2_\alpha^*-d(q-2)-\alpha}\cdot (2_\alpha^*-1) }}{2\cdot2_\alpha^*\cdot S_\alpha^{2_\alpha^*}}\left[ \frac{2_\alpha^*[2-\alpha-d(q-2)]C_{d,q,\alpha} S_\alpha^{2_\alpha^*}}{2q(2_\alpha^*-1)}\right]^{\frac{2(2_\alpha^*-1)}{2\cdot2_\alpha^*-d(q-2)-\alpha}}  \\
&=& \frac{1}{2}\left(1-\left\|V_{-}\right\|_{\frac{d}{2}} \mathcal{S}^{-1}\right) -\frac{C_{d, q,\alpha}  }{2q}\left[ \frac{2_\alpha^*[2-\alpha-d(q-2)]C_{d,q,\alpha} S_\alpha^{2_\alpha^*}}{2q(2_\alpha^*-1)}\right]^{ \frac{d(q-2)+\alpha-2}{2\cdot2_\alpha^*-d(q-2)-\alpha}} \cdot a^{\frac{d+2-\alpha}{d}}   \\
&&-\frac{1}{2\cdot2_\alpha^*\cdot S_\alpha^{2_\alpha^*}}\left[ \frac{2_\alpha^*[2-\alpha-d(q-2)]C_{d,q,\alpha} S_\alpha^{2_\alpha^*}}{2q(2_\alpha^*-1)}\right]^{\frac{2(2_\alpha^*-1)}{2\cdot2_\alpha^*-d(q-2)-\alpha}}\cdot a^{\frac{d+2-\alpha}{d}}     \\
&=& \frac{1}{2}\left(1-\left\|V_{-}\right\|_{\frac{d}{2}} \mathcal{S}^{-1}\right)-Ka^{\frac{d+2-\alpha}{d}}.
\end{eqnarray*}
By the definition of $a_0$, we have that $\max\limits_{\rho>0} f_{a_0}(\rho)=0$, and hence the lemma follows.
\end{proof}
\begin{lemma}\label{L1.2}
Let $(a_1, \rho_1) \in(0, \infty) \times(0, \infty)$ be such that $f(a_1, \rho_1) \geq 0$. Then for any $a_2 \in(0, a_1]$, it holds
$$
f(a_2, \rho_2) \geq 0  \text { if }  \rho_2 \in\left[\frac{a_2}{a_1} \rho_1, \rho_1\right].
$$
\end{lemma}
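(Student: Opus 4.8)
The plan is to exploit the schematic form of $f$. Writing
$$f(a,\rho) = A - B\,a^{\mu}\rho^{\nu} - D\,\rho^{\kappa},$$
where $A=\tfrac12\big(1-\|V_-\|_{\frac d2}\mathcal S^{-1}\big)$ is independent of $a$, $B=\tfrac{C_{d,q,\alpha}}{2q}>0$, $D=\tfrac{1}{2\cdot 2_\alpha^* S_\alpha^{2_\alpha^*}}>0$, and the exponents are $\mu=\tfrac{2q-d(q-2)-\alpha}{2}$, $\nu=\tfrac{d(q-2)+\alpha-2}{2}$, $\kappa=2_\alpha^*-1$, I would first record the signs forced by the range $\frac{2d-\alpha}{d}<q<\frac{2d-\alpha+2}{d}$: one checks $\mu>0$, $\nu\in(-1,0)$, $\kappa>0$, and — crucially — the identity $\mu+\nu=q-1>0$. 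These are one-line algebraic verifications that I would not belabor.

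Two structural facts then drive the argument. First, since $A$ does not depend on $a$ and $\mu>0$ while $\rho^{\nu}>0$, for fixed $\rho$ the map $a\mapsto f(a,\rho)$ is decreasing, so $f(a_2,\rho)\geq f(a_1,\rho)$ whenever $a_2\le a_1$. Second, for fixed $a$ the function $\rho\mapsto f(a,\rho)$ is unimodal on $(0,\infty)$: because $\nu<0$ it tends to $-\infty$ as $\rho\to0^+$, because $\kappa>0$ it tends to $-\infty$ as $\rho\to\infty$, and by Lemma \ref{L1.1} it has a single critical point, its global maximum $\rho_a$; hence it is strictly increasing on $(0,\rho_a]$ and strictly decreasing on $[\rho_a,\infty)$. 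Consequently the minimum of $f(a_2,\cdot)$ over any closed subinterval is attained at one of the two endpoints, which reduces the claim to checking
$$\min\Big\{f\big(a_2,\tfrac{a_2}{a_1}\rho_1\big),\,f(a_2,\rho_1)\Big\}\geq0.$$

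The right endpoint is immediate from the first structural fact: $f(a_2,\rho_1)\geq f(a_1,\rho_1)\geq0$. The left endpoint is where the scaling identity matters, and it is the only genuinely computational point. Setting $\lambda=\tfrac{a_2}{a_1}\in(0,1]$ and substituting $a_2=\lambda a_1$, $\rho=\lambda\rho_1$ gives
$$f\big(a_2,\tfrac{a_2}{a_1}\rho_1\big)=A - B\,\lambda^{\mu+\nu}a_1^{\mu}\rho_1^{\nu} - D\,\lambda^{\kappa}\rho_1^{\kappa}.$$
Since $\lambda\le1$ while $\mu+\nu=q-1>0$ and $\kappa>0$, we have $\lambda^{\mu+\nu}\le1$ and $\lambda^{\kappa}\le1$; as both subtracted terms carry positive coefficients, this yields
$$f\big(a_2,\tfrac{a_2}{a_1}\rho_1\big)\geq A-B\,a_1^{\mu}\rho_1^{\nu}-D\,\rho_1^{\kappa}=f(a_1,\rho_1)\geq0.$$
Both endpoints are therefore nonnegative, and unimodality propagates this to the whole interval, completing the proof. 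The main obstacle is precisely the left endpoint: it succeeds exactly because the combined scaling exponent $\mu+\nu$ equals $q-1$, which the hypotheses on $q$ force to be positive; were the identity $\mu+\nu=q-1$ to fail (or were $q\le 1$), the comparison with $f(a_1,\rho_1)$ would break down.
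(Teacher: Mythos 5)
Your proof is correct and follows essentially the same route as the paper's: nonnegativity at the right endpoint $\rho_1$ via monotonicity of $a\mapsto f(a,\rho)$, nonnegativity at the left endpoint $\tfrac{a_2}{a_1}\rho_1$ via the scaling computation whose success rests on $\mu+\nu=q-1>0$ (the paper's ``since $q-1>0$''), and propagation to the whole interval using the unique critical point of $f_{a_2}$ from Lemma \ref{L1.1} (you phrase this as unimodality forcing the minimum to an endpoint, the paper as a contradiction with an interior local minimum, which is the same argument). No gaps; your explicit isolation of the exponent identity $\mu+\nu=q-1$ is a slightly cleaner presentation of the step the paper leaves implicit in its inequality chain.
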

\begin{proof}
Note that $a \rightarrow f(a, \rho)$ is a non-increasing function, so we have
\begin{equation}\label{eq1.5}
  f(a_2, \rho_1) \geq f(a_1, \rho_1) \geq 0.
\end{equation}
Now, by direct calculations, one has
\begin{eqnarray}\label{eq1.6}
f(a_2, \frac{a_2}{a_1} \rho_1)&=&\frac{1}{2}\left(1-\left\|V_{-}\right\|_{\frac{d}{2}} \mathcal{S}^{-1}\right) -\frac{C_{d, q,\alpha}  a_2^{\frac{2 q-d(q-2)-\alpha}{2}}}{2q}\left(\frac{a_2}{a_1} \rho_1\right)^{\frac{d(q-2)+\alpha-2}{2}}\nonumber\\
&&-\frac{1}{2\cdot2_\alpha^*\cdot S_\alpha^{
 2_\alpha^* }}\left(\frac{a_2}{a_1} \rho_1\right)^{2_\alpha^*-1}\nonumber\\
&\geq&\frac{1}{2}\left(1-\left\|V_{-}\right\|_{\frac{d}{2}} \mathcal{S}^{-1}\right) -\frac{C_{d, q,\alpha}  a_1^{\frac{2 q-d(q-2)-\alpha}{2}}}{2q}  \rho_1^{\frac{d(q-2)+\alpha-2}{2}}-\frac{1}{2\cdot2_\alpha^*\cdot S_\alpha^{
 2_\alpha^* }}\rho_1^{2_\alpha^*-1}  \nonumber\\
&=&f(a_1, \rho_1)\nonumber\\
&\geq& 0
\end{eqnarray}
since $q-1>0$. We claim that if $f_{a_2}\left(\rho^{\prime}\right) \geq 0$ and $f_{a_2}\left(\rho^{\prime \prime}\right) \geq 0$, then
\begin{equation}\label{eq1.7}
  f(a_2, \rho)=f_{a_2}(\rho) \geq 0 \text { for any } \rho \in\left[\rho^{\prime}, \rho^{\prime\prime}\right] .
\end{equation}
In fact, if $f_{a_2}(\rho)<0$ for some $\rho \in (\rho^{\prime}, \rho^{\prime \prime} )$, then there exists a local minimum point on $(\rho_1, \rho_2)$ and this contradicts the fact that the function $f_{a_2}(\rho)$ has a unique critical point which has to coincide necessarily with its unique global maximum via Lemma \ref{L1.1}. Now, we can choose $\rho^{\prime}=\frac{a_2}{a_1}\rho_1$ and $\rho^{\prime \prime}=\rho_1$, then \eqref{eq1.5}-\eqref{eq1.7} imply the lemma.
\end{proof}
\begin{lemma}\label{L1.3}
For any $u \in S(a)$, we have that
$$
I(u) \geq\|\nabla u\|_2^2 f\left(a,\|\nabla u\|_2^2\right) .
$$
\end{lemma}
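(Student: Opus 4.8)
The plan is to read the claimed inequality directly off the chain of estimates already assembled immediately before Lemma~\ref{L1.1}. Set $\rho:=\|\nabla u\|_2^2$ and recall that $u\in S(a)$ forces $\|u\|_2^2=a$. I would bound the potential term from below and the two Choquard terms from above, then collect the resulting powers of $\rho$ and factor out a single copy of $\rho$.

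For the potential term I would split $V=V_+-V_-$ and use H\"older's inequality with exponents $\tfrac{d}{2}$ and $\tfrac{d}{d-2}$ together with the Sobolev inequality:
\[
\int_{\R^d}V(x)u^2\,dx \ge -\int_{\R^d}V_-u^2\,dx \ge -\|V_-\|_{\frac d2}\|u\|_{2^*}^2 \ge -\|V_-\|_{\frac d2}\mathcal S^{-1}\|\nabla u\|_2^2,
\]
so that $\tfrac12\|\nabla u\|_2^2+\tfrac12\int_{\R^d} V u^2\,dx \ge \tfrac12\bigl(1-\|V_-\|_{\frac d2}\mathcal S^{-1}\bigr)\rho$. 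For the subcritical Choquard term I would invoke \eqref{eq2.1} and replace $\|u\|_2$ by $a^{1/2}$, which gives
\[
\frac1{2q}\int_{\R^d}(I_\alpha*|u|^q)|u|^q\,dx \le \frac{C_{d,q,\alpha}\,a^{\frac{2q-d(q-2)-\alpha}{2}}}{2q}\,\rho^{\frac{d(q-2)+\alpha}{2}}.
\]
For the critical term I would use the definition of $S_\alpha$, which yields $\int_{\R^d}(I_\alpha*|u|^{2_\alpha^*})|u|^{2_\alpha^*}\,dx\le S_\alpha^{-2_\alpha^*}\rho^{2_\alpha^*}$ and hence an upper bound of $\tfrac{1}{2\cdot2_\alpha^*S_\alpha^{2_\alpha^*}}\rho^{2_\alpha^*}$ on that contribution.

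Combining the three estimates gives exactly
\[
I(u)\ge \frac12\Bigl(1-\|V_-\|_{\frac d2}\mathcal S^{-1}\Bigr)\rho-\frac{C_{d,q,\alpha}\,a^{\frac{2q-d(q-2)-\alpha}{2}}}{2q}\rho^{\frac{d(q-2)+\alpha}{2}}-\frac{1}{2\cdot2_\alpha^* S_\alpha^{2_\alpha^*}}\rho^{2_\alpha^*}.
\]
The only remaining step is bookkeeping: using $\rho\cdot\rho^{\frac{d(q-2)+\alpha-2}{2}}=\rho^{\frac{d(q-2)+\alpha}{2}}$ and $\rho\cdot\rho^{2_\alpha^*-1}=\rho^{2_\alpha^*}$, the right-hand side is precisely $\rho\,f(a,\rho)=\|\nabla u\|_2^2\,f\bigl(a,\|\nabla u\|_2^2\bigr)$, which is the claim. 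I do not expect a genuine obstacle here; the only points requiring care are matching the exponent $\tfrac{2q-d(q-2)-\alpha}{2}$ of $a$ to the constraint $\|u\|_2^2=a$, and checking that the assumption $\tfrac{2d-\alpha}{d}<q<\tfrac{2d-\alpha+2}{d}$ keeps every exponent in the admissible regime of \eqref{eq2.1} so that the Gagliardo--Nirenberg step is licit.
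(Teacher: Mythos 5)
Your proposal is correct and is essentially the paper's own argument: the paper's proof of Lemma~\ref{L1.3} is a one-line appeal to the definition of $f(a,\rho)$, because the chain of estimates you reproduce (H\"older--Sobolev for the potential term, inequality \eqref{eq2.1} for the subcritical Choquard term, and the definition of $S_\alpha$ for the critical term) was already carried out in the text immediately before Lemma~\ref{L1.1}, where $f$ is defined precisely so that $I(u)\geq\rho\,f(a,\rho)$ with $\rho=\|\nabla u\|_2^2$. The only cosmetic discrepancy is your writing $V=V_+-V_-$ while the paper sets $V_-:=\min\{V,0\}\leq 0$, but both conventions yield the same bound $-\|V_-\|_{\frac d2}\mathcal S^{-1}\|\nabla u\|_2^2$.
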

\begin{proof}
According to the definition of $f(a,\rho)$, the lemma clearly holds.
\end{proof}
Note that by Lemma \ref{L1.1} and Lemma \ref{L1.2}, we have that $f(a_0, \rho_0)=0$ and $f(a, \rho_0)>0$ for all $a \in(0, a_0)$. We define
$$
\mathbf{B}_{\rho_0}:=\left\{u \in H^1(\mathbb{R}^d):\|\nabla u\|_2^2<\rho_0\right\} \text { and } \mathbf{V}(a):=S(a) \cap \mathbf{B}_{\rho_0} .
$$
Now, we consider the following local minimization problem: for any $a \in(0, a_0)$,
$$
m(a):=\inf_{u \in \mathbf{V}(a)} I(u).
$$
\begin{lemma}\label{L1.4}
For any $a \in\left(0, a_0\right)$, it holds that
$$
m(a)=\inf _{u \in \mathbf{V}(a)} I(u)<0<\inf _{u \in \partial \mathbf{V}(a)} I(u) .
$$
\end{lemma}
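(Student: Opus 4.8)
The plan is to prove the two strict inequalities independently: the right-hand one is essentially immediate from the machinery already in place, while the left-hand one calls for the construction of an explicit competitor.

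For $0<\inf_{\partial\mathbf{V}(a)}I$ I would argue directly from Lemma~\ref{L1.3}. Every $u\in\partial\mathbf{V}(a)$ satisfies $\|u\|_2^2=a$ and $\|\nabla u\|_2^2=\rho_0$ (the boundary of the ball $\mathbf{B}_{\rho_0}$ inside the sphere $S(a)$), so Lemma~\ref{L1.3} gives $I(u)\ge\rho_0\,f(a,\rho_0)$. Since it was recorded right after Lemma~\ref{L1.3} that $f(a,\rho_0)>0$ for every $a\in(0,a_0)$, this lower bound is a fixed positive constant independent of $u$; taking the infimum over $\partial\mathbf{V}(a)$ yields $\inf_{\partial\mathbf{V}(a)}I\ge\rho_0 f(a,\rho_0)>0$.

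For $m(a)<0$ I would exhibit a single element of $\mathbf{V}(a)$ with negative energy. Fix any $u\in S(a)$ and consider the mass-preserving dilation $u_t(x):=t^{d/2}u(tx)$, which obeys $\|u_t\|_2^2=a$ and $\|\nabla u_t\|_2^2=t^2\|\nabla u\|_2^2$; hence $u_t\in\mathbf{B}_{\rho_0}$, i.e.\ $u_t\in\mathbf{V}(a)$, as soon as $t$ is small. Tracking the scaling of each term of $I(u_t)$, the kinetic term is of order $t^2$, the subcritical Choquard term carries the power $t^{d(q-2)+\alpha}$, and the critical Choquard term the power $t^{2\cdot 2_\alpha^*}$ with $2\cdot 2_\alpha^*>2$. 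The admissible range $\frac{2d-\alpha}{d}<q<\frac{2d-\alpha+2}{d}$ forces $0<d(q-2)+\alpha<2$ — this is exactly the sign information $p_1<0$ used in the proof of Lemma~\ref{L1.1} — so among the negative terms the subcritical one is of strictly lower order than the kinetic term as $t\to0^+$, and it dominates.

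The delicate point, and the one I expect to be the main obstacle, is the potential contribution $\tfrac12\int_{\mathbb{R}^d}V u_t^2\,dx$, whose behaviour under dilation is not transparent. I would control it by H\"older and the Sobolev inequality, using the $L^{d/2}$-integrability of $V$:
$$\left|\int_{\mathbb{R}^d}V u_t^2\,dx\right|\le \|V\|_{\frac d2}\,\|u_t\|_{2^*}^2\le \|V\|_{\frac d2}\,\mathcal{S}^{-1}\,\|\nabla u_t\|_2^2=\|V\|_{\frac d2}\,\mathcal{S}^{-1}\,t^2\|\nabla u\|_2^2,$$
so that the potential term is again $O(t^2)$, hence of higher order than $t^{d(q-2)+\alpha}$. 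Collecting the estimates gives $I(u_t)\le C_1 t^2-C_2\,t^{d(q-2)+\alpha}$ with $C_2>0$ (the critical term, being negative, only helps), and since $d(q-2)+\alpha<2$ the right-hand side is strictly negative for all sufficiently small $t>0$. Such a $u_t$ lies in $\mathbf{V}(a)$ with $I(u_t)<0$, whence $m(a)=\inf_{\mathbf{V}(a)}I<0$; together with the previous step this yields $m(a)<0<\inf_{\partial\mathbf{V}(a)}I$.
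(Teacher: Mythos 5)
Your proposal is correct and takes essentially the same approach as the paper's own proof: the lower bound $I(u)\ge \rho_0 f(a,\rho_0)>0$ on $\partial \mathbf{V}(a)$ via Lemma~\ref{L1.3} together with the fact $f(a,\rho_0)>0$ for $a\in(0,a_0)$, and the mass-preserving dilation $u_s(x)=s^{d/2}u(sx)$ for the negativity of $m(a)$. In particular, your treatment of the potential term by H\"older and Sobolev, $\bigl|\int_{\mathbb{R}^d}Vu_s^2\,dx\bigr|\le \|V\|_{\frac d2}\mathcal{S}^{-1}s^2\|\nabla u\|_2^2$, and the observation that the term of order $s^{d(q-2)+\alpha}$ with $d(q-2)+\alpha<2$ dominates as $s\to 0^+$ are exactly the ingredients used in the paper.
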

\begin{proof}
For any $u \in \partial \mathbf{V}(a)$, we have $\|\nabla u\|_2^2=\rho_0$. By using Lemma \ref{L1.3}, we get
$$
I(u) \geq\|\nabla u\|_2^2 f(\|u\|_2^2,\|\nabla u\|_2^2)=\rho_0 f(a, \rho_0)>0.
$$
Now let $u \in S(a)$ be arbitrary but fixed. For $s \in(0, \infty)$, define
$$
u_s(x):=s^{\frac{d}{2}} u(s x) .
$$
Obviously, $u_s \in S(a)$ for any $s \in(0, \infty)$. We define on $(0, \infty)$ the map,
\begin{eqnarray*}
\psi_u(s):=I(u_s)&=&\frac{1}{2}\|\nabla u_s\|_2^2+\frac{1}{2}\int_{\mathbb{R}^d}V(x)u_s^2dx -\frac{1}{2 q}\int_{\mathbb{R}^d}(I_{\alpha}\ast|u_s|^{q})|u_s|^{q}dx\\
&&-\frac{1}{2\cdot2_\alpha^*}\int_{\mathbb{R}^d}(I_{\alpha}\ast|u_s|^{2_{\alpha}^*})|u_s|^{2_{\alpha}^*}dx\\
&\leq&\frac{s^2}{2}\left(1+\|V\|_{\frac{d}{2}} \mathcal{S}^{-1}\right)\|\nabla u\|_2^2-\frac{s^{d(q-2)+\alpha}}{2 q}\int_{\mathbb{R}^d}(I_{\alpha}\ast|u_s|^{q})|u_s|^{q}dx\\
&&-\frac{s^{2\cdot2_\alpha^*}}{2\cdot2_\alpha^*}\int_{\mathbb{R}^d}(I_{\alpha}\ast|u_s|^{2_{\alpha}^*})|u_s|^{2_{\alpha}^*}dx.
\end{eqnarray*}
Note that $d(q-2)+\alpha<2$ and $2_\alpha^*>1$, we see that $\psi_u(s) \rightarrow 0^{-}$ as $s \rightarrow 0$. Hence, there exists $s_0>0$ small enough such that $\left\|\nabla(u_{s_0})\right\|_2^2=s_0^2\|\nabla u\|_2^2<\rho_0$ and $I(u_*)=\psi_\mu(s_0)<0$, which implies that $m(a)<0$.
\end{proof}
We now introduce the set
$$
\mathcal{M}_a:=\left\{u \in \mathbf{V}(a): I(u)=m(a)\right\}
$$
and collect some properties of $m(a)$.
\begin{lemma}\label{L1.5}
$\mathrm{(i)}$ $a \in\left(0, a_0\right) \mapsto m(a)$ is a continuous mapping.

$\mathrm{(ii)}$ Let $a \in\left(0, a_0\right)$. We have for all $\alpha \in(0, a): m(a) \leq m(\alpha)+m(a-\alpha)$ and if $m(\alpha)$ or $m(a-\alpha)$ is reached then the inequality is strict.
\end{lemma}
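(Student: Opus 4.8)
My plan is to compare $m(a)$ with $m(a_n)$ for $a_n\to a$ through the amplitude rescaling $u\mapsto\sqrt{a_n/a}\,u$, which carries $S(a)$ onto $S(a_n)$ and multiplies $\|\nabla u\|_2^2$ by $a_n/a$. For the bound $\limsup_n m(a_n)\le m(a)$ I would fix $\varepsilon>0$, pick $u\in\mathbf V(a)$ with $I(u)\le m(a)+\varepsilon<0$ (recall $m(a)<0$ by Lemma \ref{L1.4}), and observe that Lemma \ref{L1.3} then forces $f(a,\|\nabla u\|_2^2)<0$, so that $\|\nabla u\|_2^2$ lies strictly below $\rho_0$; hence for $n$ large $\sqrt{a_n/a}\,u\in\mathbf V(a_n)$, and since each term of $I$ is multiplied by a power of $a_n/a\to1$ we get $I(\sqrt{a_n/a}\,u)\to I(u)\le m(a)+\varepsilon$. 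For the reverse bound I would take near-minimizers $u_n\in\mathbf V(a_n)$, note that for $n$ large $a_n\le a'<a_0$ so the same energy-sign argument keeps $\|\nabla u_n\|_2^2$ uniformly below $\rho_0$, rescale them back into $S(a)$, and pass to the limit to obtain $m(a)\le\liminf_n m(a_n)$. The only delicate point is that the rescaled competitors remain inside the ball $\mathbf B_{\rho_0}$, which is exactly what the uniform gradient bound coming from $I<0$ provides.

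\textbf{Part (ii).} I would first reduce the subadditivity to the assertion that $\tau\mapsto m(\tau)/\tau$ is non-increasing on $(0,a_0)$: granting this, for $\alpha\in(0,a)$ one has $m(\alpha)\ge\frac{\alpha}{a}m(a)$ and $m(a-\alpha)\ge\frac{a-\alpha}{a}m(a)$, and adding these gives $m(\alpha)+m(a-\alpha)\ge m(a)$. The mechanism behind the monotonicity is the scaling estimate
\begin{equation*}
  m(\tau_2)\le\frac{\tau_2}{\tau_1}\,m(\tau_1),\qquad \tau_1<\tau_2,
\end{equation*}
which I would establish by applying $v=\sqrt{\theta}\,u$ with $\theta=\tau_2/\tau_1>1$ to a near-minimizer $u$ of $m(\tau_1)$. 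Since
\begin{align*}
  I(v)=\theta\Big[&\tfrac12\|\nabla u\|_2^2+\tfrac12\int_{\mathbb R^d}V u^2\,dx\\
  &-\tfrac{\theta^{q-1}}{2q}\int_{\mathbb R^d}(I_\alpha*|u|^q)|u|^q\,dx-\tfrac{\theta^{2_\alpha^*-1}}{2\cdot 2_\alpha^*}\int_{\mathbb R^d}(I_\alpha*|u|^{2_\alpha^*})|u|^{2_\alpha^*}\,dx\Big]
\end{align*}
and $\theta>1$, $q>1$, $2_\alpha^*>1$ with both Choquard integrals nonnegative, the bracket is at most $I(u)$, whence $I(v)\le\theta I(u)\le\theta(m(\tau_1)+\varepsilon)$; letting $\varepsilon\to0$ yields the estimate.

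\textbf{The main obstacle} is that $v$ must be admissible for $m(\tau_2)$, i.e. $\|\nabla v\|_2^2=\theta\|\nabla u\|_2^2<\rho_0$, and this is exactly where the local nature of $\mathbf V(\tau_2)$ bites. I would handle it as follows. Writing $\rho_-(\tau)$ for the smaller zero of $f(\tau,\cdot)$, the sign of the energy (as above) forces every near-minimizer of $m(\tau_1)$ to obey $\|\nabla u\|_2^2<\rho_-(\tau_1)$; moreover $\tau\mapsto f(\tau,\cdot)$ is non-increasing, so $\rho_-$ is non-decreasing and $\rho_-(\tau_1)\le\rho_-(a)<\rho_0$ for all $\tau_1\le a$. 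Fixing $\delta_0:=\rho_0/\rho_-(a)-1>0$, the competitor $v$ then lies in $\mathbf V(\tau_2)$ whenever $\tau_2/\tau_1\le 1+\delta_0$, so the scaling estimate holds on every short step with $\delta_0$ uniform over all masses $\le a$. Chaining finitely many geometric steps $\tau_1=s_0<s_1<\cdots<s_k=\tau_2$ with $s_{j+1}/s_j\le 1+\delta_0$, and using $m<0$ to multiply the inequalities, upgrades this to arbitrary $\tau_1<\tau_2\le a$, which is precisely the desired monotonicity. Finally, if $m(\alpha)$ (or, symmetrically, $m(a-\alpha)$) is attained by some $u\not\equiv0$, both Choquard integrals above are strictly positive, so for $\theta>1$ the bracket is strictly below $I(u)$ and the first short step out of $\alpha$ becomes strict; propagating the strict inequality through the chain gives $m(\alpha)>\frac{\alpha}{a}m(a)$, and the summation then produces the strict subadditivity $m(a)<m(\alpha)+m(a-\alpha)$.
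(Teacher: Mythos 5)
Your proof is correct, and its core is the same as the paper's: part (i) uses the rescaling $u\mapsto\sqrt{a_n/a}\,u$ together with the energy-sign argument ($I<0$ plus Lemma \ref{L1.3} forces $f<0$) to keep competitors inside $\mathbf{B}_{\rho_0}$, and part (ii) rests on the scaling inequality $m(\theta\alpha)\le\theta m(\alpha)$ (the paper's \eqref{eq1.10}), proved with the same competitor $v=\sqrt{\theta}\,u$, the same factorization of $I(v)$, and the same source of strictness when the infimum is attained. Where you genuinely deviate is in making $v$ admissible: you bound near-minimizers by the smaller zero of $f$, $\|\nabla u\|_2^2<\rho_-(\tau_1)\le\rho_-(a)$, which only permits short steps $\theta\le\rho_0/\rho_-(a)$, and you then chain finitely many geometric steps; the paper instead applies Lemma \ref{L1.2} with $(a_1,\rho_1)=(a,\rho_0)$ to get the mass-proportional bound $\|\nabla u\|_2^2<\frac{\alpha}{a}\rho_0$ for any negative-energy element of $\mathbf{V}(\alpha)$, which makes $v$ admissible for the whole range $\theta\in(1,a/\alpha]$ at once, so no chaining is needed. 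Your detour is sound --- $\rho_-$ is non-decreasing because $f$ is non-increasing in the mass, $\rho_-(a)<\rho_0$ since $f(a,\rho_0)>0$, and strictness survives multiplication by the positive ratios $s_{j+1}/s_j$ (incidentally, that step needs only positivity of these ratios, not the sign condition $m<0$ you invoke) --- but Lemma \ref{L1.2} already yields the sharper bound $\rho_-(\tau_1)\le\frac{\tau_1}{a}\rho_0$, which would collapse your chain to a single step and recover the paper's argument verbatim.
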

\begin{proof}
(i) Let $a \in(0, a_0)$ be arbitrary and $(a_n) \subset(0, a_0)$ be such that $a_n \rightarrow a$. From the definition of $m\left(a_n\right)$ and since $m\left(a_n\right)<0$(see Lemma \ref{L1.4}), for any $\varepsilon>0$ sufficiently small, there exists $u_n \in \mathbf{V}\left(a_n\right)$ such that
\begin{equation}\label{eq1.8}
  I\left(u_n\right) \leq m\left(a_n\right)+\varepsilon \text { and } I\left(u_n\right)<0 .
\end{equation}
Set $y_n:=\sqrt{\frac{a}{a_n}} u_n$ and hence $y_n \in S(a)$. We have that $y_n \in \mathbf{V}(a)$. In fact, if $a_n \geq a$, then
$$
\left\|\nabla y_n\right\|_2^2=\frac{a}{a_n}\left\|\nabla u_n\right\|_2^2 \leq\left\|\nabla u_n\right\|_2^2<\rho_0 .
$$
If $a_n<a$, we have $f\left(a_n, \rho\right) \geq 0$ for any $\rho \in\left[\frac{a_n}{a} \rho_0, \rho_0\right]$ by Lemma \ref{L1.2}. Hence, it follows from Lemma \ref{L1.3} and \eqref{eq1.8} that $f\left(a_n,\left\|\nabla u_n\right\|_2^2\right)<0$, thus $\left\|\nabla u_n\right\|_2^2<\frac{a_n}{a} \rho_0$ and
$$
\left\|\nabla y_n\right\|_2^2=\frac{a}{a_n}\left\|\nabla u_n\right\|_2^2<\frac{a}{a_n}\cdot\frac{a_n}{a} \rho_0=\rho_0 .
$$
Since $y_n \in \mathbf{V}(a)$, we can write
$$
m(a) \leq I\left(y_n\right)=I\left(u_n\right)+\left[I\left(y_n\right)-I\left(u_n\right)\right],
$$
where
\begin{eqnarray*}
&&I\left(y_n\right)-I\left(u_n\right)\\
&=&\frac{1}{2}\left(\frac{a}{a_n}-1\right)\left\|\nabla u_n\right\|_2^2-\frac{1}{2q}\left[\left(\frac{a}{a_n}\right)^{q}-1\right]\int_{\mathbb{R}^d}(I_{\alpha}\ast|u_n|^{q})|u_n|^{q}dx\\
&&+\frac{1}{2}\left[\frac{a}{a_n}-1\right]\int_{\mathbb{R}^d}V(x)u_n^2dx-\frac{1}{2\cdot2_\alpha^*}\left[\left(\frac{a}{a_n}\right)^{2_\alpha^*}-1\right]\int_{\mathbb{R}^d}(I_{\alpha}\ast|u_n|^{2_{\alpha}^*})|u_n|^{2_{\alpha}^*}dx.
\end{eqnarray*}
Since $\left\|\nabla u_n\right\|_2^2<\rho_0$, then $\int_{\mathbb{R}^d}(I_{\alpha}\ast|u_n|^{q})|u_n|^{q}dx$ and $\int_{\mathbb{R}^d}(I_{\alpha}\ast|u_n|^{2_{\alpha}^*})|u_n|^{2_{\alpha}^*}dx$ are uniformly bounded. Hence, we have
\begin{equation}\label{eq1.9}
  m(a) \leq I\left(y_n\right)=I\left(u_n\right)+o_n(1)
\end{equation}
as $n \rightarrow \infty$. Combining \eqref{eq1.8} and \eqref{eq1.8}, we have
$$
m(a) \leq m\left(a_n\right)+\varepsilon+o_n(1).
$$
Now, let $u \in \mathbf{V}(a)$ be such that
$$
I(u) \leq m(a)+\varepsilon  \text { and }   I(u)<0 .
$$
Set $u_n:=\sqrt{\frac{a_n}{a}} u$ and hence $u_n \in S\left(a_n\right)$. Clearly, $\|\nabla u\|_2^2<\rho_0$ and $a_n \rightarrow a$ imply $\left\|\nabla u_n\right\|_2^2<\rho_0$ for $n$ large enough, so that $u_n \in \mathbf{V}\left(a_n\right)$ and $I\left(u_n\right) \rightarrow I(u)$. So we have
$$
m\left(a_n\right) \leq I\left(u_n\right)=I(u)+\left[I\left(u_n\right)-I(u)\right] \leq m(a)+\varepsilon+o_n(1) .
$$
Therefore, since $\varepsilon>0$ is arbitrary, we deduce that $m\left(a_n\right) \rightarrow m(a)$.

(ii) Fixed $\alpha \in(0, a)$, it is sufficient to prove that the following holds
\begin{equation}\label{eq1.10}
  \forall \theta \in\left(1, \frac{a}{\alpha}\right]: m(\theta \alpha) \leq \theta m(\alpha)
\end{equation}
and if $m(\alpha)$ is reached, the inequality is strict. In fact, if \eqref{eq1.10} holds then we have
\begin{eqnarray*}
m(a) & =&\frac{a-\alpha}{a} m(a)+\frac{\alpha}{a} m(a)\\
&=&\frac{a-\alpha}{a} m\left(\frac{a}{a-\alpha}(a-\alpha)\right)+\frac{\alpha}{a} m\left(\frac{a}{\alpha} \alpha\right) \\
& \leq& m(a-\alpha)+m(\alpha)
\end{eqnarray*}
with a strict inequality if $m(\alpha)$ is reached. In order to obtain that \eqref{eq1.10}, note that in view of Lemma \ref{L1.4}, there exists a $u \in V(\alpha)$ such that
\begin{equation}\label{eq1.11}
I(u) \leq m(\alpha)+\varepsilon \text { and } I(u)<0
\end{equation}
for any $\varepsilon>0$ sufficiently small. By using Lemma \ref{L1.2}, $f(\alpha, \rho) \geq 0$ for any $\rho \in\left[\frac{\alpha}{a} \rho_0, \rho_0\right]$. Hence, it follows from Lemma \ref{L1.3} and \eqref{eq1.11} that
\begin{equation}\label{eq1.12}
\|\nabla u\|_2^2<\frac{\alpha}{a} \rho_0 .
\end{equation}
Now, consider $v=\sqrt{\theta} u$. We first note that $\|v\|_2^2=\theta\|u\|_2^2=\theta \alpha$ and also, $\|\nabla v\|_2^2=\theta\|\nabla u\|_2^2< \rho_0$ because of \eqref{eq1.12}. Thus $v \in \mathbf{V}(\theta \alpha)$ and we can write
\begin{eqnarray*}
m(\theta \alpha)  \leq I(v)&=&\frac{1}{2} \theta\|\nabla u\|_2^2+\frac{1}{2}\theta\int_{\mathbb{R}^d}V(x)u^2dx-\frac{1}{2q} \theta^{q}\int_{\mathbb{R}^d}(I_{\alpha}\ast|u|^{q})|u|^{q}dx\\
&&-\frac{1}{2\cdot2_\alpha^*} \theta^{2_\alpha^*}\int_{\mathbb{R}^d}(I_{\alpha}\ast|u|^{2_{\alpha}^*})|u|^{2_{\alpha}^*}dx \\
& <&\frac{1}{2} \theta\|\nabla u\|_2^2+\frac{1}{2}\theta\int_{\mathbb{R}^d}V(x)u^2dx-\frac{1}{2q} \theta \int_{\mathbb{R}^d}(I_{\alpha}\ast|u|^{q})|u|^{q}dx\\
&&-\frac{1}{2\cdot2_\alpha^*} \theta \int_{\mathbb{R}^d}(I_{\alpha}\ast|u|^{2_{\alpha}^*})|u|^{2_{\alpha}^*}dx \\
&=&\theta I(u) \leq \theta(m(\alpha)+\varepsilon).
\end{eqnarray*}
Since $\varepsilon>0$ is arbitrary, we have that $m(\theta \alpha) \leq \theta m(\alpha)$. If $m(\alpha)$ is reached then we can let $\varepsilon=0$ in \eqref{eq1.11} and thus the strict inequality follows.
\end{proof}
\begin{lemma}\label{L1.6}
Let $\left(v_n\right) \subset \mathbf{B}_{\rho_0}$ be such that $\int_{\mathbb{R}^d}(I_{\alpha}\ast|u|^{q})|u|^{q}dx \rightarrow 0$. Then there exists a $\beta_0>0$ such that
$$
I\left(v_n\right) \geq \beta_0\left\|\nabla v_n\right\|_2^2+o_n(1).
$$
\end{lemma}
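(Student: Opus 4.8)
The plan is to bound $I(v_n)$ from below term by term, absorbing the potential into the kinetic energy via the Sobolev inequality, discarding the subcritical Choquard term through the hypothesis, and dominating the critical Choquard term by $\|\nabla v_n\|_2^2$ using the definition of $S_\alpha$ together with the constraint $v_n\in\mathbf{B}_{\rho_0}$; the only real point is then to check that the leftover coefficient is strictly positive.

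First I would write
$$I(v_n)=\tfrac12\|\nabla v_n\|_2^2+\tfrac12\int_{\mathbb{R}^d}V(x)v_n^2\,dx-\tfrac{1}{2q}\int_{\mathbb{R}^d}(I_\alpha\ast|v_n|^q)|v_n|^q\,dx-\tfrac{1}{2\cdot2_\alpha^*}\int_{\mathbb{R}^d}(I_\alpha\ast|v_n|^{2_\alpha^*})|v_n|^{2_\alpha^*}\,dx.$$
For the potential term, H\"older's inequality with exponents $\frac{d}{2},\frac{d}{d-2}$ together with the Sobolev inequality gives
$$\int_{\mathbb{R}^d}V(x)v_n^2\,dx\ge-\int_{\mathbb{R}^d}V_-(x)v_n^2\,dx\ge-\|V_-\|_{\frac d2}\|v_n\|_{2^*}^2\ge-\|V_-\|_{\frac d2}\mathcal{S}^{-1}\|\nabla v_n\|_2^2,$$
so that the first two terms together are at least $\tfrac12(1-\|V_-\|_{\frac d2}\mathcal{S}^{-1})\|\nabla v_n\|_2^2$. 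The third term is precisely the quantity assumed to tend to $0$ (read with $v_n$ in place of $u$), hence it is $o_n(1)$.

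For the critical term the definition of $S_\alpha$ yields $\int(I_\alpha\ast|v_n|^{2_\alpha^*})|v_n|^{2_\alpha^*}\,dx\le S_\alpha^{-2_\alpha^*}\|\nabla v_n\|_2^{2\cdot2_\alpha^*}$. Writing $\|\nabla v_n\|_2^{2\cdot2_\alpha^*}=\|\nabla v_n\|_2^2\,(\|\nabla v_n\|_2^2)^{2_\alpha^*-1}$ and using $2_\alpha^*-1>0$ together with $\|\nabla v_n\|_2^2<\rho_0$, this term is bounded below by $-\tfrac{1}{2\cdot2_\alpha^*}S_\alpha^{-2_\alpha^*}\rho_0^{2_\alpha^*-1}\|\nabla v_n\|_2^2$. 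Collecting the four estimates gives $I(v_n)\ge\beta_0\|\nabla v_n\|_2^2+o_n(1)$ with
$$\beta_0:=\tfrac12\big(1-\|V_-\|_{\frac d2}\mathcal{S}^{-1}\big)-\tfrac{1}{2\cdot2_\alpha^*}S_\alpha^{-2_\alpha^*}\rho_0^{2_\alpha^*-1}.$$

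The main (and only substantive) step is to verify $\beta_0>0$, and this is forced by the very definition of the threshold. Since $\rho_0=\rho_{a_0}$ is the global maximizer of $f_{a_0}$ and $f(a_0,\rho_0)=0$ by the choice of $a_0$ (Lemma \ref{L1.1}), spelling out $f(a_0,\rho_0)=0$ gives
$$\tfrac12\big(1-\|V_-\|_{\frac d2}\mathcal{S}^{-1}\big)-\tfrac{1}{2\cdot2_\alpha^*S_\alpha^{2_\alpha^*}}\rho_0^{2_\alpha^*-1}=\tfrac{C_{d,q,\alpha}\,a_0^{\frac{2q-d(q-2)-\alpha}{2}}}{2q}\rho_0^{\frac{d(q-2)+\alpha-2}{2}}.$$
The left-hand side is exactly $\beta_0$, while the right-hand side is a product of strictly positive factors; hence $\beta_0>0$, which completes the proof. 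I expect no genuine difficulty here beyond keeping track of the exponents and the inequality directions; the structural idea is that the $\mathbf{B}_{\rho_0}$ constraint is tailored so that the critical term can never consume more kinetic energy than the defining identity of $a_0$ permits.
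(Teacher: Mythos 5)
Your proposal is correct and follows essentially the same route as the paper: absorb the potential via H\"older and the Sobolev inequality, drop the subcritical Choquard term as $o_n(1)$, bound the critical term by $S_\alpha^{-2_\alpha^*}\rho_0^{2_\alpha^*-1}\|\nabla v_n\|_2^2$ using $v_n\in\mathbf{B}_{\rho_0}$, and then identify $\beta_0$ as strictly positive from the identity $f(a_0,\rho_0)=0$. The paper's proof is word-for-word the same computation, including the same final expression for $\beta_0$.
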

\begin{proof}
Indeed, using the Sobolev inequality, we have
\begin{eqnarray*}
I\left(v_n\right) & =&\frac{1}{2}\|\nabla v_n\|_2^2+\frac{1}{2}\int_{\mathbb{R}^d}V(x)v_n^2dx-\frac{1}{2\cdot2_\alpha^*}   \int_{\mathbb{R}^d}(I_{\alpha}\ast|v_n|^{2_{\alpha}^*})|v_n|^{2_{\alpha}^*}dx+o_n(1) \\
& \geq& \frac{1}{2}\left(1-\left\|V_{-}\right\|_{\frac{d}{2}} \mathcal{S}^{-1}\right)\left\|\nabla v_n\right\|_2^2-\frac{1}{2\cdot2_\alpha^*\cdot S_\alpha^{
 2_\alpha^* }}\left\|\nabla v_n\right\|_2^{2\cdot2_\alpha^*}+o_n(1) \\
& =&\left\|\nabla v_n\right\|_2^2\left[\frac{1}{2}\left(1-\left\|V_{-}\right\|_{\frac{d}{2}} \mathcal{S}^{-1}\right)-\frac{1}{2\cdot2_\alpha^*\cdot S_\alpha^{
 2_\alpha^* }}\left\|\nabla v_n\right\|_2^{2\cdot2_\alpha^*-2}\right]+o_n(1) \\
& \geq&\left\|\nabla v_n\right\|_2^2\left[\frac{1}{2}\left(1-\left\|V_{-}\right\|_{\frac{d}{2}} \mathcal{S}^{-1}\right)-\frac{1}{2\cdot2_\alpha^*\cdot S_\alpha^{
 2_\alpha^* }}\rho_0^{2_\alpha^*-1}\right]+o_n(1).
\end{eqnarray*}
Now, since $f\left(a_0, \rho_0\right)=0$, we have that
\begin{equation*}
  \beta_0:=\frac{1}{2}\left(1-\left\|V_{-}\right\|_{\frac{d}{2}} \mathcal{S}^{-1}\right)-\frac{1}{2\cdot2_\alpha^*\cdot S_\alpha^{
 2_\alpha^* }}\rho_0^{2_\alpha^*-1}=\frac{C_{d, q,\alpha}  a_0^{\frac{2 q-d(q-2)-\alpha}{2}}}{2q}\rho_0^{\frac{d(q-2)+\alpha-2}{2}}>0,
\end{equation*}
which completes the proof.
\end{proof}
\begin{lemma}\label{L1.7}
For any $a \in\left(0, \varepsilon_0\right)$, let $\left(u_n\right) \subset \mathbf{B}_{\rho_0}$ be such that $\left\|u_n\right\|_2^2 \rightarrow a$ and $I\left(u_n\right) \rightarrow m(a)$. Then, there exist a $\beta_1>0$ and a sequence $\left(y_n\right) \subset \mathbb{R}^d$ such that
\begin{equation}\label{eq1.13}
  \int_{B\left(y_n, R\right)}\left|u_n\right|^2 d x \geq \beta_1>0  \text { for some } R>0.
\end{equation}
\end{lemma}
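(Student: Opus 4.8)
The plan is to argue by contradiction, exploiting the strict negativity $m(a)<0$ established in Lemma~\ref{L1.4} together with the coercivity estimate of Lemma~\ref{L1.6}. The underlying principle is that a negative energy level cannot coexist with the lower bound provided by Lemma~\ref{L1.6}; reconciling the two forces a definite amount of $L^2$-mass to be trapped in a ball of fixed radius, which is exactly the conclusion \eqref{eq1.13}. The device that connects the failure of such concentration (\emph{vanishing}) to the smallness of the nonlocal subcritical term is Lions' vanishing lemma.

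First I would suppose, for contradiction, that \eqref{eq1.13} fails for every choice of $R$, $\beta_1$ and $(y_n)$. This is precisely the statement that vanishing occurs: for every $R>0$,
$$
\sup_{y\in\mathbb{R}^d}\int_{B(y,R)}|u_n|^2\,dx\to 0 \quad \text{as } n\to\infty.
$$
Since $(u_n)\subset\mathbf{B}_{\rho_0}$ and $\|u_n\|_2^2\to a$, the sequence is bounded in $H^1(\mathbb{R}^d)$ (its gradient norm is controlled by $\rho_0$). Hence Lions' vanishing lemma applies and yields $u_n\to 0$ strongly in $L^p(\mathbb{R}^d)$ for every $p\in(2,2^*)$.

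The key step is to transfer this convergence to the subcritical Choquard term. By the Hardy--Littlewood--Sobolev estimate \eqref{eq2.1},
$$
\int_{\mathbb{R}^d}\left(I_\alpha\ast|u_n|^q\right)|u_n|^q\,dx\leq C(d,\alpha)\,\|u_n\|_{\frac{2dq}{2d-\alpha}}^{2q}.
$$
For $q$ in the admissible range $\frac{2d-\alpha}{d}<q<\frac{2d-\alpha+2}{d}$ the exponent $p_q:=\frac{2dq}{2d-\alpha}$ satisfies $2<p_q<2^*$: the lower endpoint $q=\frac{2d-\alpha}{d}$ gives $p_q=2$, while the upper endpoint gives $p_q=2+\frac{4}{2d-\alpha}<2+\frac{4}{d-2}=2^*$ since $\alpha<d<d+2$. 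Consequently $\|u_n\|_{p_q}\to 0$, so that
$$
\int_{\mathbb{R}^d}\left(I_\alpha\ast|u_n|^q\right)|u_n|^q\,dx\to 0.
$$
This is exactly the hypothesis of Lemma~\ref{L1.6}, which then gives $I(u_n)\geq\beta_0\|\nabla u_n\|_2^2+o_n(1)\geq o_n(1)$, whence $\liminf_{n\to\infty}I(u_n)\geq 0$. But $I(u_n)\to m(a)<0$ by Lemma~\ref{L1.4}, a contradiction. Therefore vanishing cannot occur, and passing to a subsequence we obtain $R>0$, $\beta_1>0$ and $(y_n)\subset\mathbb{R}^d$ realizing \eqref{eq1.13}.

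The main obstacle I anticipate is the verification that the exponent $p_q=\frac{2dq}{2d-\alpha}$ genuinely lies in the open interval $(2,2^*)$ throughout the admissible range of $q$ --- this is what makes the nonlocal term $L^2$-subcritical and allows Lions' lemma to annihilate it --- and, more technically, ensuring that the vanishing lemma is applied to a sequence bounded in $H^1$ (guaranteed here by $\|\nabla u_n\|_2^2<\rho_0$ together with $\|u_n\|_2^2\to a$) rather than merely in $L^2$. Everything else is a routine concentration-compactness argument.
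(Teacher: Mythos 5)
Your proof is correct and follows essentially the same route as the paper: arguing by contradiction, invoking Lions' vanishing lemma (Lemma I.1 of \cite{PLL1984}) on the $H^1$-bounded sequence to kill the subcritical Choquard term via the Hardy--Littlewood--Sobolev bound \eqref{eq2.1}, and then using Lemma \ref{L1.6} to contradict $m(a)<0$. Your explicit verification that $p_q=\frac{2dq}{2d-\alpha}\in(2,2^*)$ throughout the admissible range of $q$ is a detail the paper only states implicitly, but the argument is otherwise identical.
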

\begin{proof}
By contradiction, assume that \eqref{eq1.13} does not hold. Since ( $u_n$ ) $\subset B_{\rho_0}$ and $\left\|u_n\right\|_2^2 \rightarrow a$, the sequence $\left(u_n\right)$ is bounded in $H^1(\mathbb{R}^d)$. From Lemma I.1 in \cite{PLL1984} and $2<q<2^*$, we deduce that $\int_{\mathbb{R}^d}(I_{\alpha}\ast|u|^{q})|u|^{q}dx \rightarrow 0$ as $n \rightarrow \infty$. Hence, Lemma \ref{L1.6} implies that $I\left(u_n\right) \geq o_n(1)$. This contradicts the fact that $m(a)<0$ and the lemma follows.
\end{proof}
\begin{lemma}\label{L1.8}
For any $a \in\left(0, a_0\right)$, if $\left(u_n\right) \subset \mathbf{B}_{\rho_0}$ is such that $\left\|u_n\right\|_2^2 \rightarrow a$ and $I\left(u_n\right) \rightarrow m(a)$ then, up to translation, $u_{n} \xrightarrow{H^1(\mathbb{R}^d)} u \in \mathcal{M}_a$. In particular the set $\mathcal{M}_c$ is compact in $H^1(\mathbb{R}^d), u_\rho$ to translation.
\end{lemma}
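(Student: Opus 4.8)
The plan is to run a concentration--compactness argument whose decisive ingredients are the strict subadditivity of $m$ from Lemma \ref{L1.5}(ii), the continuity from Lemma \ref{L1.5}(i), and the coercivity of Lemma \ref{L1.6}, combined with a Brezis--Lieb decomposition to exclude any loss of mass. First I would note that $(u_n)\subset\mathbf{B}_{\rho_0}$ gives $\|\nabla u_n\|_2^2<\rho_0$, which together with $\|u_n\|_2^2\to a$ makes $(u_n)$ bounded in $H^1(\mathbb{R}^d)$; hence, up to a subsequence, $u_n\rightharpoonup u$ weakly in $H^1$, $u_n\to u$ in $L^2_{\mathrm{loc}}$ and a.e. To force $u\neq0$ I would use the non-vanishing supplied by Lemma \ref{L1.7}: it produces $R>0$, $\beta_1>0$ and $(y_n)\subset\mathbb{R}^d$ with $\int_{B(y_n,R)}|u_n|^2\,dx\ge\beta_1$; replacing $u_n$ by the translate $u_n(\cdot+y_n)$ and relabelling, the local compact embedding yields $\int_{B(0,R)}|u|^2\,dx\ge\beta_1$, so $u\neq0$. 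Set $b:=\|u\|_2^2\in(0,a]$.

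Next I would split $u_n=u+w_n$ with $w_n\rightharpoonup0$. The quadratic terms decompose by weak convergence, $\|\nabla u_n\|_2^2=\|\nabla u\|_2^2+\|\nabla w_n\|_2^2+o_n(1)$ and $\|u_n\|_2^2=\|u\|_2^2+\|w_n\|_2^2+o_n(1)$, so $\|w_n\|_2^2\to a-b$. Each Choquard term splits through the nonlocal Brezis--Lieb lemma, giving $\int(I_\alpha*|u_n|^{s})|u_n|^{s}\,dx=\int(I_\alpha*|u|^{s})|u|^{s}\,dx+\int(I_\alpha*|w_n|^{s})|w_n|^{s}\,dx+o_n(1)$ for $s=q$ and $s=2_\alpha^*$. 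For the potential I would write $V=V_+-V_-$: since $V_-\in L^{d/2}$, the functional $v\mapsto\int V_-v^2\,dx$ is weakly continuous on $H^1$ (split $V_-$ into a bounded compactly supported part, treated by Rellich, and a tail of small $L^{d/2}$ norm, controlled by H\"older and the Sobolev inequality), so $\int V_-w_n^2\,dx\to0$, while $\int V_+v^2\,dx$ splits by the classical Brezis--Lieb lemma with weight $V_+$. Collecting these identities produces the energy decomposition $I(u_n)=I(u)+I(w_n)+o_n(1)$.

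The core step is to rule out dichotomy, i.e. to prove $b=a$. Since $u\neq0$, for large $n$ one has $\|\nabla w_n\|_2^2\le\|\nabla u_n\|_2^2-\|\nabla u\|_2^2+o_n(1)<\rho_0$, so $w_n\in\mathbf{B}_{\rho_0}$ with $\|w_n\|_2^2\to a-b$; thus $I(w_n)\ge m(\|w_n\|_2^2)$ and Lemma \ref{L1.5}(i) gives $\liminf_n I(w_n)\ge m(a-b)$. Likewise $\|\nabla u\|_2^2\le\rho_0$, and the case $\|\nabla u\|_2^2=\rho_0$ is impossible because Lemma \ref{L1.4} would then force $I(u)>0>m(a)$, contradicting the weak lower semicontinuity $I(u)\le\liminf_n I(u_n)=m(a)$; hence $u\in\mathbf{V}(b)$ and $I(u)\ge m(b)$. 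Passing to the limit in the decomposition yields $m(a)=I(u)+\lim_n I(w_n)\ge m(b)+m(a-b)\ge m(a)$, the last inequality being Lemma \ref{L1.5}(ii). Equality must hold throughout, so $I(u)=m(b)$ is attained; if $b<a$ this contradicts the strict subadditivity of Lemma \ref{L1.5}(ii), forcing $b=a$.

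With $b=a$ we obtain $u\in\mathbf{V}(a)$, and combining $I(u)\ge m(a)$ with $I(u)\le\liminf_n I(u_n)=m(a)$ gives $I(u)=m(a)$, i.e. $u\in\mathcal{M}_a$; moreover $\|w_n\|_2^2\to0$ and $\lim_n I(w_n)=0$. To upgrade to strong convergence I would show $\|\nabla w_n\|_2\to0$: estimate \eqref{eq2.1} and $\|w_n\|_2\to0$ force $\int(I_\alpha*|w_n|^q)|w_n|^q\,dx\to0$ (the exponent $2q-d(q-2)-\alpha$ being positive), so Lemma \ref{L1.6} applies to $(w_n)$ and gives $0=\lim_n I(w_n)\ge\beta_0\limsup_n\|\nabla w_n\|_2^2\ge0$, whence $\|\nabla w_n\|_2\to0$. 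Thus $u_n\to u$ in $H^1(\mathbb{R}^d)$ up to translation, and applying the whole scheme to an arbitrary sequence in $\mathcal{M}_a$ yields its compactness up to translation. I expect the main obstacle to be twofold: establishing the nonlocal Brezis--Lieb splitting for the \emph{critical} Choquard term together with its compatibility with the non-translation-invariant potential term, so that the translation furnished by Lemma \ref{L1.7} does not spoil the energy bookkeeping, and securing the borderline coercivity that recovers gradient convergence at the Sobolev-critical exponent.
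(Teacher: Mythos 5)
Your proposal is correct to the same degree as the paper's own proof and follows essentially the same route: translation via Lemma \ref{L1.7} to obtain a nonzero weak limit, a Brezis--Lieb splitting of mass, gradient and energy, strict subadditivity from Lemma \ref{L1.5}(ii) to exclude dichotomy (your equality-chain argument is just a reorganization of the paper's two-case argument distinguishing $I(u_a)>m(a_1)$ from $I(u_a)=m(a_1)$), and Lemma \ref{L1.6} to recover gradient convergence. The obstacle you flag at the end --- that the potential term destroys translation invariance, so replacing $u_n$ by its translate changes $I(u_n)$ --- is genuine, but the paper's proof silently assumes it away (it asserts $I(u_n)=I(u_n(\cdot-y_n))$ ``by the translational invariance''), so on this point your proposal matches the paper's level of rigor while being more candid about the difficulty.
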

\begin{proof}
It follows from Lemma \ref{L1.7} and Rellich compactness theorem that there exists a sequence $\left(y_n\right) \subset \mathbb{R}^d$ such that
$$
u_n\left(x-y_n\right) \rightharpoonup u_a \neq 0 \text { in } H^1(\mathbb{R}^d).
$$
Our aim is to prove that $w_n(x):=u_n\left(x-y_n\right)-u_a(x) \rightarrow 0$ in $H^1(\mathbb{R}^d)$. Clearly,
\begin{eqnarray*}
\left\|u_n\right\|_2^2 & =&\left\|u_n\left(x-y_n\right)\right\|_2^2\\
&=&\left\|u_n\left(x-y_n\right)-u_a(x)\right\|_2^2+\left\|u_a\right\|_2^2+o_n(1) \\
& =&\left\|w_n\right\|_2^2+\left\|u_a\right\|_2^2+o_n(1).
\end{eqnarray*}
Hence,
\begin{equation}\label{eq1.14}
  \left\|w_n\right\|_2^2=\left\|u_n\right\|_2^2-\left\|u_a\right\|_2^2+o_n(1)=a-\left\|u_a\right\|_2^2+o_n(1).
\end{equation}
By a similar argument,
\begin{equation}\label{eq1.15}
\left\|\nabla w_n\right\|_2^2=\left\|\nabla u_n\right\|_2^2-\left\|\nabla u_a\right\|_2^2+o_n(1).
\end{equation}
By the translational invariance, it holds
\begin{equation}\label{eq1.16}
I\left(u_n\right)=I\left(u_n\left(x-y_n\right)\right)=I\left(w_n\right)+I\left(u_a\right)+o_n(1) .
\end{equation}
Next, we claim that
$$
\left\|w_n\right\|_2^2 \rightarrow 0.
$$
In fact, denote $a_1:=\left\|u_a\right\|_2^2>0$. By \eqref{eq1.14}, if we show that $a_1=a$ then the claim follows. By contradiction, we assume that $a_1<a$. Using \eqref{eq1.14} and \eqref{eq1.15}, we have $\left\|w_n\right\|_2^2 \leq a$ and $\left\|\nabla w_n\right\|_2^2 \leq\left\|\nabla u_n\right\|_2^2<\rho_0$ for $n$ large enough. Thus, we obtain that $w_n \in \mathbf{V}(\left\|w_n\right\|_2^2)$ and $I\left(w_n\right) \geq m\left(\left\|w_n\right\|_2^2\right)$. Recording that $I\left(u_n\right) \rightarrow m(a)$, in view of \eqref{eq1.16}, we have
$$
m(a)=I\left(w_n\right)+I\left(u_a\right)+o_n(1) \geq m\left(\left\|w_n\right\|_2^2\right)+I\left(u_a\right)+o_n(1).
$$
Since the map $a \mapsto m(a)$ is continuous, we deduce that
\begin{equation}\label{eq1.17}
m(a) \geq m\left(a-a_1\right)+I\left(u_a\right).
\end{equation}
We also have that $u_a \in \mathbf{V}\left(a_1\right)$ by the weak limit. This implies that $I\left(u_a\right) \geq m\left(a_1\right)$. If $I\left(u_a\right)>m\left(a_1\right)$, then it follows from \eqref{eq1.17} and Lemma \ref{L1.5}(ii) that
$$
m(a)>m\left(a-a_1\right)+m\left(a_1\right) \geq m\left(a-a_1+a_1\right)=m(a),
$$
which is impossible. Hence, we have $I\left(u_a\right)=m\left(a_1\right)$, namely $u_a$ is a local minimizer on $\mathbf{V}\left(a_1\right)$. So, using Lemma \ref{L1.5}(ii) with the strict inequality, we deduce from \eqref{eq1.17} that
$$
m(a) \geq m\left(a-a_1\right)+I\left(u_a\right)=m\left(a-a_1\right)+m\left(a_1\right)>m\left(a-a_1+a_1\right)=m(a),
$$
which is impossible. Thus, the claim follows and from \eqref{eq1.14} we obtain that $\left\|u_a\right\|_2^2=a$.

Now, in order to complete the proof, we need to show that $\left\|\nabla w_n\right\|_2^2 \rightarrow 0$. Note that, by using \eqref{eq1.15} and $u_a \neq 0$, we have $\left\|\nabla w_n\right\|_2^2 \leq\left\|\nabla u_n\right\|_2^2<\rho_0$ for $n$ large enough, so $\left(w_n\right) \subset B_{\rho_0}$ and it is bounded in $H^1(\mathbb{R}^d)$. Then by using the Gagliardo-Nirenberg inequality and recalling $\left\|w_n\right\|_2^2 \rightarrow 0$, we also have $\left\|w_n\right\|_q^q \rightarrow 0$. Thus, it follows from Lemma \ref{L1.6} that
\begin{equation}\label{eq1.18}
  I\left(w_n\right) \geq \beta_0\left\|\nabla w_n\right\|_2^2+o_n(1) \text { where } \beta_0>0 .
\end{equation}
Now we remember that
$$
I\left(u_n\right)=I\left(u_a\right)+I\left(w_n\right)+o_n(1) \rightarrow m(a) .
$$
Since $u_a \in \mathbf{V}(a)$ by weak limit, we have that $I\left(u_a\right) \geq m(a)$ and hence $I\left(w_n\right) \leq o_n(1)$. Then we get that $\left\|\nabla w_n\right\|_2^2 \rightarrow 0$ by using \eqref{eq1.18}.
\end{proof}
\begin{proof}[\bf Proof of Theorem \ref{t1.1}]
The fact that if $\left(u_n\right) \subset V(a)$ is such that $I\left(u_n\right) \rightarrow m(a)$ then, up to translation, $u_n \rightarrow u \in M_a$ in $H^1(\mathbb{R}^d)$ follows from Lemma \ref{L1.8}. In particular, it insures the existence of a minimizer for $I(u)$ on $\mathbf{V}(a)$ and this minimizer is a ground state.
\end{proof}
\section{Orbital stability}
In this section we focus on the local existence of solutions to the following Cauchy problem
\begin{equation}\label{eq3.1}
  \left\{\begin{array}{l}
i \partial_t u+\Delta u -V(x)u+(I_{\alpha}\ast|u|^{q})|u|^{q-2}u+(I_{\alpha}\ast|u|^{2_{\alpha}^*})|u|^{2_{\alpha}^*-2}u=0,  \\
\left.u\right|_{t=0}=\varphi \in  H ^1(\mathbb{R}^d).
\end{array}\right.
\end{equation}
Next we give the notion of integral equation associated with \eqref{eq3.1}. In order to do that first we give another definition.
\begin{definition}\label{D3.1}
If $d \geq 3$ the pair $(m, n)$ is said to be admissible if
$$
\frac{2}{m}+\frac{d}{n}=\frac{d}{2},\ p, r \in[2, \infty].
$$
\end{definition}
Note that, we will work with two particular admissible pairs 
$$
\left(m_1, n_1\right):=\left(2q, \frac{2d q}{dq-2}\right) 
$$
and
$$
\left(m_2, n_2\right):=\left(2\cdot2_\alpha^*, \frac{2d\cdot2_\alpha^*}{d\cdot2_\alpha^*-2}\right) .
$$
Now, we introduce the spaces $\mathbf{Y}_T:=L_T^{m_1}L_x^{n_1}  \cap L_T^{m_2}L_x^{n_2}$ and $\mathbf{X}_T:=L_T^{m_1}W_x^{1,n_1} \cap L_T^{m_2}W_x^{1,n_2}$ equipped with the following norms:
$$
\|w\|_{\mathbf{Y}_T}=\|w\|_{L_T^{m_1}L_x^{n_1}}+\|w\|_{L_T^{m_2}L_x^{n_2}} \text { and } \|w\|_{\mathbf{X}_T}=\|w\|_{L_T^{m_1}W_x^{1,n_1}}+\|w\|_{L_T^{m_2}W_x^{1,n_2}},
$$
where $w(t, x)$ defined on $[0, T) \times \mathbb{R}^d$, we have defined
\begin{eqnarray*}
&& \|w(t, x)\|_{L_T^{m}L_x^{n}}=\left(\int_0^T\|w(t, \cdot)\|_{L^n}^m d t\right)^{\frac{1}{m}}, \\
&& \|w(t, x)\|_{L_T^{m}W_x^{1,n}}=\left(\int_0^T\|w(t, \cdot)\|_{W^{1, n}\left(\mathbb{R}^d\right)}^m d t\right)^{\frac{1}{m}} .
\end{eqnarray*}
The potential $V: \mathbb{R}^d \rightarrow \mathbb{R}$ is assumed to satisfy the following assumptions
\begin{equation}\label{eq3.2}
V \in \mathcal{K} \cap L^{\frac{d}{2}}
\end{equation}
and
\begin{equation}\label{eq3.3}
\left\|V_{-}\right\|_{\mathcal{K}}<d(d-2)\alpha(d),
\end{equation}
where $\alpha(d)$ denotes the volume of the unit ball in $\mathbb{R}^d(d\geq3)$, $\mathcal{K}$ is a class of Kato potentials with
$$
\|V\|_{\mathcal{K}}:=\sup\limits_{x \in \mathbb{R}^d} \int_{\mathbb{R}^d} \frac{|V(y)|}{|x-y|} d y
$$
and $V_{-}(x):=\min \{V(x), 0\}$ is the negative part of $V$. Conditions \eqref{eq3.2} and \eqref{eq3.3} are the key to obtain Strichartz estimates, see \cite{JWZY2024}. We defined the operator $\mathcal{L}$ as $\Delta-V$.
\begin{definition}\label{D3.2}
Let $T>0$. We say that $u(t, x)$ is an integral solution of the Cauchy problem \eqref{eq3.1} on the time interval $[0, T]$ if:

$(1)$ $u \in \mathcal{C}([0, T], H^1(\mathbb{R})) \cap \mathbf{X}_T$;

$(2)$ for all $t \in(0, T)$ it holds
 \begin{equation*}
   u(t)=e^{i t  \mathcal{L}} \varphi+i \int_0^t e^{i(t-s)  \mathcal{L}} \left((I_{\alpha}\ast|u|^{q})|u|^{q-2}u+(I_{\alpha}\ast|u|^{2_{\alpha}^*})|u|^{2_{\alpha}^*-2}u\right) d s.
 \end{equation*}
\end{definition}
In order to obtain the local existence result, we recall Strichartz estimates with potential.
\begin{proposition}(see \cite{{JWZY2024}})\label{P3.1}
 Let $d \geq 3$ then for every admissible pairs $(m, n)$ and $(\tilde{m}, \tilde{n})$, there exists a constant $C>0$ such that for every $T>0$, we have

$\mathrm{(i)}$ For every $\varphi \in L^2(\mathbb{R}^d)$, the function $t \mapsto e^{i t \mathcal{L}} \varphi$ belongs to $L_T^{m}L_x^{n} \cap \mathcal{C}([0, T], L^2(\mathbb{R}^d))$ and
\begin{equation}\label{eq3.4}
 \left\|e^{i t \mathcal{L}} \varphi\right\|_{L_T^{m}L_x^{n}} \leq C\|\varphi\|_2 .
\end{equation}

$\mathrm{(ii)}$ Let $F \in L_T^{\tilde{m}'}L_x^{\tilde{n}'}$, where we use a prime to denote conjugate indices. Then the function
$$
t \mapsto \Phi_F(t):=\int_0^t e^{i(t-s) \mathcal{L}} F(s) d s
$$
belongs to $L_T^{m}L_x^{n} \cap \mathcal{C}([0, T], L^2(\mathbb{R}^d))$ and
\begin{equation}\label{eq3.5}
\left\|\Phi_F\right\|_{L_T^{m}L_x^{n}} \leq C\|F\|_{L_T^{\tilde{m}'}L_x^{\tilde{n}'}} .
\end{equation}

$\mathrm{(iii)}$ For every $\varphi \in H^1(\mathbb{R}^d)$, the function $t \mapsto e^{i t \mathcal{L}} \varphi$ belongs to $L_T^{m}W_x^{1,n} \cap \mathcal{C}([0, T], H^1(\mathbb{R}^d))$ and
\begin{equation}\label{eq3.6}
\left\|e^{i t  \mathcal{L}} \varphi\right\|_{L_T^{m}W_x^{1,n}} \leq C\|\varphi\|_{H^1(\mathbb{R}^d)}.
\end{equation}
\end{proposition}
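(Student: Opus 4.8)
The plan is to deduce (i) and (ii) from the abstract Strichartz machinery of Keel and Tao, whose only inputs are an energy bound and a dispersive bound, and then to upgrade to the gradient estimate (iii) by commuting the propagator with a fractional power of $\mathcal{L}$. Since $V \in L^{d/2}$ and $\|V_-\|_{\mathcal{K}}$ is small by \eqref{eq3.2}--\eqref{eq3.3}, the operator $\mathcal{L} = \Delta - V$ is self-adjoint on $L^2(\mathbb{R}^d)$ (Kato--Rellich) and bounded above, so $\{e^{it\mathcal{L}}\}_{t\in\mathbb{R}}$ is a strongly continuous unitary group; in particular $\|e^{it\mathcal{L}}\varphi\|_2 = \|\varphi\|_2$, which is the required energy estimate and also yields, by density, the claimed continuity into $\mathcal{C}([0,T],L^2(\mathbb{R}^d))$.

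The decisive step, and the main obstacle, is the dispersive estimate
\[
\|e^{it\mathcal{L}}\|_{L^1 \to L^\infty} \leq C|t|^{-d/2}, \qquad t \neq 0.
\]
First I would recall the free dispersive estimate $\|e^{it\Delta}\|_{L^1\to L^\infty}\le (4\pi|t|)^{-d/2}$. Then I would expand the perturbed propagator by the Duhamel identity
\[
e^{it\mathcal{L}} = e^{it\Delta} + i\int_0^t e^{i(t-s)\Delta}\, V\, e^{is\mathcal{L}}\, ds,
\]
iterate it into a Neumann series, and estimate the $n$-th term. The Kato norm $\|V\|_{\mathcal{K}}$ is precisely the quantity that controls the convolution of $V$ against the resolvent kernel of the Laplacian, and the smallness \eqref{eq3.3} (whose threshold $d(d-2)\alpha(d)$ is the reciprocal of the constant in the Newtonian potential) makes the series converge; combined with the free decay this transfers the $|t|^{-d/2}$ bound to $e^{it\mathcal{L}}$. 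The absence of negative bound states and of a zero-energy resonance, again guaranteed by the smallness condition, is what rules out slower decay. This is where essentially all the real work lies, and it is exactly the content announced in the introduction as the \emph{new dispersion estimates in the case of Kato potential}.

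With the unitarity and the dispersive bound in hand, interpolation yields $\|e^{it\mathcal{L}}\|_{L^{p'}\to L^p}\le C|t|^{-d(\frac12-\frac1p)}$ for $2\le p\le\infty$, and the abstract Keel--Tao theorem delivers the homogeneous estimate \eqref{eq3.4} and the inhomogeneous dual estimate \eqref{eq3.5} for every admissible pair $(m,n)$ with $\frac{2}{m}+\frac{d}{n}=\frac{d}{2}$, including the endpoint $(2,\frac{2d}{d-2})$ since $d\ge3$; the $\mathcal{C}([0,T],L^2)$ regularity of $\Phi_F$ follows from the same estimates together with a standard approximation argument.

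For the gradient estimate (iii) the key is the equivalence of the inhomogeneous Sobolev norm built from $\mathcal{L}$ with the standard one, that is, the boundedness on $L^n$ of the Riesz-type transform $\nabla(1-\mathcal{L})^{-1/2}$ and of its inverse; since $1-\Delta \ge 1$ and $V$ is a small perturbation, $1-\mathcal{L}$ is positive and invertible, and this equivalence reduces once more to controlling $V$ by its $L^{d/2}$ and Kato norms. Granting the equivalence, I would note that $(1-\mathcal{L})^{1/2}$ commutes with $e^{it\mathcal{L}}$, apply \eqref{eq3.4} to $(1-\mathcal{L})^{1/2}\varphi$, and convert back to the $W^{1,n}$ and $H^1$ norms to obtain \eqref{eq3.6}, the continuity into $\mathcal{C}([0,T],H^1(\mathbb{R}^d))$ following as before. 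I expect the two genuinely nontrivial ingredients to be (a) the dispersive estimate and (b) the Riesz transform bound; both hinge on the smallness of $\|V_-\|_{\mathcal{K}}$ and on $V\in L^{d/2}$, consistent with the statement being quoted from \cite{JWZY2024}.
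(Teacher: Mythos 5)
The paper itself contains no proof of Proposition \ref{P3.1}: it is imported wholesale from the authors' preprint \cite{JWZY2024}, so the only internal comparison available is with the strategy announced in the introduction (``new dispersion estimates and dual methods in the case of Kato potential''), and your outline matches that announced route: unitarity of $e^{it\mathcal{L}}$ as the energy bound, a perturbed dispersive estimate, the abstract Keel--Tao theorem (endpoint included since $d\geq3$), and, for (iii), commutation of the square root of the shifted operator with the flow plus the Sobolev-norm equivalence --- the latter being precisely the paper's Proposition \ref{P3.2}, likewise quoted from \cite{JWZY2024}. Two of your side remarks are in fact sharper than the paper's own text: with $\mathcal{L}=\Delta-V$ the positive operator is $1-\mathcal{L}=1-\Delta+V$, whereas the paper writes $(1+\mathcal{L})^{s/2}$; and your reading of $d(d-2)\alpha(d)$ as the reciprocal of the Newtonian-potential constant (hence $4\pi$ when $d=3$, the Rodnianski--Schlag threshold) is correct, though it tacitly presumes the Kato kernel $|x-y|^{-(d-2)}$, while the paper's displayed definition uses $|x-y|^{-1}$; the two agree only for $d=3$. (A minor technical point: for $d=3,4$ self-adjointness of $\mathcal{L}$ should be arranged via the KLMN form sum rather than Kato--Rellich, since $V\in L^{d/2}$ need not be operator-bounded relative to $\Delta$ in those dimensions.)

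The genuine gap sits exactly where you yourself locate ``all the real work'': the dispersive estimate is asserted rather than proved, and the Neumann-series scheme you sketch does not extend to all $d\geq3$. The Rodnianski--Schlag term-by-term bound is a genuinely three-dimensional computation: the time integral of composed free kernels is evaluated exactly by a Fresnel identity, which is what produces the weights $|x-z|^{-1}|z-y|^{-1}$ matched to the Kato norm and the geometric factor $\left(\|V\|_{\mathcal{K}}/4\pi\right)^n$. In dimensions $d\geq4$ the analogous estimate is false in general: by the Goldberg--Visan counterexample there exist bounded, compactly supported potentials --- with finite, and after rescaling the amplitude arbitrarily small, Kato norm --- for which already a fixed term of the Born series fails the $|t|^{-d/2}$ bound, so smallness of $\|V_-\|_{\mathcal{K}}$ alone cannot ``transfer the free decay'' as you claim; some further input (regularity of $V$, or the radial structure signaled in the very title of \cite{JWZY2024}) is required. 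If one only wants the Strichartz inequalities (i)--(ii), the pointwise dispersive bound can be bypassed altogether: run Keel--Tao for $e^{it\Delta}$ and absorb the potential term in Strichartz norms via the double-endpoint H\"older estimate $\|Vu\|_{L^2_TL^{2d/(d+2)}_x}\leq\|V\|_{L^{d/2}}\|u\|_{L^2_TL^{2d/(d-2)}_x}$, closing a fixed point under a smallness or spectral hypothesis on $V$; this is robust in every $d\geq3$. Finally, your conversion step in (iii) needs the norm equivalence on $L^n$ for every admissible $n\in\bigl[2,\tfrac{2d}{d-2}\bigr]$, but Proposition \ref{P3.2} with $s=1$ only covers $n<d$; when $d=3$ admissible exponents reach $6>3$, so ``every admissible pair'' is not obtained verbatim by this route --- a caveat that, notably, also touches the paper's own later use of the pair $(m_1,n_1)$ when $d=3$ and $q\leq2$.
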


\begin{proposition}\label{P3.2}(\cite{JWZY2024})
If  $V \in \mathcal{K} \cap L^{\frac{d}{2}}$ and $\left\|V_{-}\right\|_{\mathcal{K}}<d(d-2)\alpha(d)$, then
$$
\left\|\mathcal{L}^{\frac{s}{2}} f\right\|_{L^r} \sim\|f\|_{\dot{W}^{s, r}}, \quad\left\|(1+\mathcal{L})^{\frac{s}{2}} f\right\|_{L^r} \sim\|f\|_{W^{s, r}},
$$
where $1<r<\frac{d}{s}$ and $0 \leq s \leq 2$.
\end{proposition}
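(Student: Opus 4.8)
The plan is to reduce both stated equivalences to the $L^r$-boundedness of a single pair of composed operators. Writing $\mathcal{L} = -\Delta + V$ for the associated nonnegative Schr\"odinger operator, so that the fractional powers $\mathcal{L}^{s/2}$ are defined through the spectral theorem, the homogeneous equivalence $\|\mathcal{L}^{s/2}f\|_{L^r} \sim \|f\|_{\dot W^{s,r}} = \|(-\Delta)^{s/2}f\|_{L^r}$ is, after the substitutions $g = \mathcal{L}^{s/2}f$ and $g = (-\Delta)^{s/2}f$, equivalent to the boundedness on $L^r(\mathbb{R}^d)$ of
$$
T_1 := (-\Delta)^{s/2}\mathcal{L}^{-s/2}, \qquad T_2 := \mathcal{L}^{s/2}(-\Delta)^{-s/2}
$$
for $1 < r < d/s$. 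Since $T_1$ on $L^r$ is dual to an operator of the same type on $L^{r'}$, the whole statement rests on estimating these two Riesz-transform-type objects.

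The central input is Gaussian heat-kernel control for $\mathcal{L}$. The Kato-class hypothesis \eqref{eq3.2} ensures that each term in the Duhamel (Dyson--Phillips) expansion of $e^{-t\mathcal{L}}$ around the free heat semigroup $e^{t\Delta}$ is bounded, while \eqref{eq3.3}, which is precisely the statement $\|(-\Delta)^{-1}V_-\|_\infty < 1$ (the constant $d(d-2)\alpha(d)$ being the reciprocal of the Newtonian-potential constant), gives both nonnegativity of the quadratic form of $\mathcal{L}$ and summability of the series. The outcome is a pair of two-sided Gaussian bounds
$$
c\,t^{-d/2}e^{-|x-y|^2/(ct)} \le e^{-t\mathcal{L}}(x,y) \le C\,t^{-d/2}e^{-c|x-y|^2/t},
$$
which place $\mathcal{L}$ in the classical framework of operators admitting a H\"ormander-type spectral multiplier theorem on $L^r$, $1 < r < \infty$.

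Granted these bounds, I would represent the fractional powers through the subordination (Balakrishnan) formula
$$
\mathcal{L}^{s/2} = \frac{1}{|\Gamma(-s/2)|}\int_0^\infty \left(I - e^{-t\mathcal{L}}\right)\frac{dt}{t^{1+s/2}}, \qquad 0 < s < 2,
$$
together with the identical identity for $(-\Delta)^{s/2}$, and compare the two semigroups through the Duhamel relation $e^{-t\mathcal{L}} - e^{t\Delta} = -\int_0^t e^{-(t-\tau)\mathcal{L}}\,V\,e^{\tau\Delta}\,d\tau$. The Gaussian bounds on both kernels then force the difference kernels of $T_1 - I$ and $T_2 - I$ to satisfy Calder\'on--Zygmund estimates, hence these operators are bounded on $L^r$; the admissible range $1 < r < d/s$ is exactly the Hardy--Littlewood--Sobolev mapping range of the Riesz potential $\mathcal{L}^{-s/2}$. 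For the inhomogeneous statement I would repeat the argument with $(1+\mathcal{L})^{s/2}$ and $(1-\Delta)^{s/2}$: the unit shift moves the spectrum off $0$, regularizes the low frequencies, and removes the upper constraint there, yielding $\|(1+\mathcal{L})^{s/2}f\|_{L^r}\sim\|f\|_{W^{s,r}}$ on the stated range.

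The hard part will be making the perturbative comparison uniform down to the positivity threshold in \eqref{eq3.3}. That smallness is sharp for nonnegativity of $\mathcal{L}$, so near the threshold the Gaussian lower bound needed to invert $\mathcal{L}^{-s/2}$ degenerates and the Dyson--Phillips series converges only marginally; tracking the resulting constants, together with the endpoint behaviour as $r \to d/s$ where $\mathcal{L}^{-s/2}$ ceases to be bounded, is the delicate technical core. Once the semigroup difference is shown to produce a Calder\'on--Zygmund kernel uniformly in these parameters, the spectral multiplier theorem closes the argument.
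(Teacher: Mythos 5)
First, a point of reference: the paper contains no proof of Proposition \ref{P3.2} at all --- it is imported verbatim from the companion paper \cite{JWZY2024}, whose argument follows the by-now standard treatment of Kato-class potentials (going back to Hong's work on NLS with potential). So your sketch must be judged against that standard argument. Your opening reduction coincides with it and is correct: both equivalences amount to the $L^r$-boundedness of $T_1=(-\Delta)^{s/2}\mathcal{L}^{-s/2}$ and $T_2=\mathcal{L}^{s/2}(-\Delta)^{-s/2}$ (and their analogues with $1+\mathcal{L}$, $1-\Delta$). Your identification of the constant $d(d-2)\alpha(d)$ with the normalization of the Newtonian potential, so that \eqref{eq3.3} reads $\|(-\Delta)^{-1}V_-\|_\infty<1$, is also right (the exponent $1$ on $|x-y|$ in the paper's definition of $\|\cdot\|_{\mathcal{K}}$ is evidently a typo for $d-2$).

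The genuine gap is your central mechanism. You assert that the Duhamel comparison of the two semigroups, inserted into the subordination formula, makes $T_1-I$ and $T_2-I$ Calder\'on--Zygmund operators, ``hence bounded on $L^r$''. This cannot be made to work: a Calder\'on--Zygmund operator is bounded on every $L^r$, $1<r<\infty$, whereas the restriction $r<d/s$ in the proposition is sharp for this class of potentials. Already at the endpoint $s=2$ one has $\mathcal{L}(-\Delta)^{-1}=I+V(-\Delta)^{-1}$, and for $V\in L^{d/2}\cap\mathcal{K}$ with a local singularity $|x|^{-2+\varepsilon}$ and $f$ a smooth bump equal to $1$ near it, $Vf\notin L^r$ for $r\geq d/(2-\varepsilon)$ although $f\in W^{2,r}$; a fractional-integration version of the same example rules out $r>d/s$ for $0<s<2$. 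The reason the claim is unjustifiable is visible in the kernels: the Duhamel term carries a factor of $V$, which is merely Kato plus $L^{d/2}$ and has no smoothness, so the H\"ormander regularity condition required by Calder\'on--Zygmund theory is unavailable. Two further points: the two-sided Gaussian bound you invoke is stronger than what \eqref{eq3.2}--\eqref{eq3.3} give (lower bounds need extra decay assumptions) and is not needed; and the unit shift in the inhomogeneous case does \emph{not} remove the upper constraint $r<d/s$, which comes from the local singularity of $V$, not from low frequencies. The actual proof runs: (i) Kato smallness gives the Gaussian \emph{upper} bound for $e^{-t\mathcal{L}}$, hence $0\le\mathcal{L}^{-1}(x,y)\lesssim|x-y|^{2-d}$; (ii) that upper bound yields, via the Hebisch/Sikora--Wright multiplier theorem, $L^r$-boundedness of the imaginary powers $\mathcal{L}^{iy}$ with polynomial growth in $y$; (iii) the $s=2$ endpoints are computed by hand, $\mathcal{L}(-\Delta)^{-1}=I+V(-\Delta)^{-1}$ and $(-\Delta)\mathcal{L}^{-1}=I-V\mathcal{L}^{-1}$, each bounded on $L^r$ for $1<r<d/2$ by Hardy--Littlewood--Sobolev and H\"older with $V\in L^{d/2}$, respectively the kernel bound from (i); (iv) Stein's interpolation theorem for the analytic families $\mathcal{L}^{z/2}(-\Delta)^{-z/2}$ and $(-\Delta)^{z/2}\mathcal{L}^{-z/2}$ then gives boundedness exactly for $1<r<d/s$ --- the range arises by interpolating $(s=0,\ 1<r<\infty)$ against $(s=2,\ 1<r<d/2)$, not from an HLS mapping range for $\mathcal{L}^{-s/2}$. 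Finally, your worry about uniformity ``down to the threshold'' of \eqref{eq3.3} is a non-issue: the hypothesis is a strict inequality, and all constants are allowed to depend on the gap.
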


\begin{proposition}\label{P3.3}
Let $0<\alpha<d$ and $1<s, r, q<\infty$ be such that $\frac{1}{q}+\frac{1}{r}+\frac{1}{s}+\frac{\alpha}{d}=2$. Then,
$$
\left\|\left(I_\alpha * f\right) g\right\|_{r^{\prime}} \leq C(d, s, \alpha)\|f\|_s\|g\|_q, \quad \forall f \in L^s(\mathbb{R}^d), \forall g \in L^q(\mathbb{R}^d).
$$
\end{proposition}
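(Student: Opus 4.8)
The plan is to prove this generalized product Hardy--Littlewood--Sobolev estimate by duality, reducing it to the bilinear Hardy--Littlewood--Sobolev inequality already recorded above. The starting point is the observation that the exponent condition $\frac{1}{q}+\frac{1}{r}+\frac{1}{s}+\frac{\alpha}{d}=2$ can be rewritten by introducing an auxiliary exponent $t$ through $\frac{1}{t}:=\frac{1}{q}+\frac{1}{r}$; the hypothesis then becomes exactly $\frac{1}{s}+\frac{1}{t}+\frac{\alpha}{d}=2$, which is precisely the admissibility relation for the pair $(s,t)$ in the Hardy--Littlewood--Sobolev inequality. Before anything else I would verify that $t\in(1,\infty)$, i.e. that $\frac{1}{q}+\frac{1}{r}<1$, so that $t$ is a legitimate Hölder exponent and $(s,t)$ is an admissible HLS pair; this is the one place where the interplay of the hypotheses has to be checked, and it is the small technical point on which the argument hinges.

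Granting the admissibility, I would argue as follows. Since $1<r<\infty$, its conjugate $r'$ also lies in $(1,\infty)$ and $L^{r'}$ is the dual of $L^{r}$, so
\[
\bigl\|(I_\alpha * f)\,g\bigr\|_{r'}=\sup\Bigl\{\Bigl|\int_{\mathbb{R}^d}(I_\alpha * f)\,g\,h\,dx\Bigr|:\ h\in L^{r}(\mathbb{R}^d),\ \|h\|_{r}\le 1\Bigr\}.
\]
For a fixed such $h$, I would first apply Hölder's inequality with the split $\frac{1}{t}=\frac{1}{q}+\frac{1}{r}$ to the product $g\,h$, obtaining $\|g\,h\|_{t}\le\|g\|_{q}\|h\|_{r}$, and then apply the bilinear Hardy--Littlewood--Sobolev inequality to the pair $(f,\,g h)$, whose exponents $(s,t)$ satisfy $\frac{1}{s}+\frac{1}{t}+\frac{\alpha}{d}=2$ by construction. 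This yields
\[
\Bigl|\int_{\mathbb{R}^d}(I_\alpha * f)\,(g h)\,dx\Bigr|\le C(d,s,\alpha)\,\|f\|_{s}\,\|g h\|_{t}\le C(d,s,\alpha)\,\|f\|_{s}\,\|g\|_{q}\,\|h\|_{r}.
\]
Taking the supremum over all $h$ with $\|h\|_{r}\le 1$ gives the claimed bound $\bigl\|(I_\alpha * f)\,g\bigr\|_{r'}\le C(d,s,\alpha)\,\|f\|_{s}\,\|g\|_{q}$.

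The only genuine obstacle is the bookkeeping of exponents --- making sure $t$ falls in the admissible range $(1,\infty)$ --- together with justifying the duality formula, for which it suffices to test against the simple functions dense in $L^{r}$ and to note that finiteness of the supremum already forces $(I_\alpha*f)\,g\in L^{r'}$. Everything else is a direct chain of Hölder's inequality and the Hardy--Littlewood--Sobolev inequality stated above. I would remark in passing that an equivalent route avoids duality altogether: applying Hölder with $\frac{1}{r'}=\frac{1}{a}+\frac{1}{q}$ and then the mapping form of the Riesz potential $\|I_\alpha*f\|_{a}\le C\|f\|_{s}$ leads to the same exponent identity and the same conclusion. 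I prefer the duality argument only because it invokes directly the bilinear inequality already at our disposal.
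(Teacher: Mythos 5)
Your proof is correct and essentially identical to the paper's: both reduce the claim by $L^{r'}$--$L^{r}$ duality to the bilinear Hardy--Littlewood--Sobolev inequality applied to the pair $(f,\,gv)$ with the auxiliary exponent $\tfrac{1}{t}=\tfrac{1}{q}+\tfrac{1}{r}$ (equivalently $\tfrac{1}{s}+\tfrac{1}{t}+\tfrac{\alpha}{d}=2$), followed by H\"older's inequality $\|gv\|_{t}\le\|g\|_{q}\|v\|_{r}$. If anything you are more careful than the paper, which never remarks that the admissibility requirement $t>1$ (i.e.\ $\tfrac{1}{q}+\tfrac{1}{r}<1$, equivalently $\tfrac{1}{s}+\tfrac{\alpha}{d}>1$) is an extra constraint not implied by the stated hypotheses, although like the paper you assume it rather than verify it.
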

\begin{proof}
Using Hardy-Littlewood-Sobolev inequality for $\frac{1}{t}+\frac{1}{s}+\frac{\alpha}{d}=2$,
\begin{eqnarray*}
\left\|\left(I_\alpha * f\right) g\right\|_{r^{\prime}} & =&\sup _{\|v\|_r=1}\left|\int_{\mathbb{R}^d \times \mathbb{R}^d} I_\alpha(x-y) f(y) g(x) v(x) d y d x\right| \\
& =&C_\alpha \sup _{\|v\|_r=1}\left|\int_{\mathbb{R}^d \times \mathbb{R}^d} \frac{f(y) g(x) v(x)}{|x-y|^{d-\alpha}} d y d x\right| \\
& \leq& C(d, s, \alpha)\|f\|_s\|g v\|_t .
\end{eqnarray*}
By H\"older inequality, we have
 \begin{equation*}
   \left\|\left(I_\alpha * f\right) g\right\|_{r^{\prime}}  \leq C(d, s, \alpha)\|f\|_s\|g v\|_t
 \leq C(d, s, \alpha)\|f\|_s\|g\|_q\|v\|_r
  \leq C(d, s, \alpha)\|f\|_s\|g\|_q
 \end{equation*}
for $\frac{1}{q}+\frac{1}{r}+\frac{1}{s}+\frac{\alpha}{d}=2$.
\end{proof}

The following result will be useful in the sequel.

\begin{lemma}\label{L3.1}
Let $d \geq 3$ and $\frac{2d-\alpha}{d}<\beta \leq 2_\alpha^*$ be given. Then the couple $(m, n)$ defined as follows
$$
m:=2\beta \quad \text { and } \quad n:=\frac{2d\beta}{d\beta-2}
$$
is admissible. Moreover for every admissible couple $(\tilde{m}, \tilde{n})$ there exists a constant $C>0$ such that for every $T>0$ the following inequalities hold:
\begin{equation}\label{eq3.7}
\left\|\int_0^t e^{i(t-s)  \mathcal{L}}(1+\mathcal{L})^{\frac{1}{2}}(I_{\alpha}\ast|u|^{\beta})|u|^{\beta-2}u d s\right\|_{L_T^{\tilde{m}}L_x^{\tilde{n}}}  \leq C   \|u\|_{L_{T}^{m}W_x^{1,n}}^{2\beta-1},
\end{equation}
\begin{eqnarray}\label{eq3.8}
&&\left\|\int_0^t e^{i(t-s)  \mathcal{L}}(1+\mathcal{L})^{\frac{1}{2}}\left[(I_{\alpha}\ast|u|^{\beta})|u|^{\beta-2}u-(I_{\alpha}\ast|v|^{\beta})|v|^{\beta-2}v\right] d s\right\|_{L_T^{\tilde{m}}L_x^{\tilde{n}}} \nonumber\\
&\leq& C \left(\|u\|_{L_{T}^{m}W_x^{1,n}}^{2(\beta-1)}+\|v\|_{L_{T}^{m}W_x^{1,n}}^{2(\beta-1)}\right)\cdot \|u-v\|_{L_{T}^{m}W_x^{1,n}},
\end{eqnarray}
\end{lemma}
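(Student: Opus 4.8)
The plan is to handle the two displays \eqref{eq3.7} and \eqref{eq3.8} in parallel, reducing each to a pointwise--in--time spatial estimate and then integrating in time. First, the admissibility of $(m,n)=\left(2\beta,\frac{2d\beta}{d\beta-2}\right)$ is a direct computation: one checks
\[
\frac{2}{m}+\frac{d}{n}=\frac{1}{\beta}+\frac{d\beta-2}{2\beta}=\frac{d}{2},
\]
and the hypothesis $\frac{2d-\alpha}{d}<\beta\le 2_\alpha^*$ guarantees $\beta>1$, hence $m=2\beta\in(2,\infty)$, while $d\beta>2$ forces $n<\infty$ and $n\ge 2$ is automatic, so $(m,n)$ satisfies Definition~\ref{D3.1}.

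For \eqref{eq3.7} I would apply the inhomogeneous Strichartz estimate \eqref{eq3.5} of Proposition~\ref{P3.1} with output pair $(\tilde m,\tilde n)$ and source pair $(m,n)$, so that it remains to bound $\|F\|_{L_T^{m'}L_x^{n'}}$ with $F=(1+\mathcal L)^{\frac12}\big[(I_\alpha*|u|^{\beta})|u|^{\beta-2}u\big]$. By the norm equivalence of Proposition~\ref{P3.2} (with $s=1,\ r=n'$, which is legitimate since $\tfrac{d}{d-1}<n<\infty$), this is comparable to $\big\|(I_\alpha*|u|^{\beta})|u|^{\beta-2}u\big\|_{L_T^{m'}W_x^{1,n'}}$. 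The time integration is then painless: since $(2\beta-1)m'=m$, H\"older in $t$ yields
\[
\big\|(I_\alpha*|u|^{\beta})|u|^{\beta-2}u\big\|_{L_T^{m'}W_x^{1,n'}}\le C\,\Big\|\,\|u(t)\|_{W^{1,n}}^{2\beta-1}\Big\|_{L_T^{m'}}=C\,\|u\|_{L_T^{m}W_x^{1,n}}^{2\beta-1},
\]
once the pointwise--in--$t$ spatial bound $\big\|(I_\alpha*|u|^{\beta})|u|^{\beta-2}u\big\|_{W^{1,n'}}\le C\,\|u\|_{W^{1,n}}^{2\beta-1}$ is in hand.

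The core of the argument is therefore this spatial estimate. Differentiating the nonlinearity and using $|\nabla|u|^{\beta}|\lesssim|u|^{\beta-1}|\nabla u|$ and $|\nabla(|u|^{\beta-2}u)|\lesssim|u|^{\beta-2}|\nabla u|$ produces three schematic terms, $(I_\alpha*|u|^{\beta})|u|^{\beta-1}$, $(I_\alpha*(|u|^{\beta-1}|\nabla u|))|u|^{\beta-1}$ and $(I_\alpha*|u|^{\beta})|u|^{\beta-2}|\nabla u|$. To each I would apply Proposition~\ref{P3.3} with output exponent $n'$, placing every factor $\nabla u$ in $L^{n}$ and distributing the remaining pure powers of $u$ among Lebesgue exponents lying in the Sobolev window $[n,n^{*}]$, $\tfrac1{n^*}=\tfrac1n-\tfrac1d$, so that each such factor is controlled by $\|u\|_{W^{1,n}}$ through the Sobolev and Gagliardo--Nirenberg inequalities. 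The point is that the exponent identity $\frac1q+\frac1r+\frac1s+\frac\alpha d=2$ demanded by Proposition~\ref{P3.3} holds precisely because $n$ is the admissible partner of $m=2\beta$; after simplification the feasibility of the distribution reduces to the condition $\beta(d-2)\le 2d-\alpha$, i.e. $\beta\le 2_\alpha^*$, together with the constraints forcing the intermediate exponents to stay above $n$.

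I expect this exponent bookkeeping to be the main obstacle. The most rigid instances are the gradient terms and the upper-critical case $\beta=2_\alpha^*$, where the intermediate exponents are pinned to the endpoints $n$ and $n^{*}$ of the Sobolev window with no slack; the subcritical regime $\beta=q$ must instead be closed using the margin coming from $\frac{2d-\alpha}{d}<q<\frac{2d-\alpha+2}{d}$, and this is the delicate case to verify carefully. Finally, \eqref{eq3.8} follows from the same scheme after a telescoping step: the elementary inequalities
\[
\big||a|^{\beta-2}a-|b|^{\beta-2}b\big|\lesssim(|a|^{\beta-2}+|b|^{\beta-2})\,|a-b|,\qquad \big||a|^{\beta}-|b|^{\beta}\big|\lesssim(|a|^{\beta-1}+|b|^{\beta-1})\,|a-b|
\]
let one write the difference of the two nonlinearities as a sum of terms each carrying exactly one factor $u-v$ and $2(\beta-1)$ factors drawn from $u$ and $v$; rerunning the spatial estimate verbatim then reproduces the factor $\big(\|u\|_{L_T^m W_x^{1,n}}^{2(\beta-1)}+\|v\|_{L_T^m W_x^{1,n}}^{2(\beta-1)}\big)\,\|u-v\|_{L_T^m W_x^{1,n}}$.
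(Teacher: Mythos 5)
Your proposal follows the paper's proof step for step: inhomogeneous Strichartz with the dual source pair $(m',n')$, the norm equivalence of Proposition \ref{P3.2}, the Leibniz splitting into the same three terms, Proposition \ref{P3.3} combined with H\"older and Sobolev in space, H\"older in time through $(2\beta-1)m'=m$, and the same telescoping inequalities for \eqref{eq3.8}. The admissibility computation and the check that Proposition \ref{P3.2} applies with $r=n'$ are correct.

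The genuine gap is the step you defer: the exponent bookkeeping is not a routine verification but the entire analytic content of the lemma, and it cannot close the way you predict. First note that a $T$-uniform constant forces your choice of pairs: H\"older in time must be an equality, $(2\beta-1)\gamma'=m$, which gives $\gamma=m$ and then, by admissibility, the spatial source exponent $n$; so the estimate you must prove is $\bigl\|(I_\alpha*|u|^{\beta})|u|^{\beta-2}u\bigr\|_{W^{1,n'}}\le C\|u\|_{W^{1,n}}^{2\beta-1}$ pointwise in time. Now run your own scheme on, say, the term $(I_\alpha*|u|^{\beta})|u|^{\beta-2}\nabla u$: Proposition \ref{P3.3} with output $L^{n'}$ (hence $r=n$), the factor $\nabla u$ at $L^{n}$, and the pure powers of $u$ at exponents $\sigma_i$ with weights $k_i$, $\sum_i k_i=2(\beta-1)$. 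The hypothesis of Proposition \ref{P3.3}, after H\"older, pins down
\[
\sum_i\frac{k_i}{\sigma_i}\;=\;2-\frac{2}{n}-\frac{\alpha}{d}\;=\;\frac{(d-\alpha)\beta+2}{d\beta},
\]
while controlling each pure power by $\|u\|_{W^{1,n}}$ requires $\sigma_i\in[n,n^*]$ (your Sobolev window), whence
\[
\sum_i\frac{k_i}{\sigma_i}\;\le\;\frac{2(\beta-1)}{n}\;=\;\frac{(\beta-1)(d\beta-2)}{d\beta}.
\]
Comparing the two displays, feasibility is equivalent to $(\beta-1)(d\beta-2)\ge(d-\alpha)\beta+2$, i.e.\ $\beta\ge\frac{2d-\alpha+2}{d}$, and the other two terms give the same threshold. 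So your upper-window claim ($\sigma\le n^*\Leftrightarrow\beta\le 2_\alpha^*$) is right, but the lower-window constraint -- which you yourself impose -- fails on the whole subcritical range $\beta=q<\frac{2d-\alpha+2}{d}$ that the lemma must cover. The ``margin'' $\frac{2d-\alpha}{d}<q<\frac{2d-\alpha+2}{d}$ you hoped to exploit points in the wrong direction (the scheme needs $q\ge\frac{2d-\alpha+2}{d}$), and since the bound above is saturated at $\sigma_i=n$, no redistribution of exponents can repair it; the only escape is to give up the $T$-uniform constant and use a non-dual source pair, paying a positive power of $T$, which is not what the lemma asserts. You should also know that the paper's own write-up does not clear this hurdle: it places the pure powers in $L_x^{4d\beta(\beta-1)/[(3d-2\alpha)\beta+2]}$, and these exponents violate the identity of Proposition \ref{P3.3} (substituting them yields total $\frac{5d\beta-2}{2d\beta}\neq 2$ unless $d\beta=2$); they instead solve the identity with one factor $\frac1n$ omitted, i.e.\ they correspond to output $L^1$ rather than $L^{n'}$. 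In short, you reproduced the paper's strategy faithfully, and the step you flagged as delicate and left unverified is exactly the step at which both your plan and the paper's computation break down for subcritical $\beta$.
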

\begin{proof}
By direct calculations, one can check that $(m,n)$ is admissible. By using Strichartz estimates, Proposition \ref{P3.2} and Proposition \ref{P3.3}, it follows that
 \begin{eqnarray*}
&&\left\|\int_0^t e^{i(t-s)  \mathcal{L}}(1+\mathcal{L})^{\frac{1}{2}}(I_{\alpha}\ast|u|^{\beta})|u|^{\beta-2}u d s\right\|_{L_T^{\tilde{m}}L_x^{\tilde{n}}}\\
&\leq& C\left\| (1+\mathcal{L})^{\frac{1}{2}}(I_{\alpha}\ast|u|^{\beta})|u|^{\beta-2}u \right\|_{L_T^{m'}L_x^{n'}}\\
&\leq& C\left\| (I_{\alpha}\ast|u|^{q})|u|^{\beta-2}u \right\|_{L_{T}^{ m'}W_x^{1,n'}}\\
&\leq& C \left\| (I_{\alpha}\ast|u|^{\beta})|u|^{\beta-2}u \right\|_{L_{T}^{ m'}L_x^{n'}}+C\left\| (I_{\alpha}\ast|u|^{\beta})|u|^{\beta-2}\nabla u \right\|_{L_{T}^{ m'}L_x^{n'}} +C\left\| (I_{\alpha}\ast|u|^{\beta-1}\nabla u)|u|^{\beta-2} u \right\|_{L_{T}^{ m'}L_x^{n'}}\\
&\leq&C \left\| \|u^{\beta}\|_{L_x^{\frac{4d(\beta-1)}{(3d-2\alpha)\beta+2}}}\|u^{\beta-2}\|_{L_x^{\frac{4d\beta(\beta-1)}{[(3d-2\alpha)\beta+2](\beta-2)} }}\|u\|_{L_x^n} \right\|_{L_{T}^{ m'} }\\
&&+C\left\| \|u^{\beta}\|_{L_x^{\frac{4d(\beta-1)}{(3d-2\alpha)\beta+2}}}\|u^{\beta-2}\|_{L_x^{\frac{4d\beta(\beta-1)}{[(3d-2\alpha)\beta+2](\beta-2)}}}\|\nabla u\|_{L_x^n} \right\|_{L_{T}^{ m'} }\\
&&+C\left\| \|u^{\beta-1}\|_{L_x^{\frac{4d\beta}{(3d-2\alpha)\beta+2}}}\|\nabla u\|_{L_x^n}\|u^{\beta-2}u\|_{L_x^{\frac{4d\beta}{(3d-2\alpha)\beta+2}}} \right\|_{L_{T}^{ m'} }\\
&=&C\left\| \|u\|_{L_x^{\frac{4d\beta(\beta-1)}{(3d-2\alpha)\beta+2}}}^{2(\beta-1)} \|u\|_{L_x^n}\right\|_{L_{T}^{ m'} }+C\left\| \|u\|_{L_x^{\frac{4d\beta(\beta-1)}{(3d-2\alpha)\beta+2}}}^{2(\beta-1)} \|\nabla u\|_{L_x^n}\right\|_{L_{T}^{ m'} }+C\left\| \|u\|_{L_x^{\frac{4d\beta(\beta-1)}{(3d-2\alpha)\beta+2}}}^{2(\beta-1)} \|u\|_{L_x^n}\right\|_{L_{T}^{ m'} }\\
&\leq&C\left\| \|u\|_{L_x^{\frac{4d\beta(\beta-1)}{(3d-2\alpha)\beta+2}}}^{2(\beta-1)} \|u\|_{W_x^{1,n}}\right\|_{L_{T}^{ m'} }+C\left\| \|u\|_{L_x^{\frac{4d\beta(\beta-1)}{(3d-2\alpha)\beta+2}}}^{2(\beta-1)} \|  u\|_{W_x^{1,n}}\right\|_{L_{T}^{ m'} }+C\left\| \|u\|_{L_x^{\frac{4d\beta(\beta-1)}{(3d-2\alpha)\beta+2}}}^{2(\beta-1)} \|u\|_{W_x^{1,n}}\right\|_{L_{T}^{ m'} }\\
&\leq&C  \|u\|_{L_{T}^{ 2\beta}L_x^{\frac{4d\beta(\beta-1)}{(3d-2\alpha)\beta+2}}}^{2(\beta-1)}\cdot \|u\|_{L_{T}^{2\beta}W_x^{1,n}}\leq C \cdot \|u\|_{L_{T}^{2\beta}W_x^{1,n}}^{2\beta-1},
\end{eqnarray*}
which implies \eqref{eq3.7} holds. Similarly, we have
 \begin{eqnarray*}
&&\left\|\int_0^t e^{i(t-s)  \mathcal{L}}(1+\mathcal{L})^{\frac{1}{2}}\left[(I_{\alpha}\ast|u|^{\beta})|u|^{\beta-2}u-(I_{\alpha}\ast|v|^{\beta})|v|^{\beta-2}v\right] d s\right\|_{L_T^{\tilde{m}}L_x^{\tilde{n}}}\\
&\leq& C\left\| (1+\mathcal{L})^{\frac{1}{2}}\left[(I_{\alpha}\ast|u|^{\beta})|u|^{\beta-2}u-(I_{\alpha}\ast|v|^{\beta})|v|^{\beta-2}v\right] \right\|_{L_T^{m'}L_x^{n'}}\\
&\leq& C\left\|\left[(I_{\alpha}\ast|u|^{\beta})|u|^{\beta-2}u-(I_{\alpha}\ast|v|^{\beta})|v|^{\beta-2}v\right]\right\|_{L_{T}^{ m'}W_x^{1,n'}}\\
&\leq& C\left\|(I_{\alpha}\ast(|u|^{\beta}-|v|^{\beta}))|u|^{\beta-2}u\right\|_{L_{T}^{ m'}W_x^{1,n'}}+\left\|(I_{\alpha}\ast|v|^{\beta})(|v|^{\beta-2}v-|u|^{\beta-2}u)\right\|_{L_{T}^{ m'}W_x^{1,n'}}\\
&\leq& C\left\|(I_{\alpha}\ast(|u-v|(|u|^{\beta-1}+|v|^{\beta-1}))|u|^{\beta-2}u\right\|_{L_{T}^{ m'}W_x^{1,n'}}+C\left\|(I_{\alpha}\ast|v|^{\beta})|v-u|(|v|^{\beta-2}+|u|^{\beta-2})\right\|_{L_{T}^{ m'}W_x^{1,n'}}\\
&\leq&C  \left(\|u\|_{L_{T}^{ 2\beta}L_x^{\frac{4d\beta(\beta-1)}{(3d-2\alpha)\beta+2}}}^{2(\beta-1)}+\|v\|_{L_{T}^{ 2\beta}L_x^{\frac{4d\beta(\beta-1)}{(3d-2\alpha)\beta+2}}}^{2(\beta-1)}\right)\cdot \|u-v\|_{L_{T}^{2\beta}W_x^{1,n}}\\
&\leq& C \left(\|u\|_{L_{T}^{2\beta}W_x^{1,n}}^{2(\beta-1)}+\|v\|_{L_{T}^{2\beta}W_x^{1,n}}^{2(\beta-1)}\right)\cdot \|u-v\|_{L_{T}^{2\beta}W_x^{1,n}}
\end{eqnarray*}
since $W_x^{1,n}  \hookrightarrow L_x^{\frac{4d\beta(\beta-1)}{(3d-2\alpha)\beta+2}}$. This completes the proof of the lemma.
\end{proof}

\begin{lemma}\label{L3.2}
For all $1<m, n<\infty$, $L_T^mW_x^{1,n}$ is a separable reflexive Banach space.
\end{lemma}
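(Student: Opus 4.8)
The plan is to reduce the statement to two standard facts about Bochner spaces: first that the target space $W^{1,n}(\mathbb{R}^d)$ is itself separable and reflexive for $1<n<\infty$, and second that these two properties are inherited by $L^m\big([0,T];X\big)$ whenever $X$ is separable and reflexive and $1<m<\infty$. Throughout, $L_T^mW_x^{1,n}$ is read as the Bochner space $L^m\big([0,T];W^{1,n}(\mathbb{R}^d)\big)$.

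First I would verify that $W^{1,n}(\mathbb{R}^d)$ is separable and reflexive. The map $J:u\mapsto(u,\partial_1 u,\dots,\partial_d u)$ is a linear isometry of $W^{1,n}(\mathbb{R}^d)$ onto a closed subspace of the finite product $\big(L^n(\mathbb{R}^d)\big)^{d+1}$. Since $L^n(\mathbb{R}^d)$ is separable and reflexive for $1<n<\infty$, so is any finite product of copies of it, and every closed subspace of a separable reflexive Banach space is again separable and reflexive. Hence $W^{1,n}(\mathbb{R}^d)$ enjoys both properties, and we may set $X:=W^{1,n}(\mathbb{R}^d)$ in what follows.

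It remains to transfer these properties to the Bochner space. Completeness of $L^m\big([0,T];X\big)$ for any Banach space $X$ is classical, so it is a Banach space. For separability, I would fix a countable dense set $\{x_k\}\subset X$ and consider the countable family of simple functions $\sum_j \mathbf{1}_{E_j}x_{k_j}$, where the $E_j$ are finite unions of subintervals of $[0,T]$ with rational endpoints; approximating an arbitrary element first by $X$-valued simple functions and then replacing the values by nearby $x_k$ shows this family is dense. The delicate point is reflexivity. Because $X$ is reflexive, both $X$ and $X^*$ possess the Radon--Nikodym property, so the standard duality theorem for Bochner spaces applies: the pairing $\langle f,g\rangle=\int_0^T\langle f(t),g(t)\rangle\,dt$ identifies $\big(L^m([0,T];X)\big)^*$ isometrically with $L^{m'}([0,T];X^*)$, where $\frac1m+\frac1{m'}=1$. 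Applying this identification once more---now to $X^*$, which is itself reflexive---yields $\big(L^m([0,T];X)\big)^{**}\cong L^m([0,T];X^{**})=L^m([0,T];X)$, and one checks that under these isomorphisms the composite coincides with the canonical embedding; hence the embedding is surjective and $L^m([0,T];X)$ is reflexive.

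The main obstacle is precisely this reflexivity step, and within it the duality identification $\big(L^m([0,T];X)\big)^*\cong L^{m'}([0,T];X^*)$. This is not automatic for a general target: it can fail when $X^*$ lacks the Radon--Nikodym property. Here it is rescued by the reflexivity of $X=W^{1,n}(\mathbb{R}^d)$ established in the second step, which forces $X^*$ to have the Radon--Nikodym property. I would invoke the corresponding theorem from vector-measure theory (as in Diestel--Uhl) rather than reprove it, after which reflexivity follows from the double-duality computation above and separability from the density argument, completing the proof.
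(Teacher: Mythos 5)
Your proof is correct and in substance matches the paper's: the paper disposes of this lemma with a one-line citation to Phillips' theorem (Diestel--Uhl), which states precisely that $L^m([0,T];X)$ is reflexive when $X$ is reflexive and $1<m<\infty$, and your double-duality argument via the Radon--Nikodym property is the standard proof of that theorem, supplemented by the routine verification that $W^{1,n}(\mathbb{R}^d)$ is separable and reflexive and the density-of-simple-functions argument for separability. One point in your favor worth keeping: you correctly flag that one must check the composite isomorphism is the \emph{canonical} embedding $E\to E^{**}$, since mere isometry with the bidual does not imply reflexivity (cf.\ James's space); the paper's citation silently subsumes this.
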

\begin{proof}
This is a direct consequence of Phillips' theorem, see \cite{{JDJJ1977},{LJJJ2022}}.
\end{proof}
\begin{lemma}\label{L3.3}
For all $R, T>0$ the metric space $\left(\mathbf{B}_{R, T}, d\right)$, where
$$
\mathbf{B}_{R, T}:=\left\{u \in \mathbf{Y}_T:\|(1+\mathcal{L})^{\frac{1}{2}}u\|_{\mathbf{Y}_T} \leq R\right\}
$$
and
$$
d(u, v):=\|(1+\mathcal{L})^{\frac{1}{2}}(u-v)\|_{\mathbf{Y}_T}
$$
is complete.
\end{lemma}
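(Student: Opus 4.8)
The metric $d$ is precisely the one induced by the norm $\|\cdot\|_\ast:=\|(1+\mathcal{L})^{\frac12}\,\cdot\,\|_{\mathbf{Y}_T}$, so the statement is a soft functional-analytic fact once the operator $J:=(1+\mathcal{L})^{\frac12}$ is understood. The plan is to show that $J$ identifies $(\mathbf{B}_{R,T},d)$ isometrically with a closed ball of the Banach space $\mathbf{Y}_T$, and then to invoke three elementary facts: $\mathbf{Y}_T$ is complete, closed balls in a Banach space are complete, and surjective isometries transport completeness. Thus I would never argue about $\mathbf{B}_{R,T}$ directly, but rather push everything to $\mathbf{Y}_T$ through $J$.

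First I would record that $\mathbf{Y}_T=L_T^{m_1}L_x^{n_1}\cap L_T^{m_2}L_x^{n_2}$ is a Banach space. Each factor $L_T^{m_i}L_x^{n_i}$ is a Bochner--Lebesgue space, hence complete (this is the argument of Lemma \ref{L3.2} via Phillips' theorem, with $W^{1,n}$ replaced by $L^n$), and a finite intersection of Banach spaces that are continuously embedded in the common Hausdorff space $\mathcal{D}'((0,T)\times\mathbb{R}^d)$, equipped with the sum of the two norms, is again Banach: a Cauchy sequence is Cauchy in each factor, its two limits coincide as distributions, and therefore lie in the intersection. Next I would apply Proposition \ref{P3.2} with $s=1$, giving $\|(1+\mathcal{L})^{\frac12}f\|_{L^{n_i}}\sim\|f\|_{W^{1,n_i}}$ for $i=1,2$. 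Consequently $J$ is a linear homeomorphism of $\mathbf{X}_T$ onto $\mathbf{Y}_T$ with bounded inverse $(1+\mathcal{L})^{-\frac12}$ and $\|Ju\|_{\mathbf{Y}_T}\sim\|u\|_{\mathbf{X}_T}$. In particular, for any $w\in\mathbf{Y}_T$ the element $u:=(1+\mathcal{L})^{-\frac12}w$ satisfies $u\in\mathbf{X}_T\hookrightarrow\mathbf{Y}_T$ and $Ju=w$, so $J$ maps $\mathbf{B}_{R,T}$ bijectively onto the closed ball $\overline{B}_R:=\{w\in\mathbf{Y}_T:\|w\|_{\mathbf{Y}_T}\le R\}$; since $d(u,v)=\|J(u-v)\|_{\mathbf{Y}_T}=\|Ju-Jv\|_{\mathbf{Y}_T}$, the map $J:(\mathbf{B}_{R,T},d)\to(\overline{B}_R,\|\cdot\|_{\mathbf{Y}_T})$ is a surjective isometry.

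It then remains only to note that $\overline{B}_R$ is a closed, hence complete, subset of the Banach space $\mathbf{Y}_T$ (the norm $w\mapsto\|w\|_{\mathbf{Y}_T}$ being continuous), and that completeness passes along a surjective isometry. Concretely, in the final write-up I would take a $d$-Cauchy sequence $(u_k)\subset\mathbf{B}_{R,T}$, transport it by $J$ to a Cauchy sequence $w_k:=(1+\mathcal{L})^{\frac12}u_k$ in $\mathbf{Y}_T$, pass to its limit $w$ (with $\|w\|_{\mathbf{Y}_T}\le R$ by continuity of the norm), and set $u:=(1+\mathcal{L})^{-\frac12}w$. Then $u\in\mathbf{Y}_T$, $(1+\mathcal{L})^{\frac12}u=w\in\mathbf{Y}_T$ with $\|(1+\mathcal{L})^{\frac12}u\|_{\mathbf{Y}_T}\le R$, so $u\in\mathbf{B}_{R,T}$, and $d(u_k,u)=\|w_k-w\|_{\mathbf{Y}_T}\to0$, proving completeness.

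The only genuine point requiring care --- and the single place where the analytic structure of the problem enters --- is the two-sided bound $\|(1+\mathcal{L})^{\frac12}f\|_{L^{n_i}}\sim\|f\|_{W^{1,n_i}}$ and the resulting invertibility of $J$ between $\mathbf{X}_T$ and $\mathbf{Y}_T$, which is exactly where Proposition \ref{P3.2} (and hence the Kato-potential hypotheses \eqref{eq3.2}--\eqref{eq3.3}) is indispensable. In carrying this out one should verify that the exponents $n_1$ and $n_2$ lie in the admissible range of that proposition for $s=1$. Everything else is routine Banach-space bookkeeping: completeness of Bochner spaces, completeness of a finite intersection, and stability of completeness under isometry.
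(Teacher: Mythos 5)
Your proof is correct, but it takes a genuinely different route from the paper's. The paper argues directly on a $d$-Cauchy sequence $(u_n)$: it first extracts a limit $u$ in $\mathbf{Y}_T$ (using that $\mathbf{Y}_T$ is Banach), and then shows that $u$ respects the ball constraint by invoking Lemma \ref{L3.2} (reflexivity of $L_T^{m}W_x^{1,n}$, via Phillips' theorem) to get a weakly convergent subsequence in each $L_T^{m_i}W_x^{1,n_i}$, identifying the weak limit with $u$ in the sense of distributions, and concluding by weak lower semicontinuity of the norm that $\|u\|_{\mathbf{X}_T}\le \lim\|u_n\|_{\mathbf{X}_T}\le R$. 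You instead observe that, since the ball and the metric here are built from the \emph{same} norm $\|(1+\mathcal{L})^{1/2}\cdot\|_{\mathbf{Y}_T}$, the operator $J=(1+\mathcal{L})^{1/2}$ identifies $(\mathbf{B}_{R,T},d)$ isometrically with a closed ball of $\mathbf{Y}_T$, so no weak-compactness machinery is needed at all. Your route buys two things: it dispenses with reflexivity entirely, and it yields convergence in the metric $d$ explicitly, whereas the paper's proof, as written, only establishes $\|u_n-u\|_{\mathbf{Y}_T}\to0$ and $u\in\mathbf{B}_{R,T}$, leaving the upgrade to $d(u_n,u)\to0$ implicit. The price is that you use surjectivity of $J:\mathbf{X}_T\to\mathbf{Y}_T$ (equivalently, boundedness of $(1+\mathcal{L})^{-1/2}:L^{n_i}\to W^{1,n_i}$), which is a slightly stronger reading of Proposition \ref{P3.2} than the literal two-sided norm bound; this is the standard content of such Sobolev-equivalence results, and in any case your argument survives without it, since the lower bound $\|Ju\|_{\mathbf{Y}_T}\gtrsim\|u\|_{\mathbf{X}_T}$ together with completeness of $\mathbf{X}_T$ shows that $J$ has closed range, so $J(\mathbf{B}_{R,T})$ is the intersection of that closed range with the closed ball, still a complete subset of $\mathbf{Y}_T$. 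Finally, your caveat about the exponent range in Proposition \ref{P3.2} is well taken and is not specific to your argument: for $s=1$ one needs $n_i<d$, which holds for $d\ge4$ but fails for $d=3$ when $q\le2$ (then $n_1=\frac{6q}{3q-2}\ge3$); the paper's own proof uses the same equivalence with $r=n_i$ and is affected in exactly the same way.
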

\begin{proof}
Let $\left(u_n\right)$ be a Cauchy sequence. Since $\mathbf{Y}_T$ is a Banach space, there exists $u \in \mathbf{Y}_T$ such that
$$
\lim\limits_{n \rightarrow \infty}\left\|u_n-u\right\|_{\mathbf{Y}_T}=0.
$$
It remains to show that $u \in \mathbf{B}_{R, T}$. By taking a subsequence, we can assume that $l_1:=\lim\limits_{n \rightarrow \infty}\left\|u_n\right\|_{L_T^{m_1}W_x^{1,n_1}}$ and $l_2:=\lim\limits_{n \rightarrow \infty}\left\|u_n\right\|_{L_T^{m_2}W_x^{1,n_2}}$ exist. By Lemma \ref{L3.2}, there exists a subsequence of $\left(u_n\right)$ which converges weakly in $X_{p_1, r_1, T}$. In particular, this sequence converges in the sense of distributions and hence the limit equals $u$. Hence, $\|u\|_{L_T^{m_1}W_x^{1,n_1}} \leq l_1$. Similarly, we also have $\|u\|_{L_T^{m_2}W_x^{1,n_2}} \leq l_2$. Taking the sum, we get $\|u\|_{\mathbf{X}_T} \leq l_1+l_2 \leq R$.
\end{proof}
\begin{lemma}\label{L3.4}
There exists $\rho_0>0$ such that if $\varphi \in H^1(\mathbb{R}^d)$ and $T \in(0,1]$ satisfy
$$
\left\|e^{i t \mathcal{L}} \varphi\right\|_{\mathbf{X}_T} \leq \rho_0,
$$
then there exists a unique integral solution $u(t, x)$ to \eqref{eq3.1} on the time interval $[0, T]$. Moreover $u(t, x) \in L_T^{m}W_x^{1,n}$ for every admissible couple $(m, n)$ and satisfies the following conservation laws
$$
I(u(t))=I(\varphi), \|u(t)\|_2=\|\varphi\|_2  \text { for all } t \in[0, T] .
$$
\end{lemma}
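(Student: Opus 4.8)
The plan is to realize the integral solution of Definition \ref{D3.2} as the fixed point of the Duhamel map
\[
\Phi(u)(t) := e^{it\mathcal{L}}\varphi + i\int_0^t e^{i(t-s)\mathcal{L}}\Big((I_{\alpha}\ast|u|^{q})|u|^{q-2}u+(I_{\alpha}\ast|u|^{2_{\alpha}^*})|u|^{2_{\alpha}^*-2}u\Big)\,ds
\]
on the complete metric space $(\mathbf{B}_{R,T}, d)$ furnished by Lemma \ref{L3.3}, for a radius $R$ to be tuned against $\rho_0$. First I would note that, by Proposition \ref{P3.2}, the norm $\|(1+\mathcal{L})^{\frac{1}{2}}\,\cdot\,\|_{\mathbf{Y}_T}$ defining $\mathbf{B}_{R,T}$ and $d$ is equivalent to $\|\cdot\|_{\mathbf{X}_T}$, so every estimate may be carried out in $\mathbf{X}_T$. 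Since $(1+\mathcal{L})^{\frac{1}{2}}$ commutes with $e^{it\mathcal{L}}$, the linear part of $\Phi(u)$ contributes exactly the quantity controlled by the hypothesis $\|e^{it\mathcal{L}}\varphi\|_{\mathbf{X}_T}\le\rho_0$ together with Proposition \ref{P3.1}(iii), while the Duhamel part is estimated by Lemma \ref{L3.1} applied with $\beta=q$ (paired with $(m_1,n_1)$) and with $\beta=2_\alpha^*$ (paired with $(m_2,n_2)$), choosing the free admissible pair there to be $(\tilde m,\tilde n)=(m_i,n_i)$.

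This yields a self-mapping bound of the schematic form
\[
\|\Phi(u)\|_{\mathbf{X}_T} \le C\rho_0 + C\big(\|u\|_{\mathbf{X}_T}^{2q-1}+\|u\|_{\mathbf{X}_T}^{2\cdot2_\alpha^*-1}\big)
\]
and, after cancellation of the linear parts in $\Phi(u)-\Phi(v)$ and using \eqref{eq3.8}, a contraction bound
\[
d(\Phi(u),\Phi(v)) \le C\big(\|u\|_{\mathbf{X}_T}^{2(q-1)}+\|v\|_{\mathbf{X}_T}^{2(q-1)}+\|u\|_{\mathbf{X}_T}^{2(2_\alpha^*-1)}+\|v\|_{\mathbf{X}_T}^{2(2_\alpha^*-1)}\big)\,d(u,v).
\]
Because $q>1$ and $2_\alpha^*>1$, every exponent $2q-1,\ 2\cdot2_\alpha^*-1$ exceeds $1$ and every exponent $2(q-1),\ 2(2_\alpha^*-1)$ is positive; hence I would set $R\sim\rho_0$ and then shrink $\rho_0$ so that $C(R^{2q-2}+R^{2\cdot2_\alpha^*-2})\le\tfrac{1}{2}$. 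With these choices $\Phi:\mathbf{B}_{R,T}\to\mathbf{B}_{R,T}$ is a $\tfrac{1}{2}$-contraction, and Banach's fixed point theorem on the complete space of Lemma \ref{L3.3} produces a unique $u\in\mathbf{B}_{R,T}$ with $\Phi(u)=u$, which is the desired integral solution; uniqueness in the full class $\mathcal{C}([0,T],H^1)\cap\mathbf{X}_T$ then follows by the usual argument of comparing two solutions on a short subinterval and continuing.

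It remains to record the extra conclusions. For an arbitrary admissible pair $(m,n)$ I would substitute the fixed point back into the Duhamel identity and apply Proposition \ref{P3.1}(iii) to the linear part and Lemma \ref{L3.1} with $(\tilde m,\tilde n)=(m,n)$ to the nonlinear part, obtaining $u\in L_T^{m}W_x^{1,n}$; the continuity statements in Proposition \ref{P3.1} upgrade this to $u\in\mathcal{C}([0,T],H^1)\cap\mathbf{X}_T$. The conservation laws $\|u(t)\|_2=\|\varphi\|_2$ and $I(u(t))=I(\varphi)$ are then obtained by the standard route of proving them first for smooth or regularized data (pairing the equation with $\bar u$ for the mass and with $\partial_t\bar u$ for the energy, the Choquard terms being exactly the derivatives of the potential terms in $I$) and passing to the limit using the Strichartz regularity just established. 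The main obstacle is that the closing of the fixed point cannot rely on smallness of $T$: at the upper critical exponent $2_\alpha^*$ the pair $(m_2,n_2)$ is scaling-admissible, so Lemma \ref{L3.1} produces no positive power of $T$, and the required smallness must be extracted entirely from $\|e^{it\mathcal{L}}\varphi\|_{\mathbf{X}_T}\le\rho_0$. Verifying that the Hardy--Littlewood--Sobolev and Hölder bookkeeping of Proposition \ref{P3.3}, together with the embedding $W_x^{1,n}\hookrightarrow L_x^{\frac{4d\beta(\beta-1)}{(3d-2\alpha)\beta+2}}$, remains valid simultaneously for $\beta=q$ and $\beta=2_\alpha^*$ is the delicate point on which the whole contraction rests.
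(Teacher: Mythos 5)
Your proposal is correct and follows essentially the same route as the paper: the Duhamel map $\Phi$ is shown to be a contraction on the complete metric space $(\mathbf{B}_{R,T},d)$ of Lemma \ref{L3.3} with $R\sim\rho_0$, using the estimates \eqref{eq3.7}--\eqref{eq3.8} of Lemma \ref{L3.1} with $\beta=q$ and $\beta=2_\alpha^*$, with all smallness drawn from $\rho_0$ rather than from $T$, and with the conservation laws handled by the classical regularization argument (the paper simply cites Ozawa for this step). Your closing remark about criticality forcing the smallness to come from $\left\|e^{it\mathcal{L}}\varphi\right\|_{\mathbf{X}_T}$ is exactly the point of the paper's setup.
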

\begin{proof}
\textbf{Step 1.} For any $u \in \mathbf{X}_T$ and $t \in[0, T]$, we define
\begin{equation}\label{eq3.9}
  \Phi(u)(t):=e^{i t  \mathcal{L}} \varphi+i \int_0^t e^{i(t-s)  \mathcal{L}} \left((I_{\alpha}\ast|u|^{q})|u|^{q-2}u+(I_{\alpha}\ast|u|^{2_{\alpha}^*})|u|^{2_{\alpha}^*-2}u\right) d s .
\end{equation}
Next, we will prove that, if $\rho_0>0$ is small enough, then $\Phi$ defines a contraction on the metric space $\left(\mathbf{B}_{2 \rho_0, T}, d\right)$ (see Lemma \ref{L3.3}). In fact, let $u \in \mathbf{B}_{2 \rho_0, T}$ and consider any admissible pair $(\tilde{m}, \tilde{n})$. Let $T \in(0,1]$, it follows from Lemma \ref{L3.2}, \eqref{eq3.7} and \eqref{eq3.9} that for all $u \in \mathbf{B}_{2 \rho_0, T}$,
\begin{equation*}
  \|(1+\mathcal{L})^{\frac{1}{2}}\Phi(u)(t)\|_{L_T^{\tilde{m}}L_x^{\tilde{n}}}\leq C   \left(\|u\|_{L_{T}^{m_1}W_x^{1,n_1}}^{2q-1}+C   \|u\|_{L_{T}^{m_2}W_x^{1,n_2}}^{2\cdot2_\alpha^*-1}\right).
\end{equation*}
In particular if we choose $(\tilde{m}, \tilde{n})=\left(m_1, n_1\right)$ and $(\tilde{m}, \tilde{n})=\left(m_2, n_2\right)$ then
$$
\|\Phi(u)\|_{\mathbf{X}_T} \leq \rho_0+C 2^q \rho_0^{q-1}
$$
and hence if $\gamma_0>0$ is small enough such that $C 2^{q+2} \rho_0^{q-1} \leq \rho_0$, then $B_{2 \rho_0, T}$ is an invariant set of $\Phi$.

Now, let $u, v \in \mathbf{B}_{2 \rho_0, T}$. It follows from \eqref{eq3.8} that, for every admissible pair $(\tilde{m}, \tilde{n})$,
\begin{eqnarray*}
\|(1+\mathcal{L})^{\frac{1}{2}}[\Phi(u)-\Phi(v)]\|_{L_T^{\tilde{m}}L_x^{\tilde{n}}} & \leq& C \left(\|u\|_{L_{T}^{m_1}W_x^{1,n_1}}^{2(q-1)}+\|v\|_{L_{T}^{m_1}W_x^{1,n_1}}^{2(q-1)}\right)\cdot \|u-v\|_{L_{T}^{m_1}W_x^{1,n_1}} \\
&& +C \left(\|u\|_{L_{T}^{m_2}W_x^{1,n_2}}^{2(2_\alpha^*-1)}+\|v\|_{L_{T}^{m_2}W_x^{1,n_2}}^{2(2_\alpha^*-1)}\right)\cdot \|u-v\|_{L_{T}^{m_2}W_x^{1,n_2}} \\
& \leq& C 2^q \rho_0^{q-2}\left(\|u-v\|_{L_{T}^{m_1}W_x^{1,n_1}}+\|u-v\|_{L_{T}^{m_2}W_x^{1,n_2}}\right) .
\end{eqnarray*}
In particular if we choose $(\tilde{m}, \tilde{n})=\left(m_1, n_1\right)$ and $(\tilde{m}, \tilde{n})=\left(m_2, n_2\right)$ then
$$
\|(1+\mathcal{L})^{\frac{1}{2}}[\Phi(u)-\Phi(v)]\|_{\mathbf{Y}_T} \leq C 2^{q+1} \rho_0^{q-2}\|(1+\mathcal{L})^{\frac{1}{2}}(u-v)\|_{\mathbf{Y}_T}
$$
and if we choose $\gamma_0>0$ small enough such that $C 2^{q+1} \rho_0^{q-2}<\frac{1}{2}$ then $\Phi$ is a contraction on $\left(\mathbf{B}_{2 \gamma_0, T}, d\right)$. In particular $\Phi$ has one unique fixed point in this space.

\textbf{Step 2.} The proof of conservation laws is classical, so it is omitted here. Readers can refer to the Proposition 1 and Proposition 2 in \cite{TO2006}.
\end{proof}
Next, we prove that $T_{\varphi}^{\text {max }}=\infty$.
\begin{lemma}\label{L3.5}
Let $\mathcal{K} \subset H^1(\mathbb{R}^d) \backslash\{0\}$ be compact up to translation and assume that $(m, n)$ is an admissible pair with $m \neq \infty$. Then, for every $\rho>0$ there exists $\varepsilon=\varepsilon(\rho)>0$ and $T=T(\rho)>0$ such that
$$
\sup _{\{\varphi \in H^1(\mathbb{R}^d) \mid \mathop{\operatorname{dist}}\limits_{H^1(\mathbb{R}^d)}(\varphi, \mathcal{K})<\varepsilon\}}\left\|e^{i t  \mathcal{L}} \varphi\right\|_{L_T^mW_x^{1,n}}<\rho .
$$
\end{lemma}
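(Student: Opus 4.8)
The plan is to argue by contradiction, using the compactness of $\mathcal{K}$ up to translation to collapse the supremum onto a single limiting profile, and exploiting that the estimates of Proposition~\ref{P3.1} hold with a constant that is \emph{uniform over all spatial translates of} $V$. The point is that the hypotheses \eqref{eq3.2}--\eqref{eq3.3} are translation invariant, since both the Kato norm and the $L^{\frac{d}{2}}$ norm are unchanged under $V\mapsto V(\cdot+y)$. Hence, setting $\mathcal{L}_y:=\Delta-V(\cdot+y)$, Propositions~\ref{P3.1} and \ref{P3.2} hold for every $\mathcal{L}_y$ with one and the same constant. Writing $\tau_y f:=f(\cdot-y)$, conjugation of the multiplication operator by a translation gives $e^{it\mathcal{L}}\tau_y=\tau_y e^{it\mathcal{L}_y}$, so that, since $\|\cdot\|_{W^{1,n}}$ is translation invariant, one has the key identity $\|e^{it\mathcal{L}}\tau_y g\|_{W_x^{1,n}}=\|e^{it\mathcal{L}_y}g\|_{W_x^{1,n}}$ for all $g\in H^1(\mathbb{R}^d)$.

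Suppose the conclusion fails for some $\rho>0$. Taking $\varepsilon=T=1/k$ yields $\varphi_k$ with $\mathrm{dist}_{H^1}(\varphi_k,\mathcal{K})\to0$ and $\|e^{it\mathcal{L}}\varphi_k\|_{L_{1/k}^mW_x^{1,n}}\ge\rho$. Choosing $\psi_k\in\mathcal{K}$ with $\|\varphi_k-\psi_k\|_{H^1}\to0$ and applying Proposition~\ref{P3.1}(iii) (with $T\le1$) to the difference, the contribution of $\varphi_k-\psi_k$ is $o_k(1)$, whence $\|e^{it\mathcal{L}}\psi_k\|_{L_{1/k}^mW_x^{1,n}}\ge\rho/2$ for large $k$. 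By compactness up to translation pick $y_k$ with $g_k:=\psi_k(\cdot+y_k)\to\psi_0$ in $H^1(\mathbb{R}^d)$; since $\psi_k=\tau_{y_k}g_k$, the identity above together with the \emph{uniform} Proposition~\ref{P3.1}(iii) (used once more to replace $g_k$ by $\psi_0$) reduces the whole problem to the single profile $\psi_0$, namely $\|e^{it\mathcal{L}_{y_k}}\psi_0\|_{L_{1/k}^mW_x^{1,n}}\ge\rho/4$ for all large $k$.

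It remains to contradict this lower bound. Along a subsequence either $y_k\to y_*$ or $|y_k|\to\infty$. In the first case $V(\cdot+y_k)\to V(\cdot+y_*)$ in $L^{\frac{d}{2}}$ by continuity of translation, and a continuous-dependence argument (the Duhamel identity combined with the $\mathcal{L}_y$-Strichartz estimates and the equivalence $\|(1+\mathcal{L}_y)^{\frac12}f\|_{L^n}\sim\|f\|_{W^{1,n}}$ of Proposition~\ref{P3.2}, so that derivatives are always carried by the \emph{commuting} operator $(1+\mathcal{L}_y)^{\frac12}$ and $V$ is never differentiated) gives $e^{it\mathcal{L}_{y_k}}\psi_0\to e^{it\mathcal{L}_{y_*}}\psi_0$ in $L_1^mW_x^{1,n}$; since the limit lies in $L_1^mW_x^{1,n}$, absolute continuity of the integral forces the norm over $[0,1/k]$ to $0$, a contradiction. \textbf{The main obstacle is the escaping case} $|y_k|\to\infty$, where the potentials $V(\cdot+y_k)$ do \emph{not} converge in $L^{\frac{d}{2}}$ (their norm is constant) but only recede to spatial infinity. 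Here I would approximate $\psi_0$ in $H^1$ by a compactly supported $\phi$ and use that $\int_B|V(x+y_k)|^{\frac{d}{2}}\,dx\to0$ as $|y_k|\to\infty$ on any fixed ball $B$: over the short interval $[0,1/k]$ the solution cannot spread far, so the interaction of $e^{it\mathcal{L}_{y_k}}\phi$ with the receding potential is negligible and the evolution is asymptotically free, i.e.\ comparable to the fixed function $e^{it\Delta}\phi$ in $L_1^mW_x^{1,n}$; absolute continuity then again yields smallness on $[0,1/k]$. Quantifying this negligibility uniformly along the sequence, while measuring all derivatives through $(1+\mathcal{L}_{y_k})^{\frac12}$ and relying throughout on the translation-uniform bounds of Propositions~\ref{P3.1}--\ref{P3.2}, is the delicate part of the argument.
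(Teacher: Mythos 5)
Your reductions are sound, and you have correctly put your finger on a real weak point of this problem: the conjugation identity $e^{it\mathcal{L}}\tau_y=\tau_y e^{it\mathcal{L}_y}$ is the right replacement for translation invariance, and the uniformity of the Strichartz constants over the translated operators $\mathcal{L}_y$ follows for free from that same identity (since $\|e^{it\mathcal{L}_y}g\|_{L_T^m W_x^{1,n}}=\|e^{it\mathcal{L}}\tau_y g\|_{L_T^m W_x^{1,n}}\le C\|\tau_y g\|_{H^1}=C\|g\|_{H^1}$), so the problem genuinely reduces to bounding $\|e^{it\mathcal{L}_{y_k}}\psi_0\|_{L_{1/k}^m W_x^{1,n}}$. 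By contrast, the paper's own proof simply invokes ``the translation invariance of Strichartz estimates'' to assert $\|e^{it\mathcal{L}}\widetilde{\varphi}_\tau\|_{L_{T_\tau}^m W_x^{1,n}}=\|e^{it\mathcal{L}}\varphi_\tau\|_{L_{T_\tau}^m W_x^{1,n}}$, which is false for non-constant $V$: the propagator $e^{it\mathcal{L}}$ does not commute with translations, and what is true is exactly your identity, which changes the operator. (The paper's argument is transplanted verbatim from the $V\equiv0$ setting of \cite{LJJJ2022}, where it is correct.) Note also that in your convergent case $y_k\to y_*$ none of the continuous-dependence machinery is needed: there $\psi_k=\tau_{y_k}g_k\to\tau_{y_*}\psi_0$ in $H^1(\mathbb{R}^d)$ by continuity of translation, so the sequence collapses onto a single profile and the one-profile argument (Proposition \ref{P3.1}(iii) plus dominated convergence, for the original $\mathcal{L}$) finishes. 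This simplification also sidesteps a flaw in the continuity argument as you describe it: in the Duhamel formula for $e^{it\mathcal{L}_{y_k}}\psi_0-e^{it\mathcal{L}_{y_*}}\psi_0$ the derivative operator $(1+\mathcal{L}_{y_*})^{1/2}$ necessarily falls on the source term $\bigl(V(\cdot+y_k)-V(\cdot+y_*)\bigr)e^{is\mathcal{L}_{y_k}}\psi_0$, so $V$ does get differentiated, contrary to your parenthetical claim.

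The genuine gap is the escaping case $|y_k|\to\infty$, which you yourself flag as the delicate part and do not close; the sketch given does not amount to a proof. ``Over the short interval $[0,1/k]$ the solution cannot spread far'' is not an estimate: the Schr\"odinger flow has infinite speed of propagation, and making ``asymptotically free'' quantitative means controlling the Duhamel term $\int_0^t e^{i(t-s)\Delta}\bigl[V(\cdot+y_k)\,e^{is\mathcal{L}_{y_k}}\phi\bigr]\,ds$ in $L_T^m W_x^{1,n}$. There are two concrete obstructions. First, the dual Strichartz norm of the source at regularity one requires a derivative of $V(\cdot+y_k)\,u$, and a Kato-class/$L^{\frac d2}$ potential cannot be differentiated; routing the derivative through $(1+\mathcal{L}_{y_k})^{1/2}$ does not help, for the same reason as above. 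Second, even at the level of $L^n$ norms, Strichartz plus H\"older only yields a bound of the form $C\|V\|_{\frac d2}\|e^{is\mathcal{L}_{y_k}}\phi\|_{L_T^m L_x^n}$, with no smallness; the splitting $V(\cdot+y_k)=V(\cdot+y_k)\chi_{B_R}+V(\cdot+y_k)\chi_{B_R^c}$ makes only the first piece small, and for the second piece you need a non-leakage bound for $e^{is\mathcal{L}_{y_k}}\phi$ outside $B_R$, uniform in $k$ and in Strichartz norms, which you have not provided and which is precisely the missing content. So the proposal proves strictly less than the lemma. It is worth saying that the paper's own proof is invalid at exactly this point as well--your instinct about where the difficulty sits is better than the paper's--but your argument does not yet repair it.
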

\begin{proof}
Firstly, we prove  the existence of a $T>0$ such that
 \begin{equation}\label{eq3.10}
   \sup\limits_{\varphi \in \mathcal{K}}\left\|e^{i t \mathcal{L}} \varphi\right\|_{L_T^mW_x^{1,n}}<\frac{\rho}{2} .
 \end{equation}
for every $\rho>0$. By contradiction, there exists sequences $\left(\varphi_\tau\right) \subset \mathcal{K}$ and $\left(T_\tau\right) \subset \mathbb{R}^{+}$such that $T_\tau \rightarrow 0$ and
\begin{equation}\label{eq3.11}
  \left\|e^{i t \mathcal{L}} \varphi_\tau\right\|_{L_{T_\tau}^mW_x^{1,n}} \geq \bar{\rho}
\end{equation}
for a suitable $\bar{\rho}>0$. Since $\mathcal{K}$ is compact up to translation, passing to a subsequence, there exists a sequence $\left(x_\tau\right) \subset \mathbb{R}^d$ such that
$$
\widetilde{\varphi_\tau}(\cdot):=\varphi_n\left(\cdot-x_\tau\right) \xrightarrow{H^1(\mathbb{R}^d)} \varphi(\cdot)
$$
for a $\varphi \in H^1(\mathbb{R}^d)$. By continuity(induced by Strichartz estimates) we have, for every $ \overline{T}>0$,
$$
\left\|e^{i t \mathcal{L}} \widetilde{\varphi}_\tau\right\|_{L_{\overline{T}}^mW_x^{1,n}} \rightarrow\left\|e^{i t  \mathcal{L}} \varphi\right\|_{L_{\overline{T}}^mW_x^{1,n}} .
$$
Also, recording the translation invariance of Strichartz estimates we get from \eqref{eq3.11} that
$$
\left\|e^{i t \mathcal{L}} \widetilde{\varphi}_\tau\right\|_{L_{T_\tau}^mW_x^{1,n}}=\left\|e^{i t  \mathcal{L}} \varphi_\tau\right\|_{L_{T_\tau}^mW_x^{1,n}} \geq \bar{\rho}.
$$
Now, by Proposition \ref{P3.1} (iii), we have $e^{i t  \mathcal{L}} \varphi \in L_{1}^mW_x^{1,n}$, namely the function $t \rightarrow g(t):=\left\|e^{i t  \mathcal{L}} \varphi\right\|_{W^{1, n}(\mathbb{R}^d)}^m$ belongs to $L^1([0,1])$. Then by the Dominated Convergence Theorem we get $\left\|\chi_{[0, \tilde{T}]}(t) g(t)\right\|_{L^1([0,1])} \rightarrow 0$ as $\widetilde{T} \rightarrow 0$, namely $\left\|e^{i t  \mathcal{L}} \varphi\right\|_{L_{\widetilde{T} }^mW_x^{1,n}}^p \rightarrow 0$ as $\widetilde{T} \rightarrow 0$. Hence, we can choose $ \overline{T}>0$ such that
$$
\left\|e^{i t  \mathcal{L}} \varphi\right\|_{L_{\overline{T }}^mW_x^{1,n}}< \bar{\rho} .
$$
So we get a contradiction and the claim holds. Now, fix a $T>0$ such that \eqref{eq3.10} holds. By Proposition \ref{P3.1} (iii), it holds
$$
\left\|e^{i t  \mathcal{L}} \eta\right\|_{L_T^mW_x^{1,n}} \leq C\|\eta\|_{H^1(\mathbb{R}^d)}, \quad \forall \eta \in H^1(\mathbb{R}^d) .
$$
Thus, assuming that $\|\eta\|_{H^1(\mathbb{R}^d)}<\frac{\rho}{2 C}:=\varepsilon$, we obtain that
$$
\left\|e^{i t  \mathcal{L}} \eta\right\|_{L_T^mW_x^{1,n}}<\frac{\rho}{2}.
$$
Summarizing, we get that, for all $\varphi \in \mathcal{K}$ and all $\eta \in L_T^mW_x^{1,n}$ such that $\|\eta\|_{L_T^mW_x^{1,n}}<\varepsilon$,
$$
\left\|e^{i t  \mathcal{L}}(\varphi+\eta)\right\|_{L_T^mW_x^{1,n}} \leq\left\|e^{i t \mathcal{L}} \varphi\right\|_{L_T^mW_x^{1,n}}+\left\|e^{i t  \mathcal{L}} \eta\right\|_{L_T^mW_x^{1,n}}<\rho .
$$
This implies the Lemma.
\end{proof}
\begin{lemma}\label{L3.6}
Let $\mathcal{K} \subset H^1(\mathbb{R}^d) \backslash\{0\}$ be compact up to translation. Then, for every $\rho>0$ there exists $\varepsilon=\varepsilon(\rho)>0$ and $T=T(\rho)>0$ such that
$$
\sup\limits_{\varphi \in \mathcal{K}}\left\|e^{i t \mathcal{L}} \varphi\right\|_{\mathbf{X}_T}<\rho .
$$
\end{lemma}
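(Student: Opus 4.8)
The plan is to deduce Lemma \ref{L3.6} directly from Lemma \ref{L3.5}, applied separately to each of the two admissible pairs that define $\mathbf{X}_T$. Recall that $\mathbf{X}_T=L_T^{m_1}W_x^{1,n_1}\cap L_T^{m_2}W_x^{1,n_2}$ with $\|w\|_{\mathbf{X}_T}=\|w\|_{L_T^{m_1}W_x^{1,n_1}}+\|w\|_{L_T^{m_2}W_x^{1,n_2}}$, where $(m_1,n_1)$ is the couple of Lemma \ref{L3.1} with $\beta=q$ and $(m_2,n_2)$ the one with $\beta=2_\alpha^*$. In particular both are admissible, and their first indices $m_1=2q$ and $m_2=2\cdot 2_\alpha^*$ are finite, so the hypothesis $m\neq\infty$ of Lemma \ref{L3.5} is met for each of them.

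First I would fix $\rho>0$ and invoke Lemma \ref{L3.5} with tolerance $\rho/2$ for the pair $(m_1,n_1)$, obtaining $\varepsilon_1=\varepsilon_1(\rho)>0$ and $T_1=T_1(\rho)>0$ so that $\|e^{it\mathcal{L}}\varphi\|_{L_{T_1}^{m_1}W_x^{1,n_1}}<\rho/2$ whenever $\operatorname{dist}_{H^1}(\varphi,\mathcal{K})<\varepsilon_1$; repeating for $(m_2,n_2)$ produces $\varepsilon_2,T_2$ with the analogous bound $\|e^{it\mathcal{L}}\varphi\|_{L_{T_2}^{m_2}W_x^{1,n_2}}<\rho/2$. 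I would then set $\varepsilon:=\min\{\varepsilon_1,\varepsilon_2\}$ and $T:=\min\{T_1,T_2\}$. The elementary point that makes the combination work is that each map $T\mapsto\|w\|_{L_T^{m}W_x^{1,n}}=\big(\int_0^T\|w(t)\|_{W^{1,n}}^{m}\,dt\big)^{1/m}$ is non-decreasing in $T$, so shrinking the time interval can only decrease these norms. Hence for every $\varphi$ with $\operatorname{dist}_{H^1}(\varphi,\mathcal{K})<\varepsilon$ (in particular for every $\varphi\in\mathcal{K}$), since $T\le T_1$ and $T\le T_2$ one has $\|e^{it\mathcal{L}}\varphi\|_{L_T^{m_i}W_x^{1,n_i}}\le\|e^{it\mathcal{L}}\varphi\|_{L_{T_i}^{m_i}W_x^{1,n_i}}<\rho/2$ for $i=1,2$. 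Adding the two contributions and using the definition of the $\mathbf{X}_T$-norm yields $\|e^{it\mathcal{L}}\varphi\|_{\mathbf{X}_T}<\rho/2+\rho/2=\rho$, which is the claim.

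There is no substantial obstacle here: Lemma \ref{L3.6} is merely the conjunction of the single-pair estimate of Lemma \ref{L3.5} over the two finitely many admissible pairs that build $\mathbf{X}_T$. The only points requiring (routine) care are that the tolerances $\varepsilon_i$ and times $T_i$ must be combined by taking minima, and that this is legitimate precisely because the Strichartz-type norms $L_T^{m}W_x^{1,n}$ are monotone in $T$; the real analytic content was already expended in establishing Lemma \ref{L3.5} (and behind it Proposition \ref{P3.1}).
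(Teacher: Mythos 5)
Your proof is correct and follows essentially the same route as the paper, which likewise applies Lemma \ref{L3.5} once for each of the two admissible pairs $(m_1,n_1)$ and $(m_2,n_2)$ and then concludes from the definition of the $\mathbf{X}_T$-norm. The only difference is that you spell out the routine details (taking minima of the $\varepsilon_i$ and $T_i$, monotonicity of the truncated norms in $T$, and the $\rho/2+\rho/2$ split) that the paper's one-line proof leaves implicit.
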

\begin{proof}
We apply Lemma \ref{L3.5} twice with the admissible pairs $\left(m_1, n_1\right)$ and $\left(m_2, n_2\right)$. Then the proposition follows from the definition of the norm $\mathbf{X}_T$.
\end{proof}
\begin{lemma}\label{L3.7}
Let $\mathcal{K} \subset H^1(\mathbb{R}^d) \backslash\{0\}$ be compact up to translation. Then there exist $\varepsilon_0>0$ and $T_0>0$ such that the Cauchy problem \eqref{eq3.1}, where $\varphi$ satisfies $\mathop{\operatorname{dist}}\limits_{H^1(\mathbb{R}^d)}(\varphi, \mathcal{K})<\varepsilon_0$, has a unique solution on the time interval $\left[0, T_0\right]$ in the sense of Definition \ref{D3.2}.
\end{lemma}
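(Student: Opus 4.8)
The plan is to obtain this uniform local existence statement by feeding the uniform smallness of the free evolution furnished by Lemma \ref{L3.6} directly into the contraction-mapping existence result of Lemma \ref{L3.4}. The whole content of the statement lies in \emph{uniformity}: both the radius $\varepsilon_0$ and the existence time $T_0$ must be chosen once and for all, independently of the particular initial datum $\varphi$ lying in an $H^1$-neighborhood of the set $\mathcal{K}$ (which is compact up to translation). This uniform existence time is precisely what the subsequent orbital stability argument will need.

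First I would let $\rho_0>0$ denote the smallness threshold produced inside the proof of Lemma \ref{L3.4}, namely the number for which the map $\Phi$ of \eqref{eq3.9} is a contraction on $(\mathbf{B}_{2\rho_0,T},d)$ whenever $\|e^{it\mathcal{L}}\varphi\|_{\mathbf{X}_T}\le\rho_0$ and $T\in(0,1]$. With this $\rho_0$ fixed, I would invoke Lemma \ref{L3.6} with $\rho=\rho_0$ (whose proof via Lemma \ref{L3.5} in fact controls the free evolution uniformly over the entire $\varepsilon$-neighborhood of $\mathcal{K}$, not merely over $\mathcal{K}$ itself) to produce $\varepsilon_0:=\varepsilon(\rho_0)>0$ and $T_0:=T(\rho_0)>0$, which after shrinking may be taken to satisfy $T_0\le 1$, such that $\|e^{it\mathcal{L}}\varphi\|_{\mathbf{X}_{T_0}}<\rho_0$ for every $\varphi\in H^1(\mathbb{R}^d)$ with $\operatorname{dist}_{H^1(\mathbb{R}^d)}(\varphi,\mathcal{K})<\varepsilon_0$.

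With these choices, any initial datum $\varphi$ satisfying $\operatorname{dist}_{H^1(\mathbb{R}^d)}(\varphi,\mathcal{K})<\varepsilon_0$ meets the hypothesis $\|e^{it\mathcal{L}}\varphi\|_{\mathbf{X}_{T_0}}\le\rho_0$ of Lemma \ref{L3.4}, which then yields a unique integral solution of \eqref{eq3.1} on the interval $[0,T_0]$ in the sense of Definition \ref{D3.2}. Since $\varepsilon_0$ and $T_0$ depend only on $\mathcal{K}$ (through $\rho_0$) and not on the particular $\varphi$, the conclusion holds uniformly over the neighborhood, which is exactly the assertion of the lemma.

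The step I would be most careful about is the one carrying the genuine content, namely uniformity: one must verify that the threshold $\rho_0$ supplied by Lemma \ref{L3.4} does not itself depend on $\varphi$ or on $T$. This is the case, since $\rho_0$ is determined solely by the Strichartz constant $C$ and the exponent $q$ through the conditions $C2^{q+2}\rho_0^{q-1}\le\rho_0$ and $C2^{q+1}\rho_0^{q-2}<\tfrac12$ appearing in the contraction argument; hence the same $\rho_0$, and therefore the same pair $(\varepsilon_0,T_0)$, serves simultaneously for every datum in the neighborhood. Once this observation is in place, the lemma follows by the direct combination of Lemma \ref{L3.6} and Lemma \ref{L3.4} described above.
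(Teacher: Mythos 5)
Your proposal is correct and follows exactly the paper's argument: take the threshold $\rho_0$ from Lemma \ref{L3.4}, apply Lemma \ref{L3.6} with $\rho=\rho_0$ to get $\varepsilon_0=\varepsilon(\rho_0)$ and $T_0=\min\{T(\rho_0),1\}$, and conclude by the contraction result of Lemma \ref{L3.4}. Your explicit remark that the uniform smallness really holds over the whole $\varepsilon_0$-neighborhood (via Lemma \ref{L3.5}, not just over $\mathcal{K}$ as literally stated in Lemma \ref{L3.6}) and that $\rho_0$ is independent of $\varphi$ and $T$ makes the uniformity, which the paper leaves implicit, fully transparent.
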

\begin{proof}
We apply Lemma \ref{L3.6} where $\rho=\rho_0$ is given in Lemma \ref{L3.4}. Then Lemma \ref{L3.4} guarantees that the theorem holds for $\varepsilon_0=\varepsilon\left(\rho_0\right)>0$ and $T_0=\min \left\{T\left(\rho_0\right), 1\right\}>0$.
\end{proof}

\begin{lemma}\label{L3.8}
Let $v \in \mathcal{M}_a$. Then, for every $\varepsilon>0$ there exists $\delta>0$ such that
$$
\forall \varphi \in H^1(\mathbb{R}^d) \text { s.t. }\|\varphi-v\|_{H^1(\mathbb{R}^d)}<\delta \Longrightarrow \sup _{t \in\left[0, T_{\varphi}^{\text {max }}\right)} \mathop{\operatorname{dist}}\limits_{H^1(\mathbb{R}^d)}\left(u_{\varphi}(t), \mathcal{M}_a\right)<\varepsilon,
$$
where $u_{\varphi}(t)$ is the solution of \eqref{eq3.1}. In particular we have
\begin{equation*}
   u_{\varphi}(t)=m_a(t)+r(t),\ \forall t \in\left[0, T_{\varphi}^{\text {max }}\right), \|r(t)\|_{H^1(\mathbb{R}^d)}<\varepsilon,
\end{equation*}
where $m_a(t)\in\mathcal{M}_a$.
\end{lemma}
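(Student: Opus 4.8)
The plan is to argue by contradiction, following the classical Cazenave--Lions scheme: the conservation laws from Lemma \ref{L3.4} will turn the flow into a minimizing sequence for $m(a)$ that stays inside the well $\mathbf{B}_{\rho_0}$, and then the compactness statement of Lemma \ref{L1.8} will force convergence to $\mathcal{M}_a$. Concretely, suppose the conclusion fails for some $v\in\mathcal{M}_a$ and some $\varepsilon_0>0$. Choosing $\delta=\tfrac1n$, one obtains a sequence $(\varphi_n)\subset H^1(\mathbb{R}^d)$ with $\varphi_n\to v$ in $H^1(\mathbb{R}^d)$ and times $t_n\in[0,T_{\varphi_n}^{\max})$ such that, writing $u_n:=u_{\varphi_n}(t_n)$,
\[
\operatorname{dist}_{H^1(\mathbb{R}^d)}(u_n,\mathcal{M}_a)\geq \tfrac{\varepsilon_0}{2}.
\]
The goal is to derive $\operatorname{dist}_{H^1(\mathbb{R}^d)}(u_n,\mathcal{M}_a)\to0$, a contradiction.

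First I would read off the two asymptotic relations from the conservation laws of Lemma \ref{L3.4}. Since $v\in\mathcal{M}_a\subset S(a)$ and $I$ is of class $C^1$ on $H^1(\mathbb{R}^d)$, mass conservation gives $\|u_n\|_2^2=\|\varphi_n\|_2^2\to\|v\|_2^2=a$, while energy conservation gives $I(u_n)=I(\varphi_n)\to I(v)=m(a)$. In particular $I(\varphi_n)<0$ for $n$ large, because $m(a)<0$ by Lemma \ref{L1.4}.

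The main step, and the one I expect to be the crux, is the confinement claim: the flow remains trapped in the well, i.e. $\|\nabla u_{\varphi_n}(t)\|_2^2<\rho_0$ for every $t\in[0,T_{\varphi_n}^{\max})$. Here I would combine three facts: that $v\in\mathbf{V}(a)$ forces $\|\nabla v\|_2^2<\rho_0$, hence $\|\nabla\varphi_n\|_2^2<\rho_0$ for $n$ large; that $t\mapsto u_{\varphi_n}(t)$ is continuous into $H^1(\mathbb{R}^d)$ by the local theory (Lemma \ref{L3.4}); and the energy barrier from Lemma \ref{L1.4}. Indeed, if $\|\nabla u_{\varphi_n}(t)\|_2^2$ first reached $\rho_0$ at some $t^\ast$, then $u_{\varphi_n}(t^\ast)\in S(a_n)$ with $\|\nabla u_{\varphi_n}(t^\ast)\|_2^2=\rho_0$ and $a_n:=\|\varphi_n\|_2^2\to a\in(0,a_0)$, so Lemma \ref{L1.3} would give $I(u_{\varphi_n}(t^\ast))\geq\rho_0\, f(a_n,\rho_0)$. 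Since $f(a,\rho_0)>0$ for $a\in(0,a_0)$ (as noted just before Lemma \ref{L1.4}) and $f$ is continuous in its first argument, $f(a_n,\rho_0)>0$ for $n$ large, whence $I(u_{\varphi_n}(t^\ast))>0$; this contradicts $I(u_{\varphi_n}(t^\ast))=I(\varphi_n)<0$. Therefore the gradient stays strictly below $\rho_0$ and $u_n\in\mathbf{B}_{\rho_0}$.

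Finally I would conclude. Having $(u_n)\subset\mathbf{B}_{\rho_0}$, $\|u_n\|_2^2\to a$ and $I(u_n)\to m(a)$, Lemma \ref{L1.8} yields a sequence $(y_n)\subset\mathbb{R}^d$ and $u\in\mathcal{M}_a$ with $u_n(\cdot-y_n)\to u$ in $H^1(\mathbb{R}^d)$. As $I$ and the constraint $S(a)$ are translation invariant, $\mathcal{M}_a$ is translation invariant, so $u(\cdot+y_n)\in\mathcal{M}_a$ and
\[
\operatorname{dist}_{H^1(\mathbb{R}^d)}(u_n,\mathcal{M}_a)\leq\|u_n-u(\cdot+y_n)\|_{H^1(\mathbb{R}^d)}=\|u_n(\cdot-y_n)-u\|_{H^1(\mathbb{R}^d)}\to0,
\]
which contradicts the lower bound above. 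This establishes $\sup_t\operatorname{dist}_{H^1(\mathbb{R}^d)}(u_\varphi(t),\mathcal{M}_a)<\varepsilon$; the decomposition $u_\varphi(t)=m_a(t)+r(t)$ with $m_a(t)\in\mathcal{M}_a$ and $\|r(t)\|_{H^1(\mathbb{R}^d)}<\varepsilon$ then follows immediately by selecting, for each $t$, an element $m_a(t)\in\mathcal{M}_a$ within $\varepsilon$ of $u_\varphi(t)$ and setting $r(t):=u_\varphi(t)-m_a(t)$. The delicate point throughout is the confinement argument, since it is precisely what prevents the gradient from leaving the well and, at the same time, supplies the uniform $H^1$ bound underlying global existence ($T_\varphi^{\max}=\infty$).
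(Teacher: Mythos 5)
Your proposal is correct and follows essentially the same contradiction scheme as the paper: conservation laws from Lemma \ref{L3.4} turn the flow into a minimizing sequence trapped in $\mathbf{B}_{\rho_0}$ by the energy barrier, and the compactness of Lemma \ref{L1.8} together with translation invariance of $\mathcal{M}_a$ yields the contradiction. If anything, your confinement step is slightly more careful than the paper's, which invokes Lemma \ref{L1.4} directly even though the conserved mass is $a_n=\|\varphi_n\|_2^2\neq a$, whereas you handle this via Lemma \ref{L1.3} and the positivity of $f(a_n,\rho_0)$ for $a_n$ near $a$.
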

\begin{proof}
Suppose the theorem is false, there exists a decreasing sequence $\left(\delta_n\right) \subset \mathbb{R}^{+}$ converging to 0 and $\left(\varphi_n\right) \subset H^1(\mathbb{R}^d)$ satisfying
$$
\left\|\varphi_n-v\right\|_{H^1(\mathbb{R}^d)}<\delta_n,
$$
but
$$
\sup _{t \in\left[0, T_{\varphi}^{\text {max }}\right)} \mathop{\operatorname{dist}}_{H^1(\mathbb{R}^d)}\left(u_{\varphi_n}(t), \mathcal{M}_a\right)>\varepsilon_0
$$
for some $\varepsilon_0>0$. We observe that $\left\|\varphi_n\right\|_2^2 \rightarrow a$ and $I\left(\varphi_n\right) \rightarrow m(a)$ by continuity of $I(u)$. By conservation laws, for $n \in \mathbb{N}$ large enough, $u_{\varphi_n}$ will remains inside of $\mathbf{B}_{\rho_0}$ for all $t \in\left[0, T_{\varphi}^{\text {max }}\right)$. Indeed, if for some time $\bar{t}>0,\ \left\|\nabla u_{\varphi_n}(\bar{t})\right\|_2^2=\rho_0$, then in view of Lemma \ref{L1.4} we have that $I\left(u_{\varphi_n}(\bar{t})\right) \geq 0$ in contradiction with $m(a)<0$. Now let $t_n>0$ be the first time such that $\mathop{\operatorname{dist}}\limits_{H^1(\mathbb{R}^d)}\left(u_{\varphi_n}\left(t_n\right), \mathcal{M}_a\right)=\varepsilon_0$ and set $u_n:=u_{\varphi_n}\left(t_n\right)$. By conservation laws, $\left(u_n\right) \subset \mathbf{B}_{\rho_0}$ satisfies $\left\|u_n\right\|_2^2 \rightarrow a$ and $I\left(u_n\right) \rightarrow m(a)$ and thus, in view of Lemma \ref{L1.8}, it converges, up to translation, to an element of $\mathcal{M}_a$. Since $\mathcal{M}_a$ is invariant under translation this contradicts the equality $\mathop{\operatorname{dist}}\limits_{H^1(\mathbb{R}^d)}\left(u_n, \mathcal{M}_a\right)=\varepsilon_0>0$.
\end{proof}
\begin{lemma}\label{L3.9}
There exists a $\delta_0>0$ such that, if $\varphi \in H^1(\mathbb{R}^d)$ satisfies $\mathop{\operatorname{dist}}\limits_{H^1(\mathbb{R}^d)}\left(\varphi, \mathcal{M}_a\right)<\delta_0$ the corresponding solution to \eqref{eq3.1} satisfies $T_{\varphi}^{\max }=\infty$.
\end{lemma}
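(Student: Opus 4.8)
The plan is to argue by contradiction and to bootstrap the \emph{uniform} local existence time supplied by Lemma \ref{L3.7} using the a priori confinement furnished by Lemma \ref{L3.8}. Since $\mathcal{M}_a$ is compact up to translation (Theorem \ref{t1.1}), I would first invoke Lemma \ref{L3.7} with $\mathcal{K}=\mathcal{M}_a$ to produce two constants $\varepsilon_0>0$ and $T_0>0$ such that \emph{every} datum $\psi\in H^1(\mathbb{R}^d)$ with $\operatorname{dist}_{H^1(\mathbb{R}^d)}(\psi,\mathcal{M}_a)<\varepsilon_0$ generates a solution of \eqref{eq3.1} on the whole interval $[0,T_0]$. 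The decisive feature is that $T_0$ depends only on the $\varepsilon_0$-tube around $\mathcal{M}_a$ and not on the particular base point of $\mathcal{M}_a$; this uniformity is exactly what compactness-up-to-translation buys us, and it is the mechanism that will let us extend the solution by a fixed step size indefinitely.

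Next I would fix $\varepsilon=\varepsilon_0/2$ in a uniform form of Lemma \ref{L3.8}, obtaining $\delta_0>0$ with the property that whenever $\operatorname{dist}_{H^1(\mathbb{R}^d)}(\varphi,\mathcal{M}_a)<\delta_0$ the corresponding solution satisfies
$$
\sup_{t\in[0,T_\varphi^{\max})}\operatorname{dist}_{H^1(\mathbb{R}^d)}(u_\varphi(t),\mathcal{M}_a)<\tfrac{\varepsilon_0}{2}<\varepsilon_0 .
$$
I would justify this uniform form by rerunning the contradiction argument of Lemma \ref{L3.8} verbatim: if $\operatorname{dist}_{H^1}(\varphi_n,\mathcal{M}_a)\to0$, pick $v_n\in\mathcal{M}_a$ with $\|\varphi_n-v_n\|_{H^1}\to0$; since $\|v_n\|_2^2=a$ and $I(v_n)=m(a)$, continuity of $\|\cdot\|_2$ and $I$ still force $\|\varphi_n\|_2^2\to a$ and $I(\varphi_n)\to m(a)$, so the conservation laws of Lemma \ref{L3.4} keep $u_{\varphi_n}$ inside $\mathbf{B}_{\rho_0}$ and Lemma \ref{L1.8} closes the argument exactly as before.

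With $\delta_0$ so chosen, suppose toward a contradiction that $\varphi$ satisfies $\operatorname{dist}_{H^1}(\varphi,\mathcal{M}_a)<\delta_0$ yet $T_\varphi^{\max}<\infty$. Because the equation \eqref{eq3.1} is autonomous in $t$, I would pick any $t_\ast\in(T_\varphi^{\max}-T_0,\,T_\varphi^{\max})$ and restart the flow from the datum $u_\varphi(t_\ast)$. By the previous step $\operatorname{dist}_{H^1}(u_\varphi(t_\ast),\mathcal{M}_a)<\varepsilon_0$, so Lemma \ref{L3.7} applied to $u_\varphi(t_\ast)$ yields a solution on a time interval of length $T_0$, i.e. on $[t_\ast,t_\ast+T_0]$. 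By the uniqueness part of Lemma \ref{L3.4} this solution coincides with $u_\varphi$ on the overlap and hence extends $u_\varphi$ beyond time $t_\ast+T_0>T_\varphi^{\max}$, contradicting maximality. Therefore $T_\varphi^{\max}=\infty$.

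The hard part is entirely contained in the first step: one must be sure that a \emph{single} existence time $T_0$ serves all data in the $\varepsilon_0$-tube around $\mathcal{M}_a$, and this is precisely what Lemma \ref{L3.7} delivers (resting in turn on Lemma \ref{L3.6} and the Strichartz estimates of Proposition \ref{P3.1}). Once that uniform time is in hand, the orbital confinement of Lemma \ref{L3.8} and the standard continuation principle make global existence only a short step; the remaining care is merely to verify that $t_\ast$ can be taken within $T_0$ of $T_\varphi^{\max}$ so that the extension genuinely overshoots the alleged maximal time.
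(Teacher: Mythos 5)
Your proposal follows essentially the same route as the paper's own proof: invoke Lemma \ref{L3.7} with $\mathcal{K}=\mathcal{M}_a$ to obtain a uniform tube radius $\varepsilon_0$ and uniform existence time $T_0$, use Lemma \ref{L3.8} to confine the trajectory inside that tube for as long as it exists, and then restart the flow near $T_{\varphi}^{\max}$ to contradict its finiteness. If anything, your write-up is slightly more careful than the paper's, since you explicitly state and justify the version of Lemma \ref{L3.8} that is uniform in the base point $v\in\mathcal{M}_a$ (which the paper uses implicitly) and you spell out the gluing-by-uniqueness step in the continuation argument.
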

\begin{proof}
We make use of Lemma \ref{L3.7} where we choose $\mathcal{K}=\mathcal{M}_a$. We can choose a $\delta_0>0$ such that Lemma \ref{L3.8} holds for $\varepsilon=\varepsilon_0$ where $\varepsilon_0>0$ is given in Lemma \ref{L3.7}. Then Lemma \ref{L3.8} guarantees that the solution $u_{\varphi}(t)$ where $\mathop{\operatorname{dist}}\limits_{H^1(\mathbb{R}^d)}\left(u_{\varphi}(t), \mathcal{M}_a\right)<\delta_0$ satisfies $\mathop{\operatorname{dist}}\limits_{H^1(\mathbb{R}^d)}\left(u_{\varphi}(t), \mathcal{M}_a\right)<\varepsilon_0$ up to the maximum time of existence $T_{\varphi}^{\text {max }} \geq T_0$. Since, at any time in $\left(0, T_{\varphi}^{\max }\right)$ we can apply again Lemma \ref{L3.7} that guarantees an uniform additional time of existence $T_0>0$, this contradicts the definition of $T_{\varphi}^{\max }$ if $T_{\varphi}^{\max }<\infty$.
\end{proof}

\noindent\textbf{Proof of Theorem \ref{t1.2}.} The proof is a direct consequence of Lemma \ref{L3.8} and Lemma \ref{L3.9}.

\end{document}